\renewcommand{\geq}{\geqslant}
\renewcommand{\leq}{\leqslant}
\theoremstyle{plain}
\newtheorem{theorem}[subsection]{Theorem}
\newtheorem{assumption}[subsection]{Assumption}
\newtheorem{definition}[subsection]{Definition}
\newtheorem{lemma}[subsection]{Lemma}
\newtheorem{conjecture}[subsection]{Conjecture}
\newtheorem{corollary}[subsection]{Corollary}
\newtheorem{proposition}[subsection]{Proposition}
\newtheorem{claim}[subsection]{Claim}
\theoremstyle{remark}
\newtheorem{remark}[subsection]{Remark}
\newcommand{\CC}{\ensuremath{\mathbb{C}}}
\newcommand{\PP}{\ensuremath{\mathbb{P}}}
\newcommand{\QQ}{\ensuremath{\mathbb{Q}}} 
\newcommand{\RR}{\ensuremath{\mathbb{R}}} 
\newcommand{\ZZ}{\ensuremath{\mathbb{Z}}}
\DeclareMathOperator{\GL}{GL}
\DeclareMathOperator{\pr}{pr}
\DeclareMathOperator{\Spec}{Spec}
\DeclareMathOperator{\End}{End}
\DeclareMathOperator{\Gal}{Gal}
\DeclareMathOperator{\MT}{MT}
\DeclareMathOperator{\CSpin}{CSpin}
\DeclareMathOperator{\Spin}{Spin}
\DeclareMathOperator{\SO}{SO}
\DeclareMathOperator{\Oo}{O}
\DeclareMathOperator{\Gmot}{G_{\mathrm{mot}}}
\DeclareMathOperator{\et}{\mathrm{\acute{e}t}}
\DeclareMathOperator{\NS}{\mathrm{NS}}
 \title{Galois representations on the cohomology of hyper-K\"{a}hler varieties}
 \author{Salvatore Floccari}
 \address{Institute of Algebraic Geometry, Leibniz University Hannover, Germany}
 \email{floccari@math.uni-hannover.de}
\begin{document}

 	\keywords{Hyper-K\"ahler varieties, motives, Hodge theory, Galois representations}
 	\subjclass{14C30, 14F20, 14J20, 14J32}
 	
 	\maketitle
 	
 	\begin{abstract}
 	We show that the Andr\'{e} motive of a hyper-K\"{a}hler variety $X$ over a field $K \subset \CC$ with $b_2(X)>6$ is governed by its component in degree $2$. More precisely, we prove that if $X_1$ and $X_2$ are deformation equivalent hyper-K\"{a}hler varieties with $b_2(X_i)>6$ and if there exists a Hodge isometry $f\colon H^2(X_1,\QQ)\to H^2(X_2,\QQ)$, then the Andr\'e motives of $X_1$ and~$X_2$ are isomorphic after a finite extension of $K$, up to an additional technical assumption in presence of non-trivial odd cohomology. As a consequence, the Galois representations on the \'{e}tale cohomology of $X_1$ and~$X_2$ are isomorphic as well. 
 	We prove a similar result for varieties over a finite field which can be lifted to hyper-K\"{a}hler varieties for which the Mumford--Tate conjecture is true.
 	\end{abstract}

 	\section{Introduction}

A guiding principle in the study of hyper-K\"{a}hler manifolds is that many of their geometric properties are governed by their cohomology in degree $2$. Perhaps the most spectacular illustration of this principle is the global Torelli theorem due to Huybrechts, Markman and Verbitsky, which precisely explains to what extent the birational class of a hyper-K\"{a}hler manifold $X$ can be recovered 
from the integral Hodge structure on the lattice~$H^2(X,\ZZ)$. 

As another example, it is known that the total Hodge structure on the rational singular cohomology $H^*_X\coloneqq H^*(X,\QQ)$ is determined by the Hodge structure on $H^2_X$. This fact is a consequence of the properties of the Looijenga-Lunts-Verbitsky Lie algebra that was introduced in \cite{looijenga1997lie} and \cite{verbitsky1996cohomology}; a complete proof of this result has been given by Soldatenkov in \cite{solda19}.

Let now $K\subset \CC$ be a subfield which is finitely generated over $\QQ$ and let $X$ be a hyper-K\"{a}hler variety over $K$. We fix a prime number $\ell$ and consider the \'{e}tale cohomology groups $H^*_{X,\ell}\coloneqq H^*_{\et}(X_{\bar{K}},\QQ_{\ell})$ of $X$. It is then natural to ask whether the Galois representation on the $\ell$-adic cohomology $H^*_{X,\ell}$ of $X$ is determined by its restriction to $H^2_{X,\ell}$. Going even further, we may ask the analogous question at the level of homological or Chow motives - however, in this setting the existence of K\"{u}nneth components is not known in general.
To circumvent this difficulty, we will work with Andr\'{e}'s category of motives \cite{andre1996Motives}.

We prove that, up to a finite field extension of the base field, the total Andr\'{e} motive $\mathcal{H}^*(X)=\bigoplus_j \mathcal{H}^{j}(X)$ of a hyper-K\"{a}hler variety $X$ is governed by its component in degree $2$. In what follows we say that two hyper-K\"{a}hler varieties $X_1$ and~$X_2$ over a field $K\subset \CC$ are deformation equivalent if the complex varieties $X_{1,\CC}$ and $X_{2,\CC}$ are deformation equivalent in the analytic sense. We let $H^*_{X_i}\coloneqq \bigoplus_j H^j(X_{i, \CC},\QQ)$.

\begin{theorem}[= Theorem \ref{thm:mot1}]\label{thm:mot}
	Let $X_1,X_2$ be deformation equivalent hyper-K\"{a}hler varieties with $b_2(X_i)>6$ over a field $K\subset \CC$. If $X_i$ has non-trivial cohomology in odd degree, assume that, for $i=1,2$, the motive $\mathcal{H}^1_{A_i}$ belongs to the tannakian category generated by $\mathcal{H}_{X_i}^*$, where $A_i$ is the Kuga-Satake variety on $H^2(X_i)$.
	Assume that there exists a Hodge isometry $f\colon H^2_{X_1}\to H^2_{X_2}$. 
	 Then, there exist a finite field extension $K'/K$ and an isomorphism of graded algebras $F\colon H^*_{X_1}\to H^*_{X_2}$ which is the realization of an isomorphism of Andr\'{e} motives $\mathcal{H}^*_{X_{1, K'}}\to \mathcal{H}^*_{X_{2, K'}}$ over~$K'$.
\end{theorem}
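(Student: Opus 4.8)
The plan is to produce, over $\CC$, an isomorphism $F$ of graded algebras and of Hodge structures extending $f$, to recognise it as the Betti realization of a morphism of Andr\'e motives over $\CC$, and then to descend. The descent is formal: since $X_1$ and $X_2$ are defined over $K$, the motives $\mathcal{H}^*(X_i)$ are objects of the category of Andr\'e motives over $K$, and because motivated classes do not change under extensions of algebraically closed fields while $\Hom_{\bar K}=\varinjlim_{K'/K \text{ finite}}\Hom_{K'}$, any morphism of Andr\'e motives over $\CC$ between motives defined over $K$ is defined over some finite extension $K'/K$; applying this to $F$ and to its inverse gives the statement over $K'$, and the $\ell$-adic realization then yields the isomorphism of $\Gal(\bar K/K')$-representations announced in the introduction. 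So it suffices to work over $\CC$.

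To build $F$ I would extend $f$ to a Hodge isometry $\tilde f\colon\tilde H^2_{X_1}\to\tilde H^2_{X_2}$ of the Mukai completions $\tilde H^2_{X_i}=H^2_{X_i}\oplus U$, acting as the identity on the hyperbolic plane $U=\QQ\oplus\QQ(-1)$. By Verbitsky's description of the Looijenga--Lunts--Verbitsky Lie algebra, $\tilde f$ induces an isomorphism $\mathfrak{g}(X_1)\cong\mathfrak{g}(X_2)$ of the LLV algebras $\mathfrak{g}(X_i)\cong\mathfrak{so}(\tilde H^2_{X_i})$, and since $X_1$ and $X_2$ are deformation equivalent, the decomposition of $H^*_{X_i}$ into isotypic $\mathfrak{g}(X_i)$-components, the multiplicities, and the combinatorial type of the cup product coincide for the two varieties. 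By Soldatenkov's theorem the total Hodge structure together with its ring structure is determined by $H^2_{X_i}$ and the deformation type, so transporting along $\tilde f$ produces an isomorphism of graded $\QQ$-algebras and of rational Hodge structures $F\colon H^*_{X_1}\to H^*_{X_2}$ restricting to $f$ in degree $2$.

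The core of the argument is then to show that $F$ is motivated. The K\"unneth projectors and the Lefschetz $\mathfrak{sl}_2$-operators are motivated by Andr\'e's theorem, and --- an input I would need to verify, or to take from the preceding sections --- the entire LLV algebra $\mathfrak{g}(X_i)$ acts on the Andr\'e motive $\mathcal{H}^*(X_{i,\CC})$ by motivated endomorphisms; hence the isotypic decomposition is one of Andr\'e motives, $\mathcal{H}^{\mathrm{even}}(X_i)$ lies in the tannakian subcategory generated by $\mathcal{H}^2_{X_i}$, and $\mathcal{H}^{\mathrm{odd}}(X_i)$ in the one generated by $\mathcal{H}^2_{X_i}$ and $\mathcal{H}^1_{A_i}$ --- it is precisely here that the hypothesis that $\mathcal{H}^1_{A_i}$ belongs to the category generated by $\mathcal{H}^*_{X_i}$ is used, to identify the half-spin isotypic summands of $\mathcal{H}^{\mathrm{odd}}(X_i)$ with subobjects of tensor powers of $\mathcal{H}^1_{A_i}$ through morphisms of motives. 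That $f$ itself is motivated follows from the fact that the Kuga--Satake correspondence of $H^2_{X_i}$ is motivated: the isometry $f$ induces, via the Clifford algebra, an isogeny $\phi\colon A_1\to A_2$ over $\CC$, which is motivated because $\Hom$'s between motives of abelian varieties are (Andr\'e), and $f$ is recovered from $\ad(\phi)$ through the motivated (Tate-twisted) embedding of $\mathcal{H}^2_{X_i}$ into $\End(\mathcal{H}^1_{A_i})$. Since $F$ is assembled from $\tilde f$ --- equivalently from $f$ and $\phi$ --- by tensor constructions, motivated projectors and Tate twists, it is the Betti realization of an isomorphism of Andr\'e motives over $\CC$, as required.

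The step I expect to be the main obstacle is this last one for the odd cohomology: one has to upgrade the purely Hodge-theoretic transport of the half-spin isotypic pieces to an identification by morphisms of Andr\'e motives that is compatible with the Kuga--Satake isogeny $\phi$, and this is exactly what the assumption on $\mathcal{H}^1_{A_i}$ secures; one must also handle the multiplicity spaces --- showing that they are, or are transported as, direct sums of Tate motives --- and keep careful track of Tate twists and of the grading. By contrast the even part is comparatively formal once the LLV operators are known to be motivated, and the descent from $\CC$ to $K'$ is immediate.
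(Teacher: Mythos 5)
Your overall architecture (construct $F$ over $\CC$ as a graded algebra and Hodge isomorphism extending a multiple of $f$, show it is motivated, descend to a finite extension $K'$) matches the paper, and several ingredients are right: the descent step, the use of the $\CSpin$/LLV action to extend $f$ to the full cohomology, the role of Assumption on $\mathcal{H}^1_{A_i}$ for the odd part, and the motivated Kuga--Satake correspondence. But there is a genuine gap at what you dismiss as the ``comparatively formal'' even part. Your argument needs $\mathcal{H}^{\mathrm{even}}_{X_i}$ to lie in the tannakian category generated by $\mathcal{H}^2_{X_i}$ (equivalently, the multiplicity spaces of the isotypic $\mathfrak{g}(X_i)$-decomposition to be sums of Tate motives). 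This is \emph{not} known: it is equivalent to the triviality of the defect group $P(X_i)=\ker\bigl(\Gmot(\mathcal{H}^*_{X_i})\to\Gmot(\mathcal{H}^2_{X_i})\bigr)$, i.e.\ to the conjecture $\Gmot(\mathcal{H}^*_{X_i})=\MT(H^*_{X_i})$, which is open and is a consequence of the Hodge conjecture. Knowing that the LLV operators and the K\"unneth projectors are motivated only gives you a motivated isotypic decomposition; it says nothing about the motives carried by the multiplicity spaces, and this is exactly where your plan stalls. Your hedge that the multiplicity spaces ``are, or are transported as, direct sums of Tate motives'' is the crux of the whole theorem, not a loose end.

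The paper's resolution is to avoid proving triviality of $P(X_i)$ altogether. One connects $X_1$ to $X_2$ by a chain of smooth proper families $\mathfrak{X}^i\to S_i$ and defines $\Psi$ by parallel transport; the isomorphism to be shown motivated is $F=\Psi\circ R(g)$ with $g\in\CSpin(H^2_{X_1})(\QQ)$ lifting $\psi^{-1}\circ f$. Using the splitting $\Gmot(\mathcal{H}^*_{X_i})=P(X_i)\times\MT(H^*_{X_i})$, motivicity of $F$ reduces to two compatibilities: the $\MT$-factor is handled essentially as you propose (via abelian motives and the fact that $F$ is a Hodge isomorphism, by Proposition \ref{prop:hodge}), but the $P$-factor requires showing that parallel transport intertwines the projections $\pr_{X_1},\pr_{X_2}\colon\mathcal{G}_{\mathrm{mot}}(\mathrm{AM}_{\CC})\to P(X_i)$ --- i.e.\ the deformation invariance of the defect group \emph{as a quotient of the universal motivic Galois group}, Proposition \ref{prop:constantdefect}. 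That step rests on Andr\'e's deformation principle (Theorem \ref{thm:defPrin}) applied to the $\MT$-invariants inside tensor constructions on $\mathcal{H}^*_{\mathfrak{X}/S}$, and it has no counterpart in your proposal: a purely representation-theoretic transport along $\tilde f$, not realized by parallel transport in algebraic families, gives you no control over the motives of the multiplicity spaces. You would need to either prove the defect groups trivial (open) or import this deformation-invariance argument.
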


The map $F$ obtained in the theorem is in particular an isomorphism of Hodge structures. The assumption on the second Betti number is needed to show that deformation equivalent hyper-K\"{a}hler varieties can be connected via a chain of polarized deformations; see Theorem \ref{thm:connectingHK} for the precise statement.
All known hyper-K\"{a}hler varieties have $b_2\geq 7$.

In presence of non-trivial odd cohomology we need an extra assumption to control the odd part of the motive. 
The Kuga-Satake variety~$A_i$ is an abelian variety closely related to $H^2(X_i)$ (\cite{deligne1971conjecture}); we expect that its motive belongs to the tannakian category generated by $\mathcal{H}_{X_i}^*$. By \cite{FFZ} this happens for the known hyper-K\"{a}hler varieties with non-trivial odd cohomology, which are those of generalized Kummer deformation type. We obtain the following consequence of Theorem~\ref{thm:mot}.

\begin{corollary}[{=Corollary \ref{cor:unconditionalGalRep1}}]\label{cor:unconditionalGalRep}
	Let $K\subset \CC$ be a subfield which is finitely generated over $\QQ$ and let $X_1,X_2$ be deformation equivalent hyper-K\"{a}hler varieties over~$K$ such that $b_2(X_i)>6$. If $X_i$ has non-trivial cohomology in odd degree, assume further that, for $i=1,2$, the motive $\mathcal{H}^1_{A_i}$ belongs to the tannakian category generated by $\mathcal{H}_{X_i}^*$, where $A_i$ is the Kuga-Satake variety on $H^2(X_i)$. Assume that there exists a $\Gal(\bar{K}/K)$-equivariant isometry $f\colon H^2_{X_1, \ell}\to H^2_{X_2,\ell}$. 
	Then, there exist a finite field extension $K'/K$ and a $\Gal(\bar{K}/K')$-equivariant isomorphism of graded algebras $F\colon H^*_{X_1, \ell}\to H^*_{X_2,\ell}$.
\end{corollary}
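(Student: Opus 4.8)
The plan is to deduce Corollary~\ref{cor:unconditionalGalRep} from Theorem~\ref{thm:mot}: it suffices to upgrade the given $\Gal(\bar K/K)$-equivariant isometry $f$ to a Hodge isometry $H^2_{X_1}\to H^2_{X_2}$. Indeed, once such a Hodge isometry is available, Theorem~\ref{thm:mot} --- whose hypotheses on $b_2$ and on the odd cohomology are precisely those imposed in the corollary --- produces a finite extension $K'/K$ together with an isomorphism of Andr\'e motives $\mathcal{H}^*_{X_{1,K'}}\to\mathcal{H}^*_{X_{2,K'}}$, and its $\ell$-adic realization is a $\Gal(\bar K/K')$-equivariant isomorphism of graded algebras $H^*_{X_1,\ell}\to H^*_{X_2,\ell}$, which is the desired $F$. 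So the whole content is the passage, in degree $2$, from an $\ell$-adic Galois isometry to a Hodge isometry; for this I would use the Kuga--Satake construction together with Faltings' isogeny theorem.

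After replacing $K$ by a finite extension, I may assume that the Kuga--Satake abelian varieties $A_1,A_2$ are defined over $K$, that the Kuga--Satake correspondences are $\Gal(\bar K/K)$-equivariant, and that $H^1_{A_i,\ell}$ is, Galois-equivariantly, the even Clifford algebra $C^+(H^2_{X_i,\ell})$ regarded as a module over itself, with the action of $C^+(H^2_{X_i})$ on $A_i$ by endomorphisms defined over $K$. The isometry $f$ then induces a $\Gal(\bar K/K)$-equivariant isomorphism of Clifford algebras $C^+(H^2_{X_1,\ell})\xrightarrow{\sim}C^+(H^2_{X_2,\ell})$, hence a Galois-equivariant, Clifford-linear isomorphism $H^1_{A_1,\ell}\xrightarrow{\sim}H^1_{A_2,\ell}$. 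Faltings' theorem --- which applies over the finitely generated field $K$ (if one prefers, after spreading out and specialising to a number field) --- identifies $\Hom(A_1,A_2)\otimes\QQ_\ell$ with $\Hom_{\Gal(\bar K/K)}(H^1_{A_1,\ell},H^1_{A_2,\ell})$ compatibly with the $C^+$-actions. Taking Clifford-linear parts --- here one compares the $\QQ_\ell$-isomorphism induced by $f$ with a fixed isomorphism of the $\QQ$-algebras $C^+(H^2_{X_i})$, which exists by deformation invariance of the Beauville--Bogomolov--Fujiki form --- one obtains, after a further finite extension, a Clifford-linear isogeny $A_1\to A_2$. Consequently $H^1_{A_1}$ and $H^1_{A_2}$ are isomorphic as $\QQ$-Hodge structures compatibly with the Clifford module structure; and since the Kuga--Satake correspondence reconstructs $H^2_{X_i}$ together with its quadratic form from the Clifford datum on $H^1_{A_i}$ (the form being recovered from $v\mapsto v^2$), this yields a Hodge isometry $H^2_{X_1}\to H^2_{X_2}$, which is the input required by Theorem~\ref{thm:mot}.

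The formal reduction to Theorem~\ref{thm:mot} is immediate; the real work is the middle paragraph. Making the Kuga--Satake construction genuinely $\Gal(\bar K/K)$-equivariant involves the standard subtleties --- replacing $\mathrm{CSpin}$-data by $\mathrm{Spin}$-data and realizing $A_i$ together with the Clifford action over a finite extension --- and keeping track of Clifford-linearity through Faltings' isomorphism requires a Skolem--Noether comparison whose obstruction is a class that becomes trivial after a finite extension of $K$. One must also make sure that Faltings' full faithfulness holds over the finitely generated field $K$; this is standard. Granting these, the corollary follows.
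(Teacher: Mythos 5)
Your reduction of the corollary to Theorem \ref{thm:mot} hinges on upgrading the Galois-equivariant $\QQ_{\ell}$-isometry $f$ to a Hodge isometry $H^2_{X_1}\to H^2_{X_2}$ defined over $\QQ$, and this step is not merely delicate --- it fails in general. Even granting the Mumford--Tate conjecture in degree $2$, the hypothesis only shows that the $\QQ$-variety of Mumford--Tate-equivariant isometries $H^2_{X_1}\to H^2_{X_2}$ (a torsor under the group of Hodge isometries of $H^2_{X_1}$) has a $\QQ_{\ell}$-point; such a torsor need not have a $\QQ$-point. Concretely, a Hodge isometry must restrict to isometries of the N\'eron--Severi and transcendental parts over $\QQ$, whereas the hypothesis only provides such isometries over the single completion $\QQ_{\ell}$, and two quadratic forms over $\QQ$ can be isometric over $\QQ_{\ell}$ without being isometric over $\QQ$. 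This is exactly why the paper does not argue as you do: it uses Andr\'e's results (the motive $\mathcal{H}^2_{X_i}$ is abelian and satisfies the Mumford--Tate conjecture) to conclude that $f$ itself is, after a finite extension of $K$, the realization of an isomorphism of motives \emph{with $\QQ_{\ell}$-coefficients}, and then reruns the proof of Theorem \ref{thm:mot1} verbatim with $\QQ_{\ell}$-coefficients (Hilbert 90 for $\CSpin\to\SO$ over $\QQ_{\ell}$, the $R$-action, the defect-group argument), never descending to $\QQ$. The ``formal reduction'' you call immediate is thus replaced in the paper by a rerun of the whole argument in a different coefficient field.

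Independently of this, the last step of your middle paragraph has its own gap: a Clifford-linear isogeny $A_1\to A_2$ does not induce a Hodge isometry of the $H^2$'s. A $C^+$-linear Hodge isomorphism $H^1_{A_1}\to H^1_{A_2}$ amounts to left multiplication by a unit $c$ of the even Clifford algebra, and conjugation by $c$ preserves the subspace $V\subset C(V)$ only when $c$ lies in the Clifford group $\CSpin(V)$, not for an arbitrary unit. Equivalently, the $C^+$-module $H^1_{A}$ does not determine $V$ --- this is precisely why $\CSpin$ Shimura varieties are not of PEL type --- and one must additionally require the isogeny to respect the Kuga--Satake correspondence (the spin tensors), which the $\QQ$-rational point you extract from Faltings' theorem by density need not do, even $\ell$-adically. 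So even the $\QQ_{\ell}$-local version of your final step requires a further argument; the robust route is the paper's, staying within the category of abelian motives with $\QQ_{\ell}$-coefficients.
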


The corollary is the $\ell$-adic counterpart of the Hodge theoretic result from \cite{solda19}. This is not surprising: if the Mumford--Tate conjecture was true, the corollary would be a direct consequence of its analogue in Hodge theory. Even though the Mumford--Tate conjecture is not known for arbitrary hyper-K\"{a}hler varieties, in \cite{FFZ} we proved it for all hyper-K\"{a}hler varieties of known deformation type. 
When the conjecture holds, we obtain a more precise result on the Galois representations $H^*_{X_{i,\ell}}$. Let us say that two hyper-K\"{a}hler varieties $X_1$ and $X_2$ are $H_{\ell}^*$-equivalent if there exists an isomorphism of graded algebras $H^*_{X_1,\ell}\cong H^*_{X_2,\ell}$ which restricts to an isometry in degree $2$ with respect to the Beauville-Bogomolov pairings. Deformation equivalent $X_1$ and $X_2$ are $H_{\ell}^*$-equivalent as well, since in this case $X_{1,\CC}$ and $X_{2,\CC}$ are homeomorphic. 

\begin{theorem}[{= Theorem \ref{thm:main}}]\label{thm:GalRepHK}
	Let $K_1, K_2$ be subfields of $\CC$, finitely generated over $\QQ$, and let $X_i$ be a hyper-K\"{a}hler variety over $K_i$, for $i=1,2$. Assume that~$X_1$ and $X_2$ are $H_{\ell}^*$-equivalent, and that the Mumford--Tate conjecture holds for both of them. Let $\Gamma\subset \Gal(\bar{K}_1/K_1)$ be a subgroup, let $\epsilon\colon \Gamma\to \Gal(\bar{K}_2/K_2)$ be a homomorphism; $\Gamma$ acts on $H^*_{X_1,\ell}$ via its inclusion into $\Gal(\bar{K}_1/K_1)$ and on $H^*_{X_2,\ell}$ via $\epsilon$. Assume that there exists a $\Gamma$-equivariant isometry $f\colon H^2_{X_1, \ell} \to H^2_{X_2, \ell}$.
	Then, there exist a subgroup $\Gamma'\subset \Gamma$ of finite index and an isomorphism $F\colon H^*_{X_1, \ell}\to H^*_{X_2, \ell}$ of graded algebras which is $\Gamma'$-equivariant.
\end{theorem}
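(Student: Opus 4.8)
The plan is to develop a Galois-equivariant, $\ell$-adic version of the Looijenga--Lunts--Verbitsky (LLV) machinery underlying \cite{solda19}, and to use the Mumford--Tate conjecture to guarantee that this machinery controls the whole Galois representation. Recall that for each $X_i$, writing $2n=\dim_{\CC}X_i$, the total $\ell$-adic cohomology $H^*_{X_i,\ell}$ is simultaneously a graded $\QQ_{\ell}$-algebra under cup product and a module over the LLV Lie algebra $\mathfrak{g}_{X_i}\cong\mathfrak{so}(\tilde H_{X_i,\ell})$, where $\tilde H_{X_i,\ell}=H^2_{X_i,\ell}(1)\oplus\QQ_{\ell}^{\oplus 2}$ carries the Beauville--Bogomolov form extended by a hyperbolic plane; $\mathfrak{g}_{X_i}$ is generated by the grading operator, the cup-product operators $L_{\alpha}$ for $\alpha\in H^2_{X_i,\ell}$, and their $\mathfrak{sl}_2$-partners. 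The group $\Gal(\bar K_i/K_i)$ acts on $H^*_{X_i,\ell}$ by graded-algebra automorphisms, normalises $\mathfrak{g}_{X_i}$, and acts on it through its natural action on $\tilde H_{X_i,\ell}$. By \cite{solda19}, \cite{FFZ} the total Hodge structure of $X_{i,\CC}$ — and, in the presence of odd cohomology, also the relevant Kuga--Satake variety — is built functorially from $H^2(X_{i,\CC})$, so under the Mumford--Tate conjecture the same holds $\ell$-adically: after replacing $\Gamma$ by a finite-index subgroup, the $\Gamma$-action on $H^*_{X_i,\ell}$ factors through the action of a reductive group $L_i\subseteq\mathbb{G}_m\times\SO(\tilde H_{X_i,\ell})$, the $\SO(\tilde H_{X_i,\ell})$-component being the LLV action and the action on $\tilde H_{X_i,\ell}$ being the given one (the $\mathbb{G}_m$ accounting for Tate twists, which depend only on cohomological degree).

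I would then transport everything along $f$. Extending $f$ by an isometry of the hyperbolic planes — automatically $\Gamma$-equivariant, as $\Gamma$ acts trivially on them — produces a $\Gamma$-equivariant isometry $\tilde f\colon\tilde H_{X_1,\ell}\to\tilde H_{X_2,\ell}$ and hence a $\Gamma$-equivariant isomorphism $\mathfrak{g}_{X_1}\cong\mathfrak{g}_{X_2}$. On the other hand the $H^*_{\ell}$-equivalence supplies a graded-algebra isomorphism $\phi\colon H^*_{X_1,\ell}\to H^*_{X_2,\ell}$, not a priori Galois-equivariant, restricting to an isometry $g_0$ in degree $2$; since $\phi$ respects cup products it conjugates each $L_{\alpha}$ to $L_{\phi(\alpha)}$ and, respecting the grading, the grading operator, so by uniqueness of $\mathfrak{sl}_2$-partners it conjugates $\mathfrak{g}_{X_1}$ onto $\mathfrak{g}_{X_2}$ compatibly with the extension of $g_0$. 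Correcting $\phi$ by the LLV action of a suitable element of the orthogonal group of $H^2_{X_1,\ell}$ — which, coming from the degree-$0$ part of the LLV algebra, acts by graded-algebra automorphisms — and passing if necessary to a finite-index subgroup of $\Gamma$, I obtain a graded-algebra isomorphism $\psi\colon H^*_{X_1,\ell}\to H^*_{X_2,\ell}$ that intertwines the $\SO(\tilde H_{X_i,\ell})$-actions via $\tilde f$ and respects the Tate grading.

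Now I would measure the failure of $\psi$ to be $\Gamma$-equivariant. For $g\in\Gamma$ put $\psi_g:=\epsilon(g)\circ\psi\circ g^{-1}$; this is again a graded-algebra isomorphism, and, because $\tilde f$ is $\Gamma$-equivariant and the $\Gamma$-action on each $H^*_{X_i,\ell}$ factors as in the first step, $\psi_g$ is again compatible with $\tilde f$ and the Tate grading. Hence $c_g:=\psi^{-1}\circ\psi_g$ lies in the group $A$ of graded-algebra automorphisms of $H^*_{X_2,\ell}$ commuting with the $\SO(\tilde H_{X_2,\ell})$-action, the map $g\mapsto c_g$ is a $1$-cocycle for the $\Gamma$-action on $A(\QQ_{\ell})$ via $\epsilon$ and conjugation, and its class in $H^1(\Gamma,A)$ is the obstruction to equivariance. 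The decisive point is that $A$ is \emph{finite}: such an automorphism, being a ring homomorphism that commutes with all $L_{\alpha}$, fixes $H^2_{X_2,\ell}$ and hence the whole Verbitsky subalgebra it generates, in particular $H^0_{X_2,\ell}$ and the top-degree part $H^{4n}_{X_2,\ell}$; combining this with Poincaré duality and compatibility with the $\SO(\tilde H_{X_2,\ell})$-isotypic decomposition — using the explicit description of the cohomology ring available for the relevant deformation types — one checks $A$ has no positive-dimensional part. As $A$ is finite the $\Gamma$-action on it factors through a finite quotient; replacing $\Gamma$ by the finite-index subgroup acting trivially on $A$, the map $g\mapsto c_g$ becomes a homomorphism into $A$, and $\Gamma':=\ker(g\mapsto c_g)$ then has finite index and makes $\psi$ $\Gamma'$-equivariant. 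Setting $F=\psi$ concludes the argument.

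I expect the main obstacle to be the first step: showing that the Mumford--Tate conjecture genuinely forces the Galois action on the \emph{whole} cohomology to factor through the LLV group together with Tate twists, with no extra $\ell$-adic symmetry on the multiplicity spaces of the non-Verbitsky summands. This amounts to transporting the Hodge-theoretic rigidity of \cite{solda19} to the $\ell$-adic setting via the conjecture and, in the presence of odd cohomology, to the fact — established in \cite{FFZ} precisely in the cases where the conjecture is known — that the Kuga--Satake abelian variety lies in the tannakian category generated by $H^*_{X_i,\ell}$. Secondary technicalities, all absorbed by passing to finite-index subgroups, are the bookkeeping of $\Oo$ versus $\SO$ (and of $\Spin$- and $\mathrm{Pin}$-covers) in the LLV action, and the exceptional case $b_2(X_i)=6$, where $\dim\tilde H_{X_i,\ell}=8$ and triality must be excluded using that the Galois action on $\mathfrak{g}_{X_i}$ is induced from an action on $\tilde H_{X_i,\ell}$ itself.
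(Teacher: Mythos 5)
Your overall strategy coincides with the paper's: use the Mumford--Tate conjecture together with Theorem \ref{thm:HodgeHK} to factor both Galois actions through the image of the LLV representation, correct the graded-algebra isomorphism coming from $H^*_\ell$-equivalence by a (C)Spin element lifting $\psi^{-1}\circ f$ (Hilbert 90, determinant adjustment), and then kill a discrepancy cocycle by passing to a finite-index subgroup of $\Gamma$. But your final step has a genuine gap. You place the cocycle $c_g$ in the group $A$ of graded-algebra automorphisms of $H^*_{X_2,\ell}$ commuting with the LLV action, and you assert that $A$ is finite, to be checked ``using the explicit description of the cohomology ring available for the relevant deformation types.'' This is not justified: the theorem is stated for arbitrary hyper-K\"ahler varieties satisfying the Mumford--Tate conjecture, not only for the known deformation types, so no such explicit description is available; and even granting it, the claim is essentially open. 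Indeed, the defect group $P(X_2)$ of \cite{FFZ} is a subgroup of exactly this centralizer $A$ (it acts by graded-algebra automorphisms commuting with $\mathfrak{g}(X_2)$), and its finiteness is not known --- its triviality is equivalent to ``all Hodge classes on $X_2$ are motivated.'' The a priori constraints you list (fixing the Verbitsky subalgebra, Poincar\'e duality, compatibility with the isotypic decomposition) still allow, say, an orthogonal group of a multiplicity space, which is positive-dimensional.

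The fix is already contained in your first step and is what the paper does: since both $\sigma_{X_2}\circ\epsilon$ and the transport of $\sigma_{X_1}$ under $\psi_*$ land in $R\bigl(\mathrm{G}_0(X_2)\bigr)(\QQ_\ell)$, and since their degree-$2$ components agree (the degree-$2$ part of $\psi$ being a scalar multiple of the $\Gamma$-equivariant $f$), the discrepancy lies not in all of $A$ but in the kernel $C$ of $R\bigl(\mathrm{G}_0(X_2)\bigr)\to R^{(2)}\bigl(\mathrm{G}_0(X_2)\bigr)$. By Verbitsky's computation that $-1\in\CSpin(H^2_{X_2})$ acts on $H^j$ as $(-1)^j$ (Remarks \ref{rem:minusone} and \ref{rem:isogenies}), $C$ is trivial or $\{\pm 1\}$, so the cocycle is a homomorphism $\Gamma\to\{\pm1\}$ whose kernel is the desired $\Gamma'$. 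With this replacement your argument closes, and none of your worries about triality or about finiteness of centralizers of multiplicity spaces arise.
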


This result leads to similar conclusions for hyper-K\"{a}hler varieties over finite fields. The study of such varieties is still in its early stages: besides $\mathrm{K}3$ surfaces, it is not clear how to define these objects (but a possible definition is proposed in \cite{fuLi2018}). Despite this, certain higher dimensional moduli spaces of sheaves on $\mathrm{K3}$ surfaces play a key role in Charles's proof of the Tate conjecture for $\mathrm{K}3$ surfaces \cite{charles2016birational} over finite fields; other examples of hyper-K\"{a}hler varieties over finite fields can be obtained from moduli spaces of sheaves on abelian surfaces~\cite{fuLi2018}. 
All these varieties can be lifted to some hyper-K\"{a}hler variety in characteristic $0$. We will therefore consider smooth and projective varieties over finite fields which can be lifted to a hyper-K\"{a}hler variety in characteristic $0$; see \S\ref{subsec:setting} for the precise meaning of this. The recent article \cite{yang2019irreducible} shows that this approach yields at least a good notion of varieties of $\mathrm{K3}^{[n]}$-type.

Let $k$ be a finite field and let $\ell$ be a prime number invertible in $k$. For a smooth and projective variety $Z$ over $k$, we let $H^*_{Z,\ell}\coloneqq H^*_{\et}(Z_{\bar{k}}, \QQ_{\ell})$; if $Z$ can be lifted to a hyper-K\"{a}hler variety in characteristic $0$ then the second cohomology group~$H^2_{Z,\ell}$ inherits a non-degenerate $\QQ_{\ell}$-valued symmetric bilinear pairing, see Remark \ref{rem:BBform}.

\begin{theorem}[{= Theorem \ref{thm:Galfinitefields}}]\label{thm:GalRep2}
Let $Z_1, Z_2$ be smooth projective varieties over~$k$ such that there exist $H_{\ell}^*$-equivalent hyper-K\"{a}hler varieties $X_1$, $X_2$ in characteristic $0$ which lift $Z_1$ and $Z_2$ respectively. Assume that the Mumford--Tate conjecture holds for both $X_1$ and $X_2$ and that there exists a $\Gal(\bar{k}/k)$-equivariant isometry $f\colon H^2_{Z_1, \ell} \to H^2_{Z_2,\ell}$. Then, there exist a finite field extension $k'$ of $k$ and a $\Gal(\bar{k}/k')$-equivariant isomorphism of graded algebras $F\colon H^*_{Z_1 ,\ell}\to H^*_{Z_2,\ell}$.
\end{theorem}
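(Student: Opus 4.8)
The strategy is to reduce Theorem~\ref{thm:GalRep2} to Theorem~\ref{thm:GalRepHK} by comparing the $\ell$-adic cohomology of each $Z_i$ with that of its characteristic-zero lift $X_i$. Fix, for $i=1,2$, a smooth proper model of $X_i$ over some mixed-characteristic discrete valuation ring with residue field a finite extension of $k$ whose special fibre is $Z_i$; enlarging $k$ at the outset (which is harmless, as it only replaces $k$ by a finite extension $k'$) we may assume the residue field is $k$ itself. Smooth proper base change in étale cohomology then furnishes, after choosing a geometric point, an isomorphism of graded $\QQ_\ell$-algebras $H^*_{Z_i,\ell}\cong H^*_{X_i,\ell}$, compatible with the Beauville--Bogomolov pairings in degree~$2$ (this is precisely what Remark~\ref{rem:BBform} records), and equivariant for the specialization map $\Gal(\bar k/k)\to \Gal(\bar K_i/K_i)$ relating the decomposition group to the absolute Galois group of the generic fibre — more precisely, there is a homomorphism from $\Gal(\bar k/k)$ (identified with the Galois group of the maximal unramified extension) to $\Gal(\bar K_i/K_i)$ through which the Galois action on $H^*_{Z_i,\ell}$ factors, matching the action on $H^*_{X_i,\ell}$.

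With these comparison isomorphisms in hand, set $\Gamma=\Gal(\bar k/k)$, acting on $H^*_{X_1,\ell}$ through the specialization homomorphism $\Gamma\to\Gal(\bar K_1/K_1)$, and let $\epsilon\colon\Gamma\to\Gal(\bar K_2/K_2)$ be the corresponding homomorphism for $X_2$. Transporting the given $\Gal(\bar k/k)$-equivariant isometry $f\colon H^2_{Z_1,\ell}\to H^2_{Z_2,\ell}$ through the degree-$2$ comparison isomorphisms yields a $\Gamma$-equivariant isometry $H^2_{X_1,\ell}\to H^2_{X_2,\ell}$ for these two actions. Since $X_1$ and $X_2$ are by hypothesis $H^*_\ell$-equivalent and the Mumford--Tate conjecture holds for both, Theorem~\ref{thm:GalRepHK} applies and produces a finite-index subgroup $\Gamma'\subset\Gamma$ together with a $\Gamma'$-equivariant isomorphism of graded algebras $H^*_{X_1,\ell}\to H^*_{X_2,\ell}$. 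A finite-index subgroup of $\Gal(\bar k/k)\cong\widehat{\ZZ}$ is of the form $\Gal(\bar k/k')$ for a finite extension $k'/k$; conjugating back through the comparison isomorphisms gives the desired $\Gal(\bar k/k')$-equivariant isomorphism of graded algebras $F\colon H^*_{Z_1,\ell}\to H^*_{Z_2,\ell}$.

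The only point requiring genuine care — and the step I expect to be the main obstacle — is checking that the comparison isomorphism $H^*_{Z_i,\ell}\cong H^*_{X_i,\ell}$ really does intertwine the $\Gal(\bar k/k)$-action with the $\Gal(\bar K_i/K_i)$-action via a well-defined homomorphism, and that the notion of lift set up in \S\ref{subsec:setting} makes this precise: one needs the model to be smooth and proper over a base whose function field embeds into $\bar K_i$ in a way compatible with a chosen place, so that the decomposition group surjects onto $\Gal(\bar k/k)$ and acts through it (up to inertia, which acts trivially by smooth proper base change). Once the framework of \S\ref{subsec:setting} is invoked, this is standard, but it is where all the content of the reduction lies; the rest is formal transport of structure through isomorphisms and the elementary description of open subgroups of $\widehat{\ZZ}$.
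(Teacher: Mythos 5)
Your proposal is essentially the paper's proof: reduce to Theorem~\ref{thm:main} via the smooth proper base change comparison $H^*_{Z_i,\ell}\cong H^*_{X_i,\ell}$ of \S\ref{subsec:setting} and Remark~\ref{rem:BBform}, transport $f$, and translate the finite-index subgroup back into a finite extension $k'/k$. The one point where your formulation needs adjustment is the choice of $\Gamma$: Theorem~\ref{thm:main} requires $\Gamma$ to be an actual subgroup of $\Gal(\bar{K}_1/K_1)$, and the natural specialization map goes from the decomposition group \emph{onto} $\Gal(\bar{k}/k)$, not the other way, so taking $\Gamma=\Gal(\bar{k}/k)\cong\widehat{\ZZ}$ would require a continuous section that you have not produced. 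The paper avoids this by choosing Frobenius lifts $\phi_i\in\Gal(\bar{K}_i/K_i)$ at the relevant primes and taking $\Gamma=\langle\phi_1^{a_1}\rangle\cong\ZZ$ (with $\epsilon(\phi_1^{a_1})=\phi_2^{a_2}$), whose finite-index subgroups are generated by powers $\mathrm{Fr}_k^m$ and hence correspond to finite extensions $k'/k$; with that substitution your argument is exactly the one in the text.
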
 

In particular $Z_{1, k'}$ and $Z_{2,k'}$ have the same zeta function. 
In the special case when $Z_1$ and $Z_2$ are moduli spaces of stable sheaves on $\mathrm{K}3$ surfaces over $k$ the above statement has already been proven by Frei in \cite{frei2018} via a different method, which uses Markman's results from \cite{markman2008}. 

The structure of this article is reversed with respect to the order of the introduction. Namely, after reviewing in \S\ref{sec:LLV} the construction of the Looijenga-Lunts-Verbitsky (LLV) Lie algebra, we use the properties of this Lie algebra to prove Theorems \ref{thm:GalRepHK} and \ref{thm:GalRep2} in~\S\ref{sec:GalRep}. 
We then prove Theorem \ref{thm:mot} and Corollary \ref{cor:unconditionalGalRep} in~\S\ref{sec:motivic}. These last results rely fundamentally on the \textit{defect groups} of hyper-K\"{a}hler varieties introduced in \cite{FFZ} with Lie Fu and Ziyu Zhang. The proof of Theorem~\ref{thm:mot} also uses the fact that deformation equivalent hyper-K\"{a}hler varieties can be connected using polarized deformations, which is proven in the last section \S\ref{sec:ProjFamilies}.

 \subsection*{Notation and conventions}
 
 By a hyper-K\"{a}hler variety over a field $K \subset \CC$ we mean a smooth and projective variety $X$ over $K$ such that $X(\CC)$ is a complex hyper-K\"{a}hler manifold, i.e.\ it is simply connected and $H^0(X(\CC),\Omega^2)$ is spanned by the cohomology class of a nowhere degenerate holomorphic $2$-form. If $X_1,X_2$ are hyper-K\"{a}hler varieties over subfields $K_1,K_2\subset  \CC$ respectively, we say that $X_1$ and $X_2$ are deformation equivalent if the complex manifolds $X_1(\CC)$ and $X_2(\CC)$ are deformation equivalent (in the analytic sense). For a smooth and projective variety $X$ over a subfield $K\subset \CC$ we use the notation $H^j_X\coloneqq H^j(X(\CC), \QQ)$ and $H^j_{X,\ell}\coloneqq H^j_{\et}(X_{\bar{K}}, \QQ_{\ell})$, where $\bar{K}\subset \CC$ is the algebraic closure of $K$ in $\CC$ and $\ell$ is a prime number.

 \subsection*{Aknowledgements}
 
 I am most grateful to Ben Moonen and Arne Smeets for their help and encouragement. I wish to thank Lie Fu for many useful discussions around the topics of this paper. I also thank the referee for his/her careful review.

 \section{The LLV-Lie algebra}\label{sec:LLV}
 
 \subsection{} 
 In this section, we let $X$ be a complex hyper-K\"{a}hler variety of dimension~$2n$. We let $H^*_{X}\coloneqq \bigoplus_j H^{j}(X,\QQ)$; the second cohomology group $H^2_X$ is equipped with the Beauville-Bogomolov pairing $q$, a non-degenerate symmetric bilinear form, see \cite[Th\'eor\`eme 5]{beauville1983varietes}.  We define the Mukai extension of the quadratic space $(H^2_{X}, q)$ as the vector space
 $
 \tilde{H}^2_X \coloneqq \QQ \cdot v \, \oplus \, {H}^2_{X} \, \oplus \, \QQ\cdot w $, equipped with the pairing 
 \[
 \tilde{q} \bigl( (av,b,c w), (a'v, b', c' w)\bigr)= q(b,b')- a c' - a'c .
 \]
 
 \subsection{} 
Given $x\in H^2_{X}$, let $L_x\colon H^*_{X} \to H_{X}^{*+2}$ be given by cup-product with $x$. We say that $x$ has the Lefschetz property if the maps 
 $L_x^k\colon H^{2n-k}_{X} \to H^{2n+k}_{X}$ 
 are isomorphisms for all $k>0$. Let $\theta$ denote the endomorphism of the cohomology which acts on~$H^j_{X}$ as multiplication by $j-2n$. It is well-known that the class $x$ has the Lefschetz property if and only if there exists $\Lambda_x\colon H_{X}^* \to H_{X}^{*-2}$ such that $(L_x, \theta, \Lambda_x)$ is an $\mathfrak{sl}_2$-triple, meaning that we have 
 \[
 [\theta,L_x]=2L_x, \ [\theta, \Lambda_x]=-2\Lambda_x, \ [L_x,\Lambda_x]=\theta.
 \] 
 If it exists, $\Lambda_x$ is uniquely determined. The subset of $x\in H^2_{X}$ with the Lefschetz property is Zariski open in $H^2_X$, and the first Chern class of an ample divisor on~$X$ has the Lefschetz property by the Hard Lefschetz theorem. 
 \begin{definition} 
 	The LLV-Lie algebra $\mathfrak{g}(X)$ of $X$ is the Lie subalgebra of $\mathfrak{gl}(H^*_{X}) $ generated by all $\mathfrak{sl}_2$-triples $(L_x, \theta, \Lambda_x)$ for $x\in H^2_{X}$ with the Lefschetz property. We let $\mathfrak{g}_0(X)\subset \mathfrak{g}(X)$ denote the centralizer of the semisimple element $\theta$.
 \end{definition}
In other words, $\mathfrak{g}_0(X)$ consists of those endomorphisms in $\mathfrak{g}(X)$ whose action on $H^*_X$ preserve the grading.
 
 The LLV-Lie algebras of hyper-K\"{a}hler varieties have been fully described.
 
 \begin{theorem}[{\cite{verbitsky1996cohomology}, \cite{looijenga1997lie}}] 
 	\phantomsection
 	\label{thm:LLV}
 	\begin{enumerate}[label={(\alph*)}]
 		\item There exists a unique isomorphism of $\QQ$-Lie algebras 
 		\[
 		\varphi \colon \mathfrak{g}(X)\xrightarrow{\ \sim \ } \mathfrak{so} (\tilde{H}^2_{X},\tilde{q}) ,
 		\]
 		such that:
 		\begin{itemize}
 			\item $\varphi(\theta)$ vanishes on $H^2_{X}$, $\varphi(\theta)(v)=-2v$ and $\varphi(\theta)(w)=2w$, and
 			\item for any $x\in H^2_{X}$ with the Lefschetz property, we have $\varphi(L_{x})(v)= x$, $\varphi(L_x)(w)=0$ and $\varphi(L_x)(y)= q(x,y) \cdot w$, for all $y\in H^2_{X}$.
 		\end{itemize} 
 		\item The isomorphism $\varphi$ restricts to an isomorphism 
 		\[\mathfrak{g}_0(X)\cong \mathfrak{so} (H^2_{X}, q)\oplus \QQ \cdot \varphi(\theta);\] the induced representation of $ \mathfrak{so}(H^2_{X}, q\bigr)$ on $H^2_{X}$ is the standard representation.
 	\end{enumerate}
 \end{theorem}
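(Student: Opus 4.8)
The plan is to build the isomorphism $\varphi$ and its inverse essentially by hand, exploiting the internal $\ZZ$-grading of the orthogonal Lie algebra of the Mukai extension. Write $V\coloneqq H^2_X$, $\tilde V\coloneqq\tilde H^2_X$ and $\mathfrak g\coloneqq\mathfrak g(X)$. Let $H\in\mathfrak{so}(\tilde V,\tilde q)$ be the operator with $H(v)=-2v$, $H(w)=2w$ and $H|_V=0$; then $\ad(H)$ splits $\mathfrak{so}(\tilde V)=\mathfrak n^-\oplus\mathfrak m\oplus\mathfrak n^+$ into the eigenspaces for the eigenvalues $-2,0,2$, where $\mathfrak m=\mathfrak{so}(V)\oplus\QQ H$, $\mathfrak n^+=\{e_x:x\in V\}$ with $e_x(v)=x$, $e_x(y)=q(x,y)w$, $e_x(w)=0$, and $\mathfrak n^-=\{f_x:x\in V\}$ with $f_x(w)=x$, $f_x(y)=q(x,y)v$, $f_x(v)=0$. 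Both $\mathfrak n^\pm$ are abelian, $[e_x,f_z]=\tfrac12 q(x,z)H+\gamma_{x,z}$ with $\gamma_{x,z}\in\mathfrak{so}(V)$ the rotation $y\mapsto q(z,y)x-q(x,y)z$, the $\gamma_{x,z}$ span $\mathfrak{so}(V)$ and act on $\mathfrak n^\pm$ by the standard representation, and $(e_x,H,\tfrac 2{q(x,x)}f_x)$ is an $\mathfrak{sl}_2$-triple whenever $q(x,x)\neq 0$. With this in hand, uniqueness of $\varphi$ is immediate: the displayed conditions force $\varphi(\theta)=H$ and $\varphi(L_x)=e_x$ for every $x$ with the Lefschetz property (a Zariski dense set), hence---since $\varphi$ sends the $\mathfrak{sl}_2$-triple $(L_x,\theta,\Lambda_x)$ to an $\mathfrak{sl}_2$-triple and the third member of a triple is determined by the first two---also $\varphi(\Lambda_x)$; and these elements generate $\mathfrak g$.

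For existence I would produce the inverse $\psi\coloneqq\varphi^{-1}$ as a Lie algebra homomorphism $\mathfrak{so}(\tilde V)\to\mathfrak{gl}(H^*_X)$, prescribing it on the three graded pieces above. Set $\psi(H)=\theta$ and $\psi(e_x)=L_x$; the latter is legitimate for all $x\in V$, because $x\mapsto L_x$ is linear, $\mathfrak g\subseteq\mathfrak{gl}(H^*_X)$ is a linear---hence Zariski closed---subspace, and the Lefschetz classes are Zariski dense (and contained in $\{q(x,x)\ne 0\}$, by the Fujiki relation). Graded-commutativity of cup product gives $[L_x,L_y]=0$, so $\psi$ is a homomorphism on $\mathfrak n^+$. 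On $\mathfrak n^-$ one first needs that $x\mapsto\tfrac12 q(x,x)\Lambda_x$, defined a priori only on the Lefschetz locus, is the restriction of a linear map $V\to\mathfrak g$---a small but genuine point, settled via the explicit description of the lowering operator of an $\mathfrak{sl}_2$-representation, or by polarising $[L_x,\Lambda_x]=\theta$; call the result $\psi(f_x)$, and set $\psi(\gamma_{x,z})\coloneqq[L_x,\psi(f_z)]-\tfrac12 q(x,z)\theta$. A short computation with the Fujiki relation now identifies these operators on $H^0_X\oplus H^2_X$: one finds $\psi(f_z)|_{H^0_X}=0$ and $\psi(f_z)(x)=n\,q(x,z)\cdot 1$ for $x\in H^2_X$; granting that $\psi$ is a homomorphism (below), it follows formally---using also that $\mathfrak{so}(V)$, being perfect, acts trivially on $H^0_X$---that $\psi(\gamma_{x,z})$ annihilates $H^0_X$ and acts on $H^2_X$ exactly as $\gamma_{x,z}\in\mathfrak{so}(V)$, which is the assertion about the representation on $H^2_X$ in part (b).

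The \emph{main obstacle} is the verification that $\psi$ respects the remaining brackets, and I expect this to reduce entirely to understanding the $\ad(\theta)$-grading $\mathfrak g=\bigoplus_k\mathfrak g_{2k}$: one must show that $\mathfrak g_{2k}=0$ for $|k|\ge 2$, that $\mathfrak g_2=\{L_x:x\in V\}$ and $\mathfrak g_{-2}=\{\Lambda_x\}$ are abelian---in particular Verbitsky's commutation relation $[\Lambda_x,\Lambda_z]=0$---and that $\mathfrak g_0=\QQ\theta\oplus\langle\psi(\gamma_{x,z})\rangle$ with the second summand isomorphic to $\mathfrak{so}(V)$. Granting this, the Jacobi identity propagates the relation $[\psi(e_x),\psi(f_z)]=\tfrac12 q(x,z)\theta+\psi(\gamma_{x,z})$ from $H^0_X\oplus H^2_X$---where it holds by the computation above---to all of $H^*_X$ by an $\mathfrak g_0$-module argument, and likewise yields the $\mathfrak{so}(V)$-equivariance of $\psi$ on $\mathfrak n^\pm$; so $\psi$ is indeed a homomorphism. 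For the grading statement there are two classical routes. One is the method of Looijenga--Lunts: since $\mathfrak g$ is generated by $\mathfrak{sl}_2$-triples it is reductive and $H^*_X$ a semisimple module, and this, combined with the fact that under Poincar\'e duality each $L_x$, and hence each $\Lambda_x$, is self-adjoint, together with the analysis of the $\mathfrak{sl}_2$-subquotients of $H^*_X$, pins down the graded pieces. The other is Verbitsky's original argument: after fixing a hyper-K\"ahler metric, the K\"ahler classes of the complex structures $I,J,K$ and the associated $\mathrm{SU}(2)$ of rotations already generate a copy of $\mathfrak{so}(4,1)$ inside $\mathfrak{gl}(H^*_X\otimes\RR)$; as the complex structure varies over the period domain, the K\"ahler classes that occur sweep out a Zariski dense cone in $V_\RR$, and the Lie algebra generated by all the corresponding $\mathfrak{sl}_2$-triples is forced to be $\mathfrak{so}(\tilde V_\RR)$, which descends to $\QQ$.

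Once $\psi$ is known to be a homomorphism the conclusion is formal. Its image contains $\theta$ and every $\mathfrak{sl}_2$-triple $(L_x,\theta,\Lambda_x)$ with $x$ Lefschetz, hence all of $\mathfrak g$, while it lies in $\mathfrak g$ by construction; so $\psi$ surjects onto $\mathfrak g$. Since $b_2(X)\ge 3$ forces $\dim\tilde V\ge 5$, the Lie algebra $\mathfrak{so}(\tilde V)$ is simple, so $\ker\psi$---an ideal---is $0$ or everything, the latter being excluded because $\theta\ne 0$; thus $\varphi\coloneqq\psi^{-1}$ is a Lie algebra isomorphism $\mathfrak g(X)\xrightarrow{\sim}\mathfrak{so}(\tilde V,\tilde q)$, and it satisfies the normalisations by construction. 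This proves (a). For (b), $\mathfrak g_0(X)$ is the centraliser of $\theta=\psi(H)$ in $\mathfrak g$, so $\varphi(\mathfrak g_0(X))$ is the centraliser of $H$ in $\mathfrak{so}(\tilde V)$, namely the weight-zero piece $\mathfrak m=\mathfrak{so}(V)\oplus\QQ H=\mathfrak{so}(H^2_X,q)\oplus\QQ\varphi(\theta)$ (the dimension count $\dim\mathfrak m=\binom{b_2}{2}+1$ being consistent), and the induced representation of the $\mathfrak{so}(H^2_X,q)$-summand on $H^2_X$ is the standard one by the computation of the second paragraph.
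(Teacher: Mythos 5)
The paper offers no proof of this theorem: it is quoted from Verbitsky and Looijenga--Lunts, so there is no internal argument to compare yours against. Your sketch follows precisely the route of those references: the triangular decomposition $\mathfrak{so}(\tilde{H}^2_X,\tilde q)=\mathfrak n^-\oplus\bigl(\mathfrak{so}(H^2_X,q)\oplus\QQ H\bigr)\oplus\mathfrak n^+$ under $\ad(H)$, the normalisations $\varphi(\theta)=H$, $\varphi(L_x)=e_x$, $\varphi(\Lambda_x)=\tfrac{2}{q(x,x)}f_x$, and the reduction of existence to the structure of the $\ad(\theta)$-grading of $\mathfrak g(X)$. The computations you do carry out check out: the bracket $[e_x,f_z]=\tfrac12 q(x,z)H+\gamma_{x,z}$, the constant $n\,q(x,z)$ coming from the length-$(2n+1)$ string generated by $1\in H^0_X$, the non-isotropy and Zariski density of Lefschetz classes via Fujiki's relation, and the derivation of the standard representation on $H^2_X$ from $[\gamma,L_y]=L_{\gamma(y)}$ applied to $1\in H^0_X$. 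You also correctly isolate the two genuine subtleties --- linearity of $x\mapsto\tfrac12 q(x,x)\Lambda_x$, and above all the vanishing of $\mathfrak g_{2k}$ for $|k|\ge 2$ together with the commutativity of $\mathfrak g_{\pm 2}$ (Verbitsky's relation $[\Lambda_x,\Lambda_z]=0$) and the identification of $\mathfrak g_0$. Be aware, though, that this last block is the entire analytic content of the theorem, and you do not prove it: you name the Looijenga--Lunts route (reductivity, Poincar\'e self-adjointness of $L_x$ and $\Lambda_x$, analysis of $\mathfrak{sl}_2$-isotypic pieces) and Verbitsky's route (the $\mathfrak{so}(4,1)$ generated by a hyper-K\"ahler triple of K\"ahler classes, swept over the twistor/period family) without executing either. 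As it stands the proposal is a correct and well-organised roadmap --- on par with the paper's own treatment, which is a citation --- but not a self-contained proof; to make it one you would have to supply one of those two arguments for the grading in full.
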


For later use, we note the following functoriality property of the LLV-construction.
\begin{lemma}\label{lem:functoriality}
	Let $X_1$ and $X_2$ be hyper-K\"{a}hler varieties and let $F\colon H^*_{X_1} \to H^*_{X_2}$ be an isomorphism of graded algebras. Then the induced isomorphism $F_*\colon \GL(H^*_{X_1})\to \GL(H^*_{X_2})$ given by $A\mapsto FAF^{-1}$ restricts to an isomorphism $\mathfrak{g}(X_1)\to \mathfrak{g}(X_2)$.
\end{lemma}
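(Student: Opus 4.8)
The plan is to exploit the intrinsic, cohomological characterization of the LLV-Lie algebra: by definition $\mathfrak{g}(X_i)$ is generated by the $\mathfrak{sl}_2$-triples $(L_x,\theta,\Lambda_x)$ attached to classes $x\in H^2_{X_i}$ with the Lefschetz property, and the only structure on $H^*_{X_i}$ that enters this definition is the graded algebra structure (cup product) together with the grading. Since $F$ is an isomorphism of graded algebras, it transports all of this data from $X_1$ to $X_2$, and the result should follow by checking that $F_*$ sends each generating $\mathfrak{sl}_2$-triple of $\mathfrak{g}(X_1)$ to a generating $\mathfrak{sl}_2$-triple of $\mathfrak{g}(X_2)$.

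Concretely, first I would record that $F$ intertwines the grading operators, i.e.\ $F\,\theta_{X_1} = \theta_{X_2}\,F$: indeed $F$ maps $H^j_{X_1}$ to $H^j_{X_2}$ (a graded algebra isomorphism between the cohomology rings of two $2n$-dimensional varieties necessarily preserves degrees, as $H^0$ is the unit and the top degree is detected by the ring structure), and $\theta_{X_i}$ acts on $H^j_{X_i}$ as multiplication by $j-2n$. Next, for $x\in H^2_{X_1}$ set $y\coloneqq F(x)\in H^2_{X_2}$. Since $F$ is a ring isomorphism, $F\circ L_x = L_{y}\circ F$ as operators on $H^*$; consequently $F_*(L_x)=L_y$. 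Because $F$ is an isomorphism, $L_x^k\colon H^{2n-k}_{X_1}\to H^{2n+k}_{X_1}$ is an isomorphism if and only if $L_y^k\colon H^{2n-k}_{X_2}\to H^{2n+k}_{X_2}$ is, so $x$ has the Lefschetz property precisely when $y$ does. In that case $(L_y,\theta_{X_2},\Lambda_y)$ is an $\mathfrak{sl}_2$-triple, and $F_*(L_x, \theta_{X_1},\Lambda_x) = (L_y,\theta_{X_2},F_*\Lambda_x)$ is again an $\mathfrak{sl}_2$-triple (conjugation by $F_*$ is a Lie algebra automorphism of $\mathfrak{gl}(H^*)$, hence preserves the triple relations); by the uniqueness of $\Lambda_y$ recorded after the $\mathfrak{sl}_2$-triple definition, we get $F_*\Lambda_x=\Lambda_y$. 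Thus $F_*$ carries every generator of $\mathfrak{g}(X_1)$ into $\mathfrak{g}(X_2)$, so $F_*(\mathfrak{g}(X_1))\subset \mathfrak{g}(X_2)$; applying the same argument to $F^{-1}$ gives the reverse inclusion, and therefore $F_*$ restricts to an isomorphism $\mathfrak{g}(X_1)\xrightarrow{\sim}\mathfrak{g}(X_2)$.

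I do not expect a serious obstacle here; the only point requiring a little care is the claim that a graded algebra isomorphism between the two cohomology rings is automatically compatible with the $\ZZ$-grading in the sense needed (so that it identifies $\theta_{X_1}$ with $\theta_{X_2}$, in particular forcing $\dim X_1 = \dim X_2$). This is where one uses that the varieties are hyper-K\"ahler of the same dimension — in the applications $X_1$ and $X_2$ are deformation equivalent or $H^*_\ell$-equivalent, so this is built in — and that the top cohomology is characterized ring-theoretically. Once that is settled, the rest is the formal transport-of-structure argument sketched above.
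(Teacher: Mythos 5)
Your argument is correct and follows essentially the same route as the paper's proof: transport each generating $\mathfrak{sl}_2$-triple $(L_x,\theta_{X_1},\Lambda_x)$ via $F_*$, observe $F_*(L_x)=L_{F(x)}$ and $F_*(\theta_{X_1})=\theta_{X_2}$ because $F$ is a graded algebra isomorphism, and conclude $F_*(\Lambda_x)=\Lambda_{F(x)}$ by uniqueness of the $\mathfrak{sl}_2$-completion. Your extra care about the Lefschetz property being preserved and about running the argument for $F^{-1}$ to get the reverse inclusion only makes explicit what the paper leaves implicit.
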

\begin{proof}
	Let $x\in H^2_{X_1}$ be an element with the Lefschetz property, and consider the corresponding $\mathfrak{sl}_2$-triple $(L_x, \theta_{X_1}, \Lambda_x)$. Then $(FL_xF^{-1}, F\theta_{X_1} F^{-1}, F\Lambda_x F^{-1})$ is again an $\mathfrak{sl}_2$-triple; moreover, since $F$ is an isomorphism of graded algebras it is immediate to check that $FL_xF^{-1}=L_{F(x)}$ and $F\theta_{X_1} F^{-1}=\theta_{X_2}$. Further, $F(x)$ has the Lefschetz property as well, and it follows that $F_*(\Lambda_x)=\Lambda_{F(x)}$. Since the Lie algebra $\mathfrak{g}(X_1)$ is generated by the $\mathfrak{sl}_2$-triples $(L_x, \theta_{X_1}, \Lambda_x)$ as above, this concludes the proof.
\end{proof}
\begin{remark}\label{rem:fujiki1}
	Together with Theorem \ref{thm:LLV}, Lemma \ref{lem:functoriality} implies that any linear isomorphism $f\colon H^2_{X_1}\to H^2_{X_2}$ which extends to an isomorphism $H^*_{X_1}\to H^*_{X_2}$ of graded algebras induces an isomorphism $f_*\colon \mathfrak{so}(H^2_{X_1})\to \mathfrak{so}(H^2_{X_2})$. This fact can be seen as a consequence of Fujiki's relation, which gives positive rational constants $\lambda_i$, for $i=1,2$, such that $q(\alpha,\alpha)^{n}=\lambda_i \int_{X_i} \alpha^{2n}$ for any $\alpha\in H^2_{X_i}$, see \cite[\S1.11]{Huy99}.
\end{remark}

 \subsection{} 
We let $\mathrm{G}(X)$ be the semisimple simply connected algebraic group with Lie algebra $\mathfrak{g}(X)$, and let $\mathrm{G}_0(X)\subset \mathrm{G}(X)$ be the unique connected subgroup with Lie algebra $\mathfrak{g}_0(X)$. By Theorem \ref{thm:LLV} we have an isomorphism
\[
\tilde{\varphi}\colon \mathrm{G}(X) \xrightarrow{\ \sim \ } \Spin(\tilde{H}^2_X, \tilde{q}).
\]
Let $U\coloneqq \langle v,w \rangle$, equipped with the restriction of $\tilde{q}$. Since $\tilde{H}^2_X= H^2_X\oplus U$, we can view $\Spin(H^2_X, q)$ and $\Spin(U)$ as algebraic subgroups of $\Spin( \tilde{H}^2_X, \tilde{q} )$. We have $\Spin(U)\cong \mathbb{G}_m$, and the Lie algebra of $\Spin(U)\subset \Spin(\tilde{H}^2_X,\tilde{q})$ is $\QQ\cdot \varphi(\theta)$. Moreover, $\Spin(H^2_X, q)\cap \Spin(U)=\{\pm 1\} $. We conclude that $\tilde{\varphi}$ restricts to an isomorphism
\[
\tilde{\varphi}\colon \mathrm{G}_0(X)\xrightarrow{\ \sim \ } \CSpin(H^2_X, q)=\Spin(H^2_X, q)\cdot \Spin(U).
\]

The above assertions are checked as follows. With respect to the basis $\{v, -\frac{w}{2} \}$, the matrix of $\tilde{q}_{|_U} $ is $\Bigl(\begin{smallmatrix}
0 & 1/2 \\
{1}/{2} & 0
\end{smallmatrix}\Bigr)$. Let $\mathrm{Cl}(U)$ be the Clifford algebra on $U$. Then $\mathrm{Cl}(U)$ is identified with the algebra of $2$ by $2$ matrices with coefficients in $\QQ$; an isomorphism is given by 
\[
v\mapsto \begin{pmatrix} 0 & 0 \\ 1 & 0 \end{pmatrix}, \ -\frac{w}{2} \mapsto \begin{pmatrix} 0 & 1 \\ 0 & 0 \end{pmatrix}.
\]
The even Clifford algebra $\mathrm{Cl}^+(U)$ consists of the diagonal matrices, while $\mathrm{Cl}^{-}(U)$ consists of those matrices with $0$ on the diagonal. The spinor norm $\mathrm{Cl}(U)^{\times }\to \QQ^{\times} $ is the determinant. Therefore $\Spin(U)\cong \mathbb{G}_{m}$ is the standard maximal torus of $\mathrm{SL}_2$. The adjoint action of $\Spin(U)$ on $\tilde{H}^2_X$ is trivial on the summand $H^2_{X}$, and we have
\[
\begin{pmatrix} \lambda & 0 \\ 0 & \lambda^{-1} \end{pmatrix} v \begin{pmatrix} \lambda^{-1} & 0 \\ 0 & \lambda \end{pmatrix}  = \lambda^{-2} v, \ \begin{pmatrix} \lambda & 0 \\ 0 & \lambda^{-1} \end{pmatrix}  w \begin{pmatrix} \lambda^{-1} & 0 \\ 0 & \lambda \end{pmatrix}  = \lambda^{2} w.
\]
 This implies that the subgroup $\Spin(U)\subset \Spin(\tilde{H}^2_X, \tilde{q})$ corresponds to the Lie subalgebra $\QQ\cdot \theta$ of $\mathfrak{so}(\tilde{H}^2_{X}, \tilde{q})$. Finally, since $\mathrm{Cl}(\tilde{H}^2_{X}, \tilde{q})= \mathrm{Cl}(H^2_{X}, q)\otimes \mathrm{Cl}(U, \tilde{q}_{|_U})$, it is clear that we have $\Spin(H^2_{X}, q)\cap \Spin(U)=\{\pm 1\}$.
 
\subsection{}
The action of $\mathfrak{g}(X)$ on $H^*_X$ integrates to a representation $\tilde{\rho}\colon \mathrm{G}(X)\to \GL(H^*_X)$, which restricts to 
\[
\tilde{\rho}_0\colon \mathrm{G}_0(X) \to \prod_j \GL(H^j_X).
\]
We denote by $\tilde{\rho}_0^{(2)}\colon \mathrm{G}_0(X)\to \GL(H^2_X)$ its degree $2$ component.

In what follows, we identify $\mathrm{G}(X)$ with $\Spin(\tilde{H}^2_X, \tilde{q})$ and $\mathrm{G}_0(X)$ with $\CSpin(H^2_X, q)$ via $\tilde{\varphi}$. If there is no risk of confusion, we simply write $ \CSpin(H^2_X)$ for $\CSpin(H^2_X, q)$, and similarly for other groups.

\begin{remark} 
\label{rem:minusone}
Consider the element $-1\in \CSpin(H^2_X)\subset \Spin(\tilde{H}^2_X)$. It  has been shown by Verbitsky in \cite[\S8]{verbitsky1995mirror} that for all $j\geq 0$ and any $v\in H^j_X$ we have $\tilde{\rho}(-1) (v)=(-1)^j$. Combining this with Theorem \ref{thm:LLV}, it follows that $\tilde{\rho}$ is faithful if~$X$ has non-trivial cohomology in some odd degree, and that $\tilde{\rho}$ has kernel $\{\pm 1\}$ otherwise.
\end{remark}
\subsection{} 
The connected center of the algebraic group $\CSpin(H^2_X)$ is the subgroup $\mathbb{G}_m$ of invertible scalars in the Clifford algebra, and we have short exact sequences of algebraic groups
\[
\begin{tikzcd} 
1 \arrow{r} & \mathbb{G}_m \arrow{r} & \CSpin(H^2_X) \arrow{r}{\pi} &  \SO(H^2_X) \arrow{r} & 1
\end{tikzcd}
\]
and
\[
\begin{tikzcd} 
1 \arrow{r} & \Spin(H^2_X) \arrow{r} & \CSpin(H^2_X) \arrow{r}{\mathrm{Nm}} &  \mathbb{G}_m \arrow{r} & 1 
\end{tikzcd}
\]
such that for all $z\in \mathbb{G}_m\subset \CSpin(H^2_X)$ we have $\mathrm{Nm}(z)=z^{2}$.

In addition to the representation $\tilde{\rho}_0$, we will consider a second, twisted, action of $\CSpin(H^2_X)$ on $H^*_X$, which we will refer to as the $R$-action. It is defined via the homomorphism
\[
R\colon \CSpin(H^2_X) \to \prod_j \GL(H^j_X)
\]
given by $R(g)=\mathrm{Nm}(g)^{n} \cdot \tilde{\rho}_0(g)$.

\begin{lemma}\label{lem:byauto}
	The $R$-action on $H^*_X$ is an action by graded algebra automorphisms.
\end{lemma}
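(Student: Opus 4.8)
The claim is that the $R$-action of $\CSpin(H^2_X)$ on $H^*_X$, defined by $R(g) = \mathrm{Nm}(g)^n \cdot \tilde{\rho}_0(g)$, is by graded algebra automorphisms. Since each $R(g)$ visibly preserves the grading (both $\mathrm{Nm}(g)^n$ and $\tilde{\rho}_0(g)$ do), the only thing to check is compatibility with the cup product, i.e.\ that $R(g)(\alpha \cup \beta) = R(g)(\alpha) \cup R(g)(\beta)$ for all $\alpha, \beta \in H^*_X$ and all $g \in \CSpin(H^2_X)$.

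The plan is to reduce to the infinitesimal statement and verify it on generators of $\mathfrak{g}_0(X)$. First I would observe that, because $\CSpin(H^2_X) = \mathrm{G}_0(X)$ is connected, it suffices to show that the corresponding Lie algebra action is by derivations of the cup product; the group statement then follows by integrating (the automorphism condition is a closed condition cut out by the derivation condition at the Lie algebra level). The Lie algebra acting via $R$ differs from $\mathfrak{g}_0(X) = \mathfrak{so}(H^2_X) \oplus \QQ\cdot\varphi(\theta)$ acting via $\tilde\rho_0$ only by the addition of a scalar operator: differentiating $\mathrm{Nm}(g)^n$ contributes, on $H^j_X$, a scalar depending on the $\mathrm{Nm}$-weight. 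Concretely, since $\mathrm{Nm}(z) = z^2$ on the central $\mathbb{G}_m$ and the center acts on $H^j_X$ through a character tied to $\theta$, the extra term is a multiple of the operator $\mathrm{id}$ on each graded piece—more precisely it shifts the $\theta$-action. So I would compute: on $\mathfrak{so}(H^2_X)$ the $R$-action agrees with $\tilde\rho_0$, which is the LLV-action, and the LLV-generators $L_x, \Lambda_x$ act as derivations of cup product ($L_x$ literally is cup product with $x$, hence a derivation by the Leibniz rule for $\cup$; $\Lambda_x$ is a derivation because it is, up to sign and the $\mathfrak{sl}_2$-relations, the adjoint/Hodge-dual operator—this is a standard fact, provable by a bracket computation $[\theta, \Lambda_x] = -2\Lambda_x$ together with knowing $\mathfrak{g}_0$ is generated by $\theta$ and the $L_x$). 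The element $\theta$ itself is not a derivation of $\cup$ (it acts on $H^j$ by $j - 2n$, and $j_1 + j_2 - 2n \ne (j_1 - 2n) + (j_2 - 2n)$ in general), and this is exactly why the $\mathrm{Nm}^n$ twist is needed: the twist replaces $\theta$'s action on $H^j$ by $(j - 2n) + 2n = j$ — wait, one must track the normalization — in any case the twist is designed precisely so that the resulting degree-grading operator $D$ acting on $H^j$ by $j$ (times a fixed scalar) satisfies $D(\alpha\cup\beta) = (j_1 + j_2)\,\alpha\cup\beta = D(\alpha)\cup\beta + \alpha\cup D(\beta)$, i.e.\ $D$ is a derivation.

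So the key steps are: (1) reduce the group statement to the Lie algebra statement using connectedness of $\CSpin(H^2_X)$; (2) identify the Lie algebra acting via $R$ as generated (over $\QQ$) by the operators $L_x$ and $\Lambda_x$ for $x$ with the Lefschetz property, together with the degree operator $D\colon H^j_X \to H^j_X$, $\alpha \mapsto j\cdot\alpha$ (the precise normalization coming from $R(g) = \mathrm{Nm}(g)^n\tilde\rho_0(g)$ and $\mathrm{Nm}|_{\mathbb{G}_m} = (\cdot)^2$); (3) check each of these is a derivation of the cup product: $L_x$ by the Leibniz rule, $D$ by additivity of degrees, and $\Lambda_x$ via the $\mathfrak{sl}_2$-relations of Theorem~\ref{thm:LLV} combined with the fact that the commutator of a derivation with a derivation is a derivation (so $\Lambda_x = -\tfrac12[\Lambda_x, [\Lambda_x, L_x]] \cdots$—or more cleanly, express $\Lambda_x$ through $L_y$'s and $\theta$'s using $[L_y, \Lambda_x]$-type identities, or cite that the LLV Lie algebra acts by derivations up to the $\theta$-correction); (4) conclude that the whole Lie algebra acts by derivations, hence each $R(g)$ is an algebra automorphism.

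The main obstacle is step (3) for $\Lambda_x$: unlike $L_x$, the operator $\Lambda_x$ has no manifest Leibniz-type description, so one needs either an explicit argument that the adjoint/dual Lefschetz operator is a derivation of cup product on a hyper-K\"ahler manifold, or a purely Lie-theoretic argument showing that once $\theta$ is corrected to the derivation $D$ and the $L_x$ are derivations, the generated Lie algebra — which contains $\Lambda_x = \tfrac12[[L_x,\cdot],\cdot]$-style brackets with the corrected elements — consists of derivations because the bracket of derivations is a derivation. I expect the cleanest route is the latter: observe that the set of derivations of the graded algebra $H^*_X$ is a Lie subalgebra of $\mathfrak{gl}(H^*_X)$ containing all $L_x$ and containing $D$, hence contains the Lie algebra they generate; and that Lie algebra, by Theorem~\ref{thm:LLV}(b) adapted to the $R$-normalization, is exactly the one integrating to the $R$-action. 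A short verification is still required that the $\Lambda_x$ (or equivalently $\mathfrak{so}(H^2_X)$) lie in the subalgebra generated by the $L_x$ and $D$ — this follows because $[\theta, L_x] = 2L_x$ lets one recover $\theta$-graded pieces and then $[L_x, \Lambda_x] = \theta$ together with the $\mathfrak{sl}_2$ structure pins down $\Lambda_x$ inside the generated algebra.
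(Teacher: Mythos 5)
There is a genuine gap at exactly the point you flag as the main obstacle, and neither of your proposed routes around it works. First, the Lie algebra relevant to the $R$-action is $\mathfrak{g}_0(X)=\mathfrak{so}(H^2_X)\oplus\QQ\cdot\theta$, not the full LLV algebra: the operators $L_x$ and $\Lambda_x$ have degree $\pm 2$ and do not lie in $\Lie \CSpin(H^2_X)$, so ``generated by the $L_x$, $\Lambda_x$ and $D$'' is the wrong description of what must be shown to act by derivations. Second, $\Lambda_x$ is in fact \emph{not} a derivation of the cup product: on a K3 surface take $\alpha\in H^2$ with $q(x,\alpha)=0$ and $\alpha\cup\alpha\neq 0$; then $\Lambda_x(\alpha)\cup\alpha+\alpha\cup\Lambda_x(\alpha)=0$, while $\Lambda_x(\alpha\cup\alpha)\neq 0$ because $\Lambda_x\colon H^4\to H^2$ is injective. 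Third, the purely Lie-theoretic fallback also fails: the $L_x$ commute with one another and $[D,L_x]=2L_x$, so the Lie subalgebra of $\mathfrak{gl}(H^*_X)$ generated by the $L_x$ and $D$ is solvable and cannot contain the semisimple algebra $\mathfrak{so}(H^2_X)$, let alone the $\Lambda_x$; the relation $[L_x,\Lambda_x]=\theta$ does not let you recover $\Lambda_x$ from $L_x$ and $\theta$ (in $\mathfrak{sl}_2$ the subalgebra generated by $e$ and $h$ is only the Borel).

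The missing ingredient is precisely the nontrivial fact that the semisimple part $\mathfrak{so}(H^2_X)$ of $\mathfrak{g}_0(X)$ acts on $H^*_X$ by derivations, which the paper simply cites from Looijenga--Lunts (4.4); your parenthetical ``or cite that the LLV Lie algebra acts by derivations up to the $\theta$-correction'' is the correct move, but it is the entire content of that half of the lemma rather than an optional shortcut, and it cannot be rederived from Leibniz for $L_x$ alone. The remaining part of your argument is correct and matches the paper: $z\in\mathbb{G}_m$ acts via $\tilde{\rho}_0$ on $H^j_X$ by $z^{j-2n}$, and the twist by $\mathrm{Nm}(z)^n=z^{2n}$ turns this into $z^{j}$, which is multiplicative for the cup product since degrees add; together with $\CSpin(H^2_X)=\mathbb{G}_m\cdot\Spin(H^2_X)$ this finishes the proof.
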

\begin{proof}
	It has been shown in \cite[(4.4)]{looijenga1997lie} that the semisimple part $\mathfrak{so}(H^2_X)$ of the Lie algebra of $\CSpin(H^2_X)$ acts on the cohomology algebra via derivations; it follows that the subgroup $\Spin(H^2_X)$ acts on $H^*_X$ by graded algebra isomorphisms. 
	Moreover for any $z\in \mathbb{G}_m$ and $y\in H^j_X$ we have $\tilde{\rho}_0(z)(y)=z^{j-2n}\cdot y$. Thus the factor $\mathrm{Nm}(z)^{n}= z^{2n}$ ensures that also the action of $\mathbb{G}_m\subset \CSpin(H^2_X)$ on $H^*_X$ is by algebra automorphisms. As $\CSpin(H^2_X)=\mathbb{G}_m\cdot \Spin(H^2_X)$, this concludes the proof.
\end{proof}

\begin{remark}\label{rem:isogenies}
The homomorphism
\[
(\mathrm{Nm}, \pi)\colon \CSpin(H^2_X) \to \mathbb{G}_m\times \SO(H^2_X)
\]
is surjective with kernel $\{\pm 1 \}$. By Remark {\ref{rem:minusone}}, the $R$-action on the even cohomology factors through $(\mathrm{Nm}, \pi)$. If $g\in \CSpin(H^2_X)$, then the degree $2$ component $R^{(2)}(g)$ of $R(g)$ equals $\mathrm{Nm}(g) \cdot \pi(g) $, while for $\tilde{\rho}_0(g)$ we have $\tilde{\rho}^{(2)}_0(g)= \mathrm{Nm}(g)^{1-n}\cdot \pi(g)$.

The combination of this observation with Theorem \ref{thm:LLV} implies that the natural homomorphism $R\bigl(\mathrm{G}_0(X)\bigr) \to R^{(2)}\bigl(\mathrm{G}_0(X)\bigr)$ is an isomorphism if the odd cohomology of $X$ vanishes, and it has kernel $\{\pm 1\}$ otherwise.
\end{remark} 
 
 \subsection{}
 Given a $\QQ$-Hodge structure $V$, we let $\MT(V)$ denote its Mumford--Tate group. 
 As a consequence of a result of Verbitsky \cite{verbitsky1996cohomology}, it is known that the LLV-Lie algebra of a hyper-K\"{a}hler variety controls the Hodge structure on its cohomology. We summarize this result as follows, see {\cite[Lemma 6.7]{FFZ}}.
 
 \begin{theorem} \label{thm:HodgeHK}
 	 The Mumford--Tate group $\MT(H^*_X)$ is contained in the image of the representation $R\colon \CSpin(H^2_X)\to \prod_j \GL(H^j_X)$.
\end{theorem}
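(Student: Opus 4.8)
The plan is to realize the Hodge cocharacter of $H^*_X$ as the $R$-action of an explicit cocharacter valued in $\CSpin(H^2_X)_\RR = \mathrm{G}_0(X)_\RR$, and then to invoke the description of $\MT(H^*_X)$ as the smallest $\QQ$-algebraic subgroup of $\GL(H^*_X)$ whose group of real points contains the image of the Hodge cocharacter. To build this cocharacter I would use that $H^2_X$ carries a polarizable weight-$2$ Hodge structure of K3 type, for which $q$ is a morphism of Hodge structures $H^2_X\otimes H^2_X\to\QQ(-2)$: by Deligne's lifting lemma for K3-type Hodge structures (\cite{deligne1971conjecture}), the corresponding homomorphism $\mathbb{S}\to\GL(H^2_X\otimes\RR)$ lifts to a homomorphism $\tilde h\colon\mathbb{S}\to\CSpin(H^2_X)_\RR$ such that $\pi\circ\tilde h$ is the Hodge cocharacter of the weight-$0$ Tate twist $H^2_X(1)$ and $\mathrm{Nm}\circ\tilde h$ is the norm character $z\mapsto z\bar z$ of $\mathbb{S}$. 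Via the isomorphism $\tilde\varphi$ of Theorem \ref{thm:LLV} I would regard $\tilde h$ as a cocharacter of $\mathrm{G}_0(X)_\RR$.

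The heart of the argument is Verbitsky's theorem (\cite{verbitsky1996cohomology}; see also \cite{solda19}) that the Hodge structure on the full cohomology of a complex hyper-Kähler manifold is governed by the LLV action, in the precise sense that the Hodge decomposition of $H^*_X$ is obtained by letting the compact subtorus $U(1)\subset\mathbb{S}(\RR)$ act on $H^*_X$ through $\tilde\rho_0\circ\tilde h$. On $U(1)$ one has $\mathrm{Nm}(\tilde h(z)) = z\bar z = 1$, so $R\circ\tilde h$ and $\tilde\rho_0\circ\tilde h$ agree there, and this common restriction is therefore the Weil operator of $H^*_X$. It then remains to compare the two homomorphisms on the weight torus $\RR_{>0}\subset\mathbb{S}(\RR)$. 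Since $\pi(\tilde h(t)) = 1$ for $t>0$, the element $\tilde h(t)$ lies in the central $\mathbb{G}_m\subset\CSpin(H^2_X)$, and $\mathrm{Nm}(\tilde h(t)) = t^2$ forces $\tilde h(t) = t$; as $\theta$ acts on $H^j_X$ by multiplication by $j-2n$, the element $\tilde\rho_0(\tilde h(t))$ acts on $H^j_X$ by $t^{j-2n}$, and hence $R(\tilde h(t))$ acts on $H^j_X$ by $\mathrm{Nm}(\tilde h(t))^n\cdot t^{j-2n} = t^j$ — exactly the weight behaviour of a Hodge structure having $H^j_X$ pure of weight $j$. (This is precisely what the twist by $\mathrm{Nm}^n$ in the definition of $R$ achieves; the untwisted $\tilde\rho_0\circ\tilde h$ would carry the wrong weights.) Since an algebraic homomorphism out of $\mathbb{S}$ is determined by its restrictions to $U(1)$ and to $\RR_{>0}$, I would conclude that $R\circ\tilde h$ equals the Hodge cocharacter $h_{H^*_X}\colon\mathbb{S}\to\GL(H^*_X\otimes\RR)$.

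The inclusion $\MT(H^*_X)\subseteq R(\CSpin(H^2_X))$ would then follow formally: the right-hand side is a $\QQ$-algebraic subgroup of $\prod_j\GL(H^j_X)$, being the image of the morphism $R$ of $\QQ$-algebraic groups, and its group of real points contains $h_{H^*_X}(\mathbb{S}) = R(\tilde h(\mathbb{S}))$, so minimality of the Mumford--Tate group gives the claim. I expect the main obstacle to be the second step — extracting from Verbitsky's work the precise assertion that the Hodge decomposition of $H^*_X$ is realized, in every degree, by the image under $\tilde\rho_0$ of the compact part of $\tilde h$. This rests on his analysis of the three Kähler classes $\omega_I, \omega_J, \omega_K$ attached to a hyper-Kähler metric: the Weil operator for the complex structure $I$ is an inner rotation within the twistor sphere, hence lies in $\mathrm{G}_0(X)(\RR) = \CSpin(H^2_X)(\RR)$ and is computed there, through $\tilde\rho_0$, from the element of $\Spin(H^2_X)(\RR)$ effecting that rotation. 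One would also have to check that Deligne's normalization of $\tilde h$ is compatible with the identification $\tilde\varphi$ of Theorem \ref{thm:LLV}, which reduces to comparing the actions of the central torus and of $-1\in\CSpin(H^2_X)$ and is routine given Remark \ref{rem:minusone}.
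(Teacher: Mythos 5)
Your argument is correct and is essentially the proof of the cited result \cite[Lemma 6.7]{FFZ} (the present paper only quotes that lemma and gives no proof of its own): one lifts the degree-$2$ Hodge cocharacter to $\CSpin(H^2_X)_\RR$ via the Kuga--Satake construction of \cite{deligne1971conjecture}, invokes Verbitsky's theorem in Soldatenkov's formulation \cite{verbitsky1996cohomology, solda19} to identify $R\circ\tilde h$ with the Hodge cocharacter of $H^*_X$ (the $\mathrm{Nm}^n$-twist fixing the weights exactly as you compute), and concludes by minimality of the Mumford--Tate group. The one step you correctly flag as nontrivial --- that the Weil operators in all degrees are realized through $\tilde\rho_0$ --- is precisely what the cited sources supply, so nothing is missing.
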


We have the following consequence.

\begin{proposition}\label{prop:hodge}
Let $X_1$ and $X_2$ be complex hyper-K\"{a}hler varieties and let $F\colon H^*_{X_1}\to H^*_{X_2}$ be an isomorphism of graded algebras. Assume that the degree $2$ component $F^{(2)}\colon H^2_{X_1}\to H^2_{X_2}$ is an isomorphism of Hodge structures. Then $F$ is an isomorphism of Hodge structures.
\end{proposition}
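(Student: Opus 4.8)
The plan is to show that conjugation by $F$ intertwines the two homomorphisms $h_i\colon \mathbb{S}\to \prod_j \GL(H^j_{X_i})_{\RR}$ defining the graded Hodge structures on $H^*_{X_i}$, where $\mathbb{S}=\mathrm{Res}_{\CC/\RR}\mathbb{G}_m$ and the degree-$2$ component $h_i^{(2)}$ is the Hodge homomorphism of $H^2_{X_i}$; this is exactly the assertion that $F$ is an isomorphism of Hodge structures. The hypothesis gives us this intertwining in degree $2$, and the task is to propagate it to all degrees, using that the Hodge structure on all of $H^*_{X_i}$ is controlled by its degree-$2$ part through the $R$-action.

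The heart of the argument is to transport, under $F_*\coloneqq(g\mapsto FgF^{-1})$, the group through which $h_i$ factors. Let $G_i\coloneqq R\bigl(\CSpin(H^2_{X_i})\bigr)\subset\prod_j\GL(H^j_{X_i})$ be the image of the $R$-action; by Theorem~\ref{thm:HodgeHK} we have $h_i(\mathbb{S})\subseteq \MT(H^*_{X_i})_{\RR}\subseteq G_{i,\RR}$. First I would check that $F_*G_1=G_2$. By Lemma~\ref{lem:functoriality}, $F_*$ maps $\mathfrak{g}(X_1)$ onto $\mathfrak{g}(X_2)$; since $F$ is graded it commutes with $\theta$, so $F_*$ maps $\mathfrak{g}_0(X_1)$ onto $\mathfrak{g}_0(X_2)$ and hence, by uniqueness of connected algebraic subgroups with a given Lie algebra, carries $\tilde{\rho}_0\bigl(\CSpin(H^2_{X_1})\bigr)$ onto $\tilde{\rho}_0\bigl(\CSpin(H^2_{X_2})\bigr)$. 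As $F_*$ sends homotheties to homotheties, it then carries $\hat G_1\coloneqq\tilde{\rho}_0\bigl(\CSpin(H^2_{X_1})\bigr)\cdot\mathbb{G}_m$ onto $\hat G_2$, where $\mathbb{G}_m$ is the group of homotheties. Now $\hat G_i=G_i\cdot\mathbb{G}_m$ since $R(g)=\mathrm{Nm}(g)^{n}\tilde{\rho}_0(g)$, and $G_i$ is precisely the set of elements of $\hat G_i$ that are automorphisms of the graded algebra $H^*_{X_i}$: those in $G_i$ are, by Lemma~\ref{lem:byauto}, while a homothety by $c\neq1$ is not, since it scales a degree-$j$ class by $c$ but a product of two homogeneous classes by $c^2$. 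Since $F$ is a graded algebra isomorphism, conjugation by it preserves the property of being a graded algebra automorphism, whence $F_*G_1=G_2$.

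Granting this, the proof concludes formally. Both $\alpha\coloneqq F h_1(\cdot)F^{-1}$ and $\beta\coloneqq h_2$ are homomorphisms $\mathbb{S}\to G_{2,\RR}$, and composing with the degree-$2$ projection $p\colon G_2\to\GL(H^2_{X_2})$ we get, using that $F$ is graded, $p\circ\alpha\colon z\mapsto F^{(2)}h_1^{(2)}(z)(F^{(2)})^{-1}$ and $p\circ\beta\colon z\mapsto h_2^{(2)}(z)$; these agree precisely because $F^{(2)}$ is an isomorphism of Hodge structures. By the last assertion of Remark~\ref{rem:isogenies}, $\ker(p|_{G_2})$ is trivial if the odd cohomology of $X_2$ vanishes and equals $\{\pm1\}$ otherwise, its nontrivial element $R(-1)$ acting on $H^j_{X_2}$ by $(-1)^j$ and hence lying in the centre of $\prod_j\GL(H^j_{X_2})$. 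Therefore $z\mapsto\beta(z)^{-1}\alpha(z)$ is a homomorphism from the connected group $\mathbb{S}$ to this finite central subgroup, so it is trivial, and $\alpha=\beta$; that is, $F$ is an isomorphism of Hodge structures.

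I expect the middle step to be the only real obstacle: of the three relevant groups, $\hat G_i$ is the one that transports transparently under $F_*$, while $G_i$ is the one through which $h_i$ factors and which has finite kernel on $H^2$; reconciling the two requires the observation that $G_i$ is cut out inside $\hat G_i$ by the condition of acting by graded algebra automorphisms. Everything else rests only on Theorems~\ref{thm:LLV} and~\ref{thm:HodgeHK}, Lemma~\ref{lem:byauto}, and Remark~\ref{rem:isogenies}.
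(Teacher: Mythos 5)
Your proof is correct and follows essentially the same route as the paper: factor the Hodge homomorphisms through the image of the $R$-action via Theorem~\ref{thm:HodgeHK}, transport that image under $F_*$ using Lemma~\ref{lem:functoriality}, and kill the discrepancy by mapping the connected group $\mathbb{S}$ into the finite central kernel of the degree-$2$ projection from Remark~\ref{rem:isogenies}. Your middle step, characterizing $G_i$ inside $\hat G_i$ as the graded algebra automorphisms, is a valid (and slightly more detailed) justification of the transport claim that the paper states more tersely.
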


\begin{proof} 
Let $\mathbb{S}\coloneqq \mathrm{Res}^{\CC}_{\RR}(\mathbb{G}_m)$ be the Deligne torus. The total Hodge structure on~$H^*_{X_i}$ corresponds to a real representation
\[
h_{X_i}\colon \mathbb{S}\to \prod_j \GL(H^j_{X_i}) \otimes \RR;
\]
by definition, $h_{X_i}$ factors through $\MT(H^*_{X_i})(\RR)$.
By Theorem \ref{thm:HodgeHK} the group $\MT(H^*_{X_i})$ is contained in the image of the representation $R\colon \mathrm{G}_0(X_i) \to \prod_j \GL(H^j_{X_i})$. 
By Lemma \ref{lem:functoriality}, the induced isomorphism $F_*\colon \GL(H^*_{X_1})\to \GL(H^*_{X_2})$ restricts to an isomorphism
$\mathfrak{g}(X_1)\cong \mathfrak{g}(X_2)$; since moreover $F$ preserves the cohomological grading, $F_*$ restricts to an isomorphism 
$R\bigl(\mathrm{G}_0(X_1)\bigr)\cong R\bigl({\mathrm{G}}_0(X_2)\bigr)$.

We have to prove that the diagram
\[
\begin{tikzcd}
& \mathbb{S} \arrow[swap]{ld}{h_{X_1}} \arrow{rd}{h_{X_2}} \\ 
R\bigl({\mathrm{G}}_0(X_1)\bigr) (\RR) \arrow{d}{\mathrm{pr}_1}  \arrow{rr}{F_*}  &&  R\bigl({\mathrm{G}}_0(X_2)\bigr)(\RR)  \arrow{d}{\mathrm{pr}_2} \\
R^{(2)}\bigl({\mathrm{G}}_0(X_1)\bigr)({\RR})  \arrow{rr}{F^{(2)}_*}  && R^{(2)}\bigl({\mathrm{G}}_0(X_2)\bigr)({\RR}) 
\end{tikzcd}
\]
is commutative. By Remark \ref{rem:isogenies}, the morphism $\mathrm{pr}_2\colon R\bigl({\mathrm{G}}_0(X_2)\bigr)\to R^{(2)}\bigl({\mathrm{G}}_0(X_2)\bigr)$ is either an isomorphism or a central isogeny of degree $2$; let $C$ be the kernel.
Since $F^{(2)}$ is an isomorphism of Hodge structures, we have $F_*^{(2)}\circ \mathrm{pr}_1 \circ h_{X_1} = \mathrm{pr}_2\circ h_{X_2}$. Hence, there is a morphism $\xi\colon \mathbb{S} \to C$ such that $F_*\circ h_{X_1} = \xi \cdot h_{X_2}$. But $\mathbb{S}$ is connected and $C$ is finite, so $\xi$ is trivial and $F$ is an isomorphism of Hodge structures.
\end{proof} 
\begin{remark}
	In fact Theorem \ref{thm:HodgeHK} (and hence also the Proposition) holds more generally for complex hyper-K\"ahler manifolds which are not necessarily projective. 
\end{remark}

\section{Galois representations}\label{sec:GalRep}

\subsection{} 
Throughout this section, $\ell$ will denote a fixed prime number. Let $K\subset \CC$ be a field that is finitely generated over $\QQ$, and let $\bar{K}$ be the algebraic closure of $K$ in~$\CC$. By a hyper-K\"{a}hler variety over $K$ we mean a smooth projective variety $X$ over $K$ such that the base change $X_{\mathbb{C}}$ is a complex hyper-K\"{a}hler variety. 
For all integers~$j$ and $m$, we have canonical comparison isomorphisms
\[
H_{\et}^{j}(X_{\bar{K}}, \QQ_{\ell})(m) \cong H^{j}(X_{\CC}, \QQ)(m) \otimes_{\QQ} \QQ_{\ell} .
\]
We let $H^*_{X}\coloneqq  \bigoplus_j H^{j}(X_{\CC}, \QQ)$ and adopt the notation from the previous section. We define $H^{*}_{X, \ell} \coloneqq \bigoplus_j H_{\et}^{j}(X_{\bar{K}}, \QQ_{\ell})$. We will identify $H^j_{X, \ell}$ with $H^j_{X}\otimes_{\QQ} \QQ_{\ell}$; the Beauville-Bogomolov form extends to a non-degenerate $\QQ_{\ell}$-valued bilinear pairing on $H^2_{X,{\ell}}$.

There is a continuous representations 
\[
\sigma_X\colon \Gal(\bar{K}/K) \to \prod_j \GL(H^j_{X,\ell}).
\]
We denote by $G(H^*_{X,\ell})\subset \GL(H^*_{X,\ell})$ the Zariski closure of the image of $\sigma_X$. 
The group $G(H^*_{X,\ell})$ is not connected in general. The identity component $G(H^*_{X,\ell})^0$ of this group remains invariant under finitely generated field extensions of $K$. After replacing $K$ with a finite field extension $\hat{K}/K$, the group $G(H^*_{X,\ell})$ becomes connected, see~\cite[Remarks 2.2.2]{moonen17}.

\subsection{} 
Let $K\subset \bar{K}\subset \CC$ be as above. The Mumford--Tate conjecture aims to compare the Hodge structure on $H^*_{X}$ with the Galois representation $\sigma_X$ on $H^*_{X,\ell}$.

\begin{conjecture}[Mumford--Tate conjecture] \label{conj:mtc}
	For any smooth and projective variety $X$ over $K$, the comparison isomorphism $H^{*}_{X,\ell}\cong H^*_{X}\otimes \QQ_{\ell}$ induces an isomorphism of connected algebraic groups
	\[
	G(H^*_{X,\ell})^{0} \cong \MT(H^*_{X}) \otimes_{\QQ} \QQ_{\ell}.
	\]
\end{conjecture}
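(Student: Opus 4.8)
The statement is the Mumford--Tate conjecture in full generality, which is wide open; what follows is therefore a program rather than a proof, and the obstruction I identify is, honestly, the conjecture itself. Via the comparison isomorphism $H^*_{X,\ell}\cong H^*_X\otimes\QQ_\ell$ one regards both $G(H^*_{X,\ell})$ and $\MT(H^*_X)\otimes\QQ_\ell$ as subgroups of $\GL(H^*_X)\otimes\QQ_\ell$, and the equality of their identity components splits into two inclusions. Each of these, by the Chevalley-type description of $\MT(H^*_X)$ as the pointwise stabilizer of all Hodge classes in the tensor spaces $T^{a,b}(H^*_X)(c)$, is equivalent to a precise compatibility statement between Betti and $\ell$-adic cohomology.

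\textbf{First inclusion: $G(H^*_{X,\ell})^{0}\subseteq \MT(H^*_X)\otimes\QQ_\ell$.} This amounts to showing that the image in $\ell$-adic cohomology of every Hodge class becomes Galois-invariant after a finite extension of $K$, equivalently that $G(H^*_{X,\ell})^{0}$ fixes every Hodge tensor. It follows from the Hodge conjecture for $X$ and its powers (an algebraic class spreads out and defines a Galois-invariant $\ell$-adic class over a finite extension), and already from the a priori weaker assertion that the Hodge classes of $X$ are motivated in the sense of Andr\'e, whose $\ell$-adic realizations are Galois-invariant by construction. For abelian varieties this last assertion is Deligne's theorem that Hodge cycles are absolutely Hodge; in general it is not known.

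\textbf{Second inclusion: $\MT(H^*_X)\otimes\QQ_\ell\subseteq G(H^*_{X,\ell})^{0}$.} Granting that $H^*_{X,\ell}$ is semisimple as a Galois module, so that $G(H^*_{X,\ell})^{0}$ is reductive and coincides with the stabilizer of its own tensor invariants, this inclusion is equivalent to the statement that every $\ell$-adic Tate class on $X$ and its powers is a Hodge class; it thus follows from the Tate conjecture, since algebraic classes are Hodge. Here too the required cases are known for abelian varieties only partially.

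\textbf{Reduction in the hyper-K\"ahler case, and the main difficulty.} When $X$ is hyper-K\"ahler with $b_2(X)>3$, Theorem \ref{thm:HodgeHK} confines $\MT(H^*_X)$ to the image of $R\colon \CSpin(H^2_X)\to\prod_j\GL(H^j_X)$; an analogous argument, using that the Galois group acts on $H^*_{X,\ell}$ by graded-algebra automorphisms and hence normalizes the ($\ell$-adic) LLV-algebra, constrains $G(H^*_{X,\ell})^{0}$ to a subgroup of the same shape. Consequently the conjecture for the whole cohomology of $X$ reduces to the conjecture for the weight-$2$, $\mathrm{K3}$-type Hodge structure $H^2_X$, which can be attacked by transporting it, through the Kuga--Satake correspondence and its $\ell$-adic compatibility, to the first cohomology of an abelian variety and invoking the known cases of the conjecture there; this is precisely the route carried out in \cite{FFZ} for all hyper-K\"ahler varieties of known deformation type. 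The main obstacle is that none of the inclusions above is available in general: each requires the Hodge conjecture, the Tate conjecture, or an input of comparable depth, and even the reduction to $H^2_X$ only relocates the difficulty to the Mumford--Tate conjecture for abelian varieties, itself open.
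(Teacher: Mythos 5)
The statement you were asked about is stated in the paper as Conjecture~\ref{conj:mtc} and is not proven there: it is the Mumford--Tate conjecture itself, which the paper only assumes as a hypothesis (Theorems~\ref{thm:main} and~\ref{thm:Galfinitefields}) or invokes in the cases where it is known (Theorem~\ref{thm:MTCHK}, citing \cite{FFZ}; degree~$2$ for $b_2>3$ by \cite{Andre1996}). You correctly recognized this, and your sketch of the standard reductions --- the inclusion $G(H^*_{X,\ell})^{0}\subseteq \MT(H^*_X)\otimes\QQ_\ell$ from the Hodge (or André's ``motivated'') picture, the reverse inclusion from semisimplicity plus the Tate conjecture, and the reduction to $H^2_X$ via the LLV constraint of Theorem~\ref{thm:HodgeHK} and the Kuga--Satake correspondence in the hyper-Kähler case --- accurately describes both why the general statement is open and the route actually followed in \cite{FFZ} for the known deformation types. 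There is nothing to correct; there is simply no proof in the paper to compare against.
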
 
Let us note that the version of the Mumford--Tate conjecture given here is stronger than the one which says that under the comparison isomorphism $H^j_{X,\ell}\cong H^{j}_{X}\otimes_{\QQ} \QQ_{\ell}$ the group $G(H^j_{X,\ell})^{0}$ is identified with $\MT(H^j_{X}) \otimes \QQ_{\ell}$ for all $j$.

The Mumford--Tate conjecture does not depend on the base field, and it may even be formulated for varieties over the complex numbers, see \cite[\S1.6]{moonen2017}.

\subsection{} 
At present, four deformation types of complex hyper-K\"{a}hler varieties are known besides $\mathrm{K}3$ surfaces, commonly referred to as the deformation types $\mathrm{K}3^{[n]}$ and $\mathrm{Kum}^n$ (\cite{beauville1983varietes}), for all $n\geq 2$, and O'Grady's deformation types $\mathrm{OG}10 $ (\cite{O'G99}) and $\mathrm{OG}6$ (\cite{O'G03}). Together with Lie Fu and Ziyu Zhang, we have proven the Mumford--Tate conjecture for all hyper-K\"{a}hler varieties of one of these types (the case of $\mathrm{K}3^{[n]}$-type varieties was dealt with in \cite{floccari2019}).

\begin{theorem}[{\cite[Theorem 1.18]{FFZ}}]\label{thm:MTCHK}
	Let $X$ be a hyper-K\"{a}hler variety over~$K$. Assume that $X$ is deformation equivalent to one of the known examples, that is, $X_{\CC}$ is of deformation type $\mathrm{K}3^{[n]}$, $\mathrm{Kum}^n$, $\mathrm{OG}10 $ or $\mathrm{OG}6$. Then the Mumford--Tate Conjecture \ref{conj:mtc} holds for $X$.
\end{theorem}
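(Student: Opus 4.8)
The plan is to reduce the Mumford--Tate conjecture for the whole cohomology algebra to the case of an abelian variety, using the LLV-Lie algebra to propagate information from degree~$2$. The crucial input is that the Andr\'e motive $\mathcal{H}^*(X)$ of a hyper-K\"ahler variety $X$ of one of the four known types lies in the Tannakian subcategory of Andr\'e motives generated by the motive of an abelian variety $B$ (possibly after a finite extension of the base field, which does not affect the conjecture). For the degree~$2$ component this is Andr\'e's theorem: the Kuga--Satake correspondence attached to the polarized Hodge structure $H^2_X$ is motivated, hence a genuine morphism in Andr\'e's category, exhibiting $\mathcal{H}^2(X)$ inside the category generated by its Kuga--Satake variety~$B$. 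By Theorem~\ref{thm:LLV} the whole Hodge structure $H^*_X$ is built from $\tilde H^2_X$ through the representation theory of $\CSpin(H^2_X)$ --- every graded piece occurs in tensor powers of the spin representation, which is a direct summand of tensor powers of $H^1(B)$ --- and the operators realizing this (the Lefschetz operators generating $\mathfrak g(X)$) are motivated; so $\mathcal{H}^*(X)$ lies in the category generated by $B$ as well. In the type $\mathrm{Kum}^n$, which is the only one of the four having non-trivial odd cohomology, one instead takes $B$ to be (a power of) the abelian surface underlying the generalized Kummer variety, using the explicit description of $\mathcal{H}^*(\mathrm{Kum}^n)$ in terms of that surface.

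Granting this, the conjecture follows. The Mumford--Tate conjecture is known for the relevant $B$: when $B$ is the Kuga--Satake variety of a Hodge structure of $\mathrm K3$ type it is the theorem of Andr\'e and Tankeev --- one uses Zarhin's theorem that the algebra of Hodge endomorphisms of the transcendental lattice is a field, so that the Mumford--Tate group is the full special orthogonal or unitary group over that field, and then Deligne's theorem that Hodge classes on abelian varieties are absolutely Hodge, together with Bogomolov's theorem that the $\ell$-adic algebraic monodromy group contains the homotheties, forces $G(H^*_{B,\ell})^0$ to be equally large; when $B$ is an abelian surface the conjecture is classical. Since the comparison isomorphism is compatible with the Tannakian structure on Andr\'e motives, the Mumford--Tate conjecture is stable under the operations generating a Tannakian subcategory, so it holds for every object of the subcategory generated by $B$, in particular for $\mathcal{H}^*(X)$: the $\ell$-adic monodromy group and the Mumford--Tate group of $\mathcal{H}^*(X)$ are the images of those of $B$ under the same realization functors, hence correspond to one another. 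This is Conjecture~\ref{conj:mtc} for $X$.

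The subtle point is the first step for an \emph{arbitrary} member of the four deformation families: a general hyper-K\"ahler variety of type $\mathrm{OG}10$ or $\mathrm{OG}6$ is not a moduli space of sheaves on a surface, and a general one of type $\mathrm{Kum}^n$ is not literally a generalized Kummer variety, so the passage to an abelian variety is not directly visible. This is handled through the \emph{defect group} $P(X)$, a finite and essentially central group scheme measuring the failure of $\mathcal{H}^*(X)$ to be controlled by $\mathcal{H}^2(X)$ --- equivalently, the discrepancy between the kernels of $\MT(H^*_X)\to\MT(H^2_X)$ and of $G(H^*_{X,\ell})^0\to G(H^2_{X,\ell})^0$ --- which by Remarks~\ref{rem:minusone} and~\ref{rem:isogenies} is bounded by $\{\pm1\}$ and vanishes whenever the odd cohomology of $X$ does. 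One shows that $P(X)$ depends only on the deformation class: the ambient group $\CSpin(H^2_X)$ and the LLV-structure are topological, hence constant in a smooth family, and a rigidity argument shows that the discrete datum $P(X)$ cannot jump as the Hodge structure and the Galois representation vary inside it. It then suffices to verify that $\mathcal{H}^*(X)$ is of abelian type, and to compute $P(X)$, for one convenient representative of each class --- $S^{[n]}$ for a projective $\mathrm K3$ surface $S$, O'Grady's resolutions of moduli of sheaves on a $\mathrm K3$ surface and on an abelian surface, and a generalized Kummer variety $K_n(T)$ of an abelian surface $T$ --- where $B$ is explicit and $P(X)$ is computed directly: it is trivial in the first three (no odd cohomology) and equals $\{\pm1\}$ for $K_n(T)$, matching on the Hodge and $\ell$-adic sides because the odd cohomology of $K_n(T)$ is governed by $H^1(T)$, whose Mumford--Tate and $\ell$-adic monodromy groups both contain $-1$ (the latter by Bogomolov's homotheties).

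The main obstacle is thus the combination of two things: establishing that the total motive $\mathcal{H}^*(X)$ is of abelian type, which rests entirely on Andr\'e's motivated Kuga--Satake correspondence --- the only available substitute for the (unknown) algebraicity of that correspondence --- and the deformation-invariance of the defect group, which is what permits transporting the explicit computations on moduli spaces of sheaves and on generalized Kummer varieties to a general member of each of the four families.
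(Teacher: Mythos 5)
This theorem is not proved in the paper at all: it is imported verbatim from \cite{FFZ} (Theorem 1.18 there), so your sketch has to be measured against that proof. Your overall architecture --- show the total Andr\'e motive is abelian, verify things on explicit representatives, propagate by deformation, and inherit the Mumford--Tate conjecture from an abelian variety --- is the right one. But two of your key steps are wrong as stated. First, your opening claim that the LLV structure plus the motivatedness of the Lefschetz operators already places $\mathcal{H}^*(X)$ in the tannakian category generated by the Kuga--Satake variety $B$ is false: the operators $L_x,\Lambda_x$ being motivated only constrains $\Gmot(\mathcal{H}^*_X)$ to commute appropriately with $\mathfrak{g}(X)$; it does not force $\Gmot(\mathcal{H}^*_X)$ to inject into $\Gmot(\mathcal{H}^1_B)$. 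The failure of exactly this implication is what the defect group $P(X)$ measures, and its triviality is equivalent to ``Hodge classes on $X$ are motivated'' (see the Remark following Theorem \ref{thm:propertiesdefectgroup}), which is open. In \cite{FFZ} the abelianness of $\mathcal{H}^*_X$ is instead proved on explicit representatives of each deformation class (Hilbert schemes of K3 surfaces via de Cataldo--Migliorini, generalized Kummers via the abelian surface, O'Grady's resolutions of moduli spaces of sheaves) and then propagated by the deformation principle (Theorem \ref{thm:defPrin}); if your paragraph-one argument worked, your paragraph three would be superfluous.

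Second, you have conflated the defect group with the kernel of $R(\mathrm{G}_0(X))\to R^{(2)}(\mathrm{G}_0(X))$. Remarks \ref{rem:minusone} and \ref{rem:isogenies} bound the latter by $\{\pm 1\}$; they say nothing about $P(X)$, which is defined as the kernel of $\Gmot(\mathcal{H}^*_X)\to \Gmot(\mathcal{H}^1_A)$ and is a priori a positive-dimensional reductive group, not ``finite and essentially central'', and is not known to vanish even for the known deformation types. Consequently your description of how the conjecture is deduced is off: one does not need $P(X)$ finite or trivial. The actual deduction uses the splitting $\Gmot(\mathcal{H}^*_X)=P(X)\times \MT(H^*_X)$ together with the fact that, because the motive is abelian, motivated projectors are absolutely Hodge (Deligne) and hence Galois-equivariant up to finite index; the $P(X)$-factor corresponds to motives with realization a sum of copies of $\QQ(0)$, whose Galois image is therefore finite, so the connected group $G(H^*_{X,\ell})^0$ lands inside $\MT(H^*_X)\otimes\QQ_\ell$; the reverse inclusion comes from Andr\'e's degree-$2$ Mumford--Tate conjecture and the fact that $\MT(H^*_X)\to\MT(H^2_X)$ is an isogeny with kernel at most $\{\pm 1\}$. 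Your ``MTC is stable under the operations generating a tannakian subcategory'' is likewise not a formal fact: it holds here only because the motives involved are abelian, which is precisely what supplies the Galois-equivariance of the cutting projectors.
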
  

The Mumford--Tate conjecture in degree $2$ is known for arbitrary hyper-K\"{a}hler varieties $X$ with $b_2(X)>3$ by \cite{Andre1996}; see also \cite{moonen2017}.

\subsection{} 

Let now $K_1$, $K_2$ be subfields of $\CC$, finitely generated over $\QQ$, and consider hyper-K\"{a}hler varieties $X_ 1$, $X_2$ over $K_1$ and $K_2$ respectively. 

\begin{definition}
	We say that $X_1$ and $X_2$ are $H_{\ell}^*$-equivalent if there exists an isomorphism of graded algebras $H^*_{X_{1},\ell}\cong H^*_{X_{2},\ell}$ which is an isometry in degree $2$.
\end{definition} 
Note that if $X_{1}$ and $X_{2}$ are deformation equivalent then they are $H_{\ell}^*$-equivalent, since in this case the manifolds $X_{1,\CC}$ and $X_{2,\CC}$ are homeomorphic, and both the graded algebra $H^*_{X_i}$ and the Beauville-Bogomolov form on $H^2_{X_i}$ only depend on the topology of the complex manifold $X_{i,\CC}$.

\begin{proposition}\label{prop:extendisometry}
	Let $X_1$ and $X_2$ be $H_{\ell}^*$-equivalent hyper-K\"{a}hler varieties over $K_1$ and $K_2$ respectively. Assume given a subgroup $\Gamma\subset \Gal(\bar{K}_1/ K_1)$ and a homomorphism $\epsilon\colon \Gamma\to \Gal(\bar{K}_2/K_2)$; we let $\Gamma$ act on $H^*_{X_1,\ell}$ via $\sigma_{X_1}$ and on $H^*_{X_2,\ell}$ via $\epsilon\circ \sigma_{X_2}$. If there exists an isometry $f\colon H^2_{X_{1}, \ell}\to H^2_{X_2, \ell}$ which is $\Gamma$-equivariant, then there exists an isomorphism $F\colon H^*_{X_1. \ell} \to H^*_{X_2, \ell}$ of graded algebras whose degree $2$ component is again $\Gamma$-equivariant.
\end{proposition}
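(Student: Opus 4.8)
The plan is to reduce the statement to a question internal to $X_2$ and then settle it. Fix an $H^*_\ell$-equivalence $G\colon H^*_{X_1,\ell}\to H^*_{X_2,\ell}$, that is, an isomorphism of graded algebras which is an isometry in degree $2$; such a $G$ exists by hypothesis. In general $G$ is not $\Gamma$-equivariant, but $f$ is, and the failure of $G$ to agree with $f$ in degree $2$ is recorded by the isometry $\phi\coloneqq f\circ (G^{(2)})^{-1}\in\Oo(H^2_{X_2,\ell},q)$, so that $f=\phi\circ G^{(2)}$. It therefore suffices to produce a graded algebra automorphism $A$ of $H^*_{X_2,\ell}$ with $A^{(2)}=\lambda\phi$ for some $\lambda\in\QQ_\ell^{\times}$: then $F\coloneqq A\circ G$ is an isomorphism of graded algebras $H^*_{X_1,\ell}\to H^*_{X_2,\ell}$ with $F^{(2)}=\lambda\, f$, and a nonzero scalar multiple of the $\Gamma$-equivariant map $f$ is again $\Gamma$-equivariant. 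So the problem comes down to showing that every isometry of $(H^2_{X_2,\ell},q)$ is, up to a scalar, the degree $2$ component of a graded algebra automorphism of $H^*_{X_2,\ell}$.

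For an isometry $\phi$ lying in $\SO(H^2_{X_2,\ell})$ this follows from the $R$-action. By Hilbert's Theorem~90 the projection $\pi\colon\CSpin(H^2_{X_2,\ell})\to\SO(H^2_{X_2,\ell})$ is surjective on $\QQ_\ell$-points, its kernel being $\mathbb{G}_m$, so $\phi=\pi(g)$ for some $g\in\CSpin(H^2_{X_2,\ell})(\QQ_\ell)$. By Lemma~\ref{lem:byauto} the element $A\coloneqq R(g)$ is a graded algebra automorphism of $H^*_{X_2,\ell}$, and by Remark~\ref{rem:isogenies} its degree $2$ component is $A^{(2)}=\mathrm{Nm}(g)\cdot\pi(g)=\mathrm{Nm}(g)\cdot\phi$, a scalar multiple of $\phi$, as required. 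If $\det\phi=-1$ and $b_2(X_2)$ is odd, then $-\phi\in\SO(H^2_{X_2,\ell})$ and the previous case applies to $-\phi$ --- hence to $\phi$ up to the scalar $-1$; equivalently one replaces $f$ by the still $\Gamma$-equivariant isometry $-f$ and reduces to determinant $1$.

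The genuine difficulty --- the step I expect to be the main obstacle --- is the case $\det\phi=-1$ with $b_2(X_2)$ even. One then needs a single graded algebra automorphism of $H^*_{X_2,\ell}$ whose degree $2$ component has determinant $-1$: composing it with the automorphisms constructed above produces, up to a scalar, a graded algebra automorphism with any prescribed degree $2$ component in $\Oo(H^2_{X_2,\ell})$. Equivalently --- and this may be checked over $\QQ$ --- the LLV-action of $\mathrm{G}(X_2)=\Spin(\tilde{H}^2_{X_2})$ on $H^*_{X_2}$ must extend, by graded algebra automorphisms, over a group mapping onto the full orthogonal group $\Oo(\tilde{H}^2_{X_2})$; since $\dim\tilde{H}^2_{X_2}$ is even, the only point is that $H^*_{X_2}$ be stable under the outer automorphism of $\mathfrak{so}(\tilde{H}^2_{X_2})$. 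For a $\mathrm{K}3$ surface this is immediate, the cohomology ring being generated by $H^2$ together with the unit, so that any isometry of $H^2$ extends by acting trivially on $H^0$ and $H^4$; for the remaining known deformation types it can be verified from the explicit description of $H^*_{X_2}$ as a $\mathrm{G}(X_2)$-module in \cite{verbitsky1996cohomology}, \cite{looijenga1997lie}; and in general it follows from that description, or from the existence of Picard--Lefschetz monodromy operators, which act on $H^*_{X_2}$ by graded algebra automorphisms and on $H^2_{X_2}$ by a reflection of determinant $-1$. Granting this, $\phi=f\circ(G^{(2)})^{-1}$ extends up to a scalar to a graded algebra automorphism $A$ of $H^*_{X_2,\ell}$, and $F\coloneqq A\circ G$ is an isomorphism of graded algebras with $\Gamma$-equivariant degree $2$ component, as desired.
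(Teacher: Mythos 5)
Your reduction and your treatment of the special orthogonal case coincide with the paper's argument: fix a graded algebra isomorphism coming from the $H^*_\ell$-equivalence, measure the discrepancy with $f$ by an isometry of $H^2$, lift it through $\pi\colon\CSpin\to\SO$ via Hilbert's Theorem 90, and use Lemma~\ref{lem:byauto} and Remark~\ref{rem:isogenies} to correct by $R(g)$, which changes the degree~$2$ component only by the scalar $\mathrm{Nm}(g)$. Up to working on the $X_1$ side rather than the $X_2$ side, this is exactly the proof in the paper when $\det(\psi^{-1}\circ f)=1$.

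The determinant $-1$ case, however, is where your argument has a genuine gap, and it is precisely the case the paper is careful to avoid rather than to solve. Your proposed fixes do not work as stated: Picard--Lefschetz theory produces reflections on the \emph{middle} cohomology of hyperplane sections, not monodromy operators of $X_2$ acting by a determinant $-1$ reflection on $H^2_{X_2}$; whether the monodromy group $\mathrm{Mon}^2(X_2)\subset\Oo(H^2_{X_2})$ contains determinant $-1$ elements is a deformation-type-dependent fact known only for the known examples, and the claim that $H^*_{X_2}$ is stable, as a graded algebra together with its $\mathrm{G}(X_2)$-action, under the outer automorphism of $\mathfrak{so}(\tilde{H}^2_{X_2})$ is exactly the nontrivial assertion you would need to prove, not a consequence of \cite{verbitsky1996cohomology} or \cite{looijenga1997lie} that one can simply quote. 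The paper's way out is much cheaper: the conclusion only asks that $F^{(2)}$ be $\Gamma$-equivariant, not that it be a scalar multiple of the given $f$. So when $\psi^{-1}\circ f$ has determinant $-1$ one replaces $f$ by $f'=f\circ r_e$, where $r_e$ is the reflection in the first Chern class $e$ of an ample line bundle on $X_1$ (i.e.\ $e\mapsto -f(e)$ and $v\mapsto f(v)$ on $\langle e\rangle^{\perp}$). Since $e$ spans a Galois-stable line and $r_e$ depends only on that line and on $q$, the new isometry $f'$ is still $\Gamma$-equivariant, and $\psi^{-1}\circ f'$ now has determinant $1$, so the $\SO$ argument applies. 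You should incorporate this reduction; without it, or without an actual proof of your extension claim for arbitrary hyper-K\"ahler varieties, the proof is incomplete whenever $b_2(X_2)$ is even (which happens, e.g., for $\mathrm{K}3$ surfaces, $\mathrm{OG}6$ and $\mathrm{OG}10$).
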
 
\begin{proof}
	Since $X_1$ and $X_2$ are $H_{\ell}^*$-equivalent, there exists an isomorphism of graded algebras $\Psi\colon H^*_{X_{1},\ell}\to H^*_{X_{2},\ell}$ which is an isometry in degree $2$; let $\psi=\Psi^{(2)}$ be its component in degree $2$. 
	We have $\psi^{-1} \circ f \in \Oo(H^2_{X_1})(\QQ_{\ell})$. We may assume this isometry has determinant $1$, for if it has determinant $-1$ we can choose an ample line bundle on $X_1$ with first Chern class $e\in H^2_{X_1,\ell}$ and replace~$f$ with the isometry given by $e\mapsto -f(e)$ and $v\mapsto f(v)$ for any $v\in \langle e \rangle^{\bot}$, which is again $\Gamma$-equivariant.
	
	The morphism $\pi\colon \CSpin(H^2_{X_1})\to \SO(H^2_{X_1})$ is surjective on $\QQ_{\ell}$-points. Indeed, by Hilbert's theorem 90 (\cite[Chapter~X, \S1]{serrelocalfields}), the short exact sequence 
	\[
	1\to \mathbb{G}_m \to \CSpin(H^2_{X_1}) \to \SO(H^2_{X_1}) \to 1
	\]
	yields a short exact sequence
	\[
	1\to \QQ_{\ell}^{\times} \to \CSpin(H^2_{X_1})(\QQ_{\ell}) \to \SO(H^2_{X_1})(\QQ_{\ell}) \to 1.
	\] 
	Therefore, there exists $g\in \CSpin(H^2_{X_1})(\QQ_{\ell})$ such that $\pi(g)=\psi^{-1}\circ f$.  By Lemma \ref{lem:byauto}, $R(g)$ is a graded algebra automorphism of $H^*_{X_1, \ell}$. It follows that $F\coloneqq \Psi \circ R(g)\colon H^*_{X_1, \ell}\to H^*_{X_2,\ell}$ is an isomorphism of graded algebras. By Remark~\ref{rem:isogenies}, the degree $2$ component $F^{(2)}$ of $F$ is $\mathrm{Nm}(g) \cdot f$, and hence it is $\Gamma$-equivariant.  
\end{proof}

We can now prove Theorem \ref{thm:GalRepHK}, whose statement is recalled below.
\begin{theorem}\label{thm:main}
	Let $K_1, K_2$ be subfields of $\CC$, finitely generated over $\QQ$, and let~$X_i$ be a hyper-K\"{a}hler variety over $K_i$, for $i=1,2$. Assume that $X_1$ and $X_2$ are $H_{\ell}^*$-equivalent and that the Mumford--Tate conjecture holds for both of them. Let $\Gamma\subset \Gal(\bar{K}_1/K_1)$ be a subgroup, let $\epsilon\colon \Gamma\to \Gal(\bar{K}_2/K_2)$ be a homomorphism and let $f\colon H^2_{X_1, \ell} \to H^2_{X_2, \ell}$ be a $\Gamma$-equivariant isometry.
	Then, there exist a subgroup $\Gamma'\subset \Gamma$ of finite index and a $\Gamma'$-equivariant isomorphism of graded algebras $F\colon H^*_{X_1, \ell}\to H^*_{X_2, \ell}$.
\end{theorem}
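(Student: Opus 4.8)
The plan is to upgrade the degree-$2$ information into a statement about the full cohomology algebra by combining Proposition~\ref{prop:extendisometry} with the Mumford--Tate conjecture. First I would apply Proposition~\ref{prop:extendisometry} to obtain an isomorphism of graded algebras $F\colon H^*_{X_1,\ell}\to H^*_{X_2,\ell}$ whose degree~$2$ component $F^{(2)}$ is still $\Gamma$-equivariant; note that $F^{(2)}=\mathrm{Nm}(g)\cdot f$ differs from $f$ only by a scalar. The task is then to show that, after passing to a finite-index subgroup $\Gamma'$ of $\Gamma$, the map $F$ itself (not just its degree-$2$ part) becomes $\Gamma'$-equivariant. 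The key is that $F$ conjugates $\mathfrak{g}(X_1)$ to $\mathfrak{g}(X_2)$ by Lemma~\ref{lem:functoriality}, hence conjugates $R(\mathrm{G}_0(X_1))$ to $R(\mathrm{G}_0(X_2))$ since $F$ respects the grading; so $F$ intertwines the two $R$-actions of $\CSpin$-groups up to this identification.

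Next I would bring in the Galois groups. For $i=1,2$ the Zariski closure $G(H^*_{X_i,\ell})$ of the image of $\sigma_{X_i}$ has identity component $G(H^*_{X_i,\ell})^0$, and by the Mumford--Tate conjecture (Conjecture~\ref{conj:mtc}) this equals $\MT(H^*_{X_i})\otimes\QQ_\ell$, which by Theorem~\ref{thm:HodgeHK} lies in $R(\mathrm{G}_0(X_i))$. Thus, after replacing $K_i$ by a finite extension so that $G(H^*_{X_i,\ell})$ is connected, the Galois action on $H^*_{X_i,\ell}$ factors through $R(\mathrm{G}_0(X_i))(\QQ_\ell)$. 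On the degree-$2$ piece the hypothesis says $f$ is $\Gamma$-equivariant, and the isomorphism $F_*$ restricts to $F^{(2)}_*$ on the $\SO(H^2)$-quotients; the point is that $F^{(2)}_*$ carries the image of the degree-$2$ Galois representation of $X_1$ to that of $X_2$. Since by Remark~\ref{rem:isogenies} the projection $R(\mathrm{G}_0(X_i))\to R^{(2)}(\mathrm{G}_0(X_i))$ is either an isomorphism or a central isogeny with kernel $\{\pm 1\}$, the map $F_*$ and the datum that $F^{(2)}$ is $\Gamma$-equivariant together force $F$ to be $\Gamma$-equivariant \emph{up to a homomorphism} $\Gamma\to\{\pm1\}$ (a quadratic character); here is where the finite-index subgroup $\Gamma'$ — the kernel of that character — enters.

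Concretely, I would argue as follows. For $\gamma\in\Gamma$, both $F^{-1}\circ\sigma_{X_2}(\epsilon(\gamma))\circ F$ and $\sigma_{X_1}(\gamma)$ are elements of $R(\mathrm{G}_0(X_1))(\QQ_\ell)$ whose images under the isogeny to $R^{(2)}(\mathrm{G}_0(X_1))(\QQ_\ell)$ coincide, because $F^{(2)}$ intertwines the degree-$2$ actions thanks to $\Gamma$-equivariance of $f$ and the fact that $F^{(2)}$ and $f$ agree up to a Galois-fixed scalar. Hence $\gamma\mapsto F^{-1}\sigma_{X_2}(\epsilon(\gamma))F\cdot\sigma_{X_1}(\gamma)^{-1}$ takes values in the kernel $\{\pm1\}$ of that isogeny, defining a character $\chi\colon\Gamma\to\{\pm1\}$; it is continuous, so $\Gamma'\coloneqq\ker\chi$ has index at most $2$ in $\Gamma$, and $F$ is $\Gamma'$-equivariant. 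When $X_i$ has no odd cohomology the isogeny is an isomorphism and one can take $\Gamma'=\Gamma$.

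The main obstacle I expect is bookkeeping around the scalar ambiguities and the two actions: one must check carefully that $F^{(2)}_*\colon R^{(2)}(\mathrm{G}_0(X_1))\to R^{(2)}(\mathrm{G}_0(X_2))$ really does match the degree-$2$ Galois images (which is where the hypothesis that $f$ is an \emph{isometry}, not merely a linear isomorphism, is used, via Remark~\ref{rem:fujiki1} and Lemma~\ref{lem:functoriality}), and that the discrepancy between $F$ and $\Gamma$-equivariance is genuinely central — i.e.\ lands in the finite kernel $C$ rather than in a positive-dimensional subgroup. This is the $\ell$-adic analogue of the argument proving Proposition~\ref{prop:hodge}, with $\mathbb{S}$ replaced by the (pro-finite, hence "connected enough" only after passing to $\Gamma'$) Galois group, and the failure of connectivity is exactly what costs us the finite-index subgroup.
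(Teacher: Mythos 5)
Your proposal is correct and follows essentially the same route as the paper: Proposition~\ref{prop:extendisometry} to produce $F$ with $\Gamma$-equivariant degree-$2$ part, the Mumford--Tate conjecture plus Theorem~\ref{thm:HodgeHK} to factor the Galois representations through $R\bigl(\mathrm{G}_0(X_i)\bigr)(\QQ_\ell)$ after a finite extension, and then the central-isogeny argument of Remark~\ref{rem:isogenies} producing a character $\chi\colon\Gamma\to C(\QQ_\ell)$ whose kernel is the desired $\Gamma'$. Your additional observations (that the index is at most $2$ and that $\Gamma'=\Gamma$ works when the odd cohomology vanishes) are consistent refinements of the same argument.
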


\begin{proof}
	Replacing $K_i$ by a finite field extension if necessary, we may assume that $G(H^*_{X_i,\ell})$ is connected for $i=1,2$. Since the Mumford--Tate conjecture holds for $X_i$, by Theorem~\ref{thm:HodgeHK} the representation $\sigma_{X_i}\colon \Gal(\bar{K}_i/K_i)\to \GL(H^*_{X_i,\ell})$ factors through the $\QQ_{\ell}$-points of the image of the LLV-representation $R\colon \mathrm{G}_0({X_i}) \to \prod_j \GL(H^j_{X_i})$.
	
	By Proposition \ref{prop:extendisometry} there exists an isomorphism $F\colon H^*_{X_1, \ell}\to H^*_{X_2, \ell} $ of graded algebras whose degree $2$ component $F^{(2)}$ is $\Gamma$-equivariant. 
	Now the argument is the same as in the proof of Proposition \ref{prop:hodge}. We consider the isomorphism $F_*\colon \GL(H^*_{X_1,\ell})\to \GL(H^*_{X_2,\ell})$ given by $A\mapsto FAF^{-1}$, and the analogous isomorphism $F^{(2)}_*\colon \GL(H^2_{X_1,\ell})\to \GL(H^2_{X_2,\ell})$. 
	By Lemma \ref{lem:functoriality} the isomorphism $F_*$ restricts to an isomorphism $R\bigl({\mathrm{G}}_0(X_1)\bigr)(\QQ_{\ell})\cong R\bigl({\mathrm{G}}_0(X_2)\bigr)(\QQ_{\ell})$.
	
	We consider the diagram
	\[
	\begin{tikzcd}
	& \Gamma \arrow[hook']{dl} \arrow{dr}{\epsilon} \\
	\Gal(\bar{K}_1/K_1) \arrow{d}{\sigma_{X_1}} && \Gal(\bar{K}_2/K_2) \arrow{d}{\sigma_{X_2}} \\ 
	R\bigl({\mathrm{G}}_0(X_1)\bigr)(\QQ_{\ell}) \arrow{d}{\mathrm{pr}_1}  \arrow{rr}{F_*}  && R\bigl({\mathrm{G}}_0(X_2)\bigr)(\QQ_{\ell})  \arrow{d}{\mathrm{pr}_2} \\
	R^{(2)}\bigl({\mathrm{G}}_0(X_1)\bigr)(\QQ_{\ell}) \arrow{rr}{ F^{(2)}_* } && R^{(2)}\bigl({\mathrm{G}}_0(X_2)\bigr)(\QQ_{\ell})
	\end{tikzcd}
	\]
	We have to show that, up to replacing $\Gamma$ by a subgroup of finite index, this diagram commutes.
	
	Since $F^{(2)}$ is $\Gamma$-equivariant by assumption, we have 
	$
	F_*^{(2)} \circ \mathrm{pr}_1 \circ \sigma_{X_1} = \mathrm{pr}_2 \circ \sigma_{X_2} \circ \epsilon	$.
	By Remark \ref{rem:isogenies}, the homomorphism $\mathrm{pr}_2\colon R\bigl({\mathrm{G}}_0(X_2)\bigr) \to R^{(2)}\bigr({\mathrm{G}}_0(X_2)\bigr)$ is either an isomorphism or a central isogeny of degree $2$; let $C$ be its kernel. Then there exists a homomorphism $\chi\colon \Gamma \to C(\QQ_{\ell})$ such that $
	F_*\circ \sigma_{X_1}(\gamma) = \chi(\gamma) \cdot \sigma_{X_2}(\gamma)
	$
	for any $\gamma\in \Gamma$. The kernel $\Gamma'\subset \Gamma$ of $\chi$ is a subgroup of finite index such that $F$ is $\Gamma'$-equivariant.
\end{proof}
\begin{remark}
	Note that in the above proof we have only used that $G(H^*_{X_i,\ell})^0\subset \MT(H^*_{X_i})(\QQ_{\ell})$, so we only need one of the two inclusions predicted by the Mumford--Tate conjecture. 
\end{remark}

\subsection{}\label{subsec:setting}
We apply Theorem \ref{thm:main} to the study of Galois representations on the cohomology of hyper-K\"{a}hler varieties over finite fields. We will consider the following situation. Let $k$ be a finite field, and let $Z_1$ and $Z_2$ be smooth projective varieties over $k$. We assume that there exist hyper-K\"{a}hler varieties $X_1$ and~$X_2$ over fields of characteristic $0$ which lift $Z_1$ and $Z_2$. More precisely, we assume that there exist:
\begin{itemize}
	\item normal integral domains $R_i\subset \CC$ essentially of finite type over $\ZZ$ (i.e.~the localization of a finite type algebra over $\ZZ$) with fraction fields $K_i$ of characteristic~$0$;
	\item smooth and projective morphisms $\mathfrak{X}_i\to \Spec(R_i)$ whose generic fibres $X_i$ are hyper-K\"{a}hler;
	\item homomorphisms $R_i\to k$ together with isomorphisms $\mathfrak{X}_i\otimes _{R_i} k\cong Z_i$ of $k$-schemes.
\end{itemize}

We let $\ell$ be a prime number invertible in $k$ and consider $H^*_{Z_i, \ell}=\bigoplus_j H^{j}_{\et}(Z_{i, \bar{k}}, \QQ_{\ell})$. By the smooth and proper base-change theorems we have an isomorphism $H^*_{X_i, \ell}\cong H^*_{Z_i, \ell}$ of graded algebras. Via this isomorphism, the Beauville-Bogomolov form induces a non-degenerate symmetric bilinear form on $H^2_{Z_i, \ell}$.

\begin{remark}\label{rem:BBform}
	A priori, the bilinear form that we obtain on $H^2_{Z_i, \ell}$ depends on the choices of $R_i$ and $\mathfrak{X}_i$. However, by~\cite[\S4]{Huy06}, the formula
	\[
	\alpha \mapsto \int_{X_{i}} \alpha^2 \wedge \sqrt{\mathrm{td}(X_i)}
	\]
	defines a non-degenerate quadratic form on $H^2_{X_i}$ which is a non-zero multiple of the Beauville-Bogomolov form. The induced form on $H^2_{Z_i,\ell}$ is $\alpha \mapsto \int_{Z_{i}} \alpha^2 \wedge \sqrt{\mathrm{td}(Z_i)}$, and it is thus independent from the choices of $R_i$ and $\mathfrak{X}_i$.
\end{remark}

We now prove Theorem \ref{thm:GalRep2}.

\begin{theorem}\label{thm:Galfinitefields}
	With notations and assumptions as above, assume that $X_1$ and~$X_2$ are $H_{\ell}^*$-equivalent, and that the Mumford--Tate conjecture holds for both of them. Let $f\colon H^2_{Z_1, \ell} \to H^2_{Z_2, \ell}$ be a $\Gal(\bar{k}/k)$-equivariant isometry. Then, there exist a finite field extension $k'$ of $k$ and a $\Gal(\bar{k}/k')$-equivariant isomorphism of graded algebras $F\colon H^*_{Z_1, \ell} \to H^*_{Z_2, \ell}$.
\end{theorem}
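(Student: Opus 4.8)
The plan is to reduce Theorem \ref{thm:Galfinitefields} to the situation handled by Theorem \ref{thm:main} by spreading out the Galois action on $Z_i$ to the generic fibre $X_i$ and then specializing back. The key point is that the absolute Galois group of $k$ is (topologically) generated by the Frobenius, and the Frobenius conjugacy class lifts to a decomposition group in the arithmetic fundamental group of $\Spec R_i$; via this lift one obtains a \emph{common} group $\Gamma$ acting compatibly on $H^*_{X_1,\ell}$ and $H^*_{X_2,\ell}$ together with an isometry in degree $2$, to which Theorem \ref{thm:main} applies.

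First I would set up the comparison. After shrinking $\Spec R_i$ (inverting $\ell$ and finitely many primes, and passing to a connected \'etale neighbourhood of the chosen point $\Spec k \to \Spec R_i$), the smooth proper morphism $\mathfrak X_i \to \Spec R_i$ has lisse $\ell$-adic cohomology sheaves, and there is a specialization isomorphism $H^*_{X_i,\ell} \cong H^*_{Z_i,\ell}$ compatible with the action of $\pi_1^{\et}(\Spec R_i)$: the former carries the arithmetic monodromy, and a Frobenius element $\mathrm{Fr}_{\mathfrak p_i}$ over the point $\mathfrak p_i \subset R_i$ corresponding to $R_i \to k$ maps to the geometric Frobenius in $\Gal(\bar k/k)$ acting on $H^*_{Z_i,\ell}$. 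Here I would use that the decomposition group at $\mathfrak p_i$ surjects onto $\Gal(\bar k/k)$, so one can choose a section $s_i\colon \Gal(\bar k/k) \to \pi_1^{\et}(\Spec R_i)$ over the procyclic group $\Gal(\bar k/k)$ (lift the Frobenius; this is where procyclicity is essential), and then compose with $\pi_1^{\et}(\Spec R_i) \to \Gal(\bar K_i/K_i)$ after choosing geometric points — more precisely, one gets a homomorphism $\Gal(\bar k/k) \to \Gal(\bar K_i/K_i)$ well-defined up to inertia, which is all that is needed because inertia acts trivially on the lisse sheaf.

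Next I would package this as an instance of Theorem \ref{thm:main}. Take $\Gamma \coloneqq \Gal(\bar k/k)$ (a procyclic, hence in particular a well-behaved, group), let it act on $H^*_{X_1,\ell}$ through $s_1$ composed with $\sigma_{X_1}$, and define $\epsilon\colon \Gamma \to \Gal(\bar K_2/K_2)$ via $s_2$; the corresponding action on $H^*_{X_2,\ell}$ is then, under the specialization isomorphism, the given $\Gal(\bar k/k)$-action on $H^*_{Z_2,\ell}$. The hypothesis gives a $\Gal(\bar k/k)$-equivariant isometry $f\colon H^2_{Z_1,\ell} \to H^2_{Z_2,\ell}$, which under the two specialization isomorphisms becomes a $\Gamma$-equivariant isometry $H^2_{X_1,\ell} \to H^2_{X_2,\ell}$. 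Since $X_1,X_2$ are $H^*_\ell$-equivalent and the Mumford--Tate conjecture holds for both, Theorem \ref{thm:main} yields a finite-index subgroup $\Gamma' \subset \Gamma$ and a $\Gamma'$-equivariant isomorphism of graded algebras $F\colon H^*_{X_1,\ell} \to H^*_{X_2,\ell}$. A finite-index subgroup of the procyclic group $\Gal(\bar k/k)$ is $\Gal(\bar k/k')$ for a finite extension $k'/k$, and transporting $F$ back along the specialization isomorphisms gives the desired $\Gal(\bar k/k')$-equivariant isomorphism $H^*_{Z_1,\ell} \to H^*_{Z_2,\ell}$.

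The main obstacle I anticipate is bookkeeping with base points and the mismatch between decomposition groups: the lift $s_i$ of Frobenius is only canonical modulo inertia, and one must check that this indeterminacy is harmless — it is, because after shrinking $\Spec R_i$ the cohomology sheaves are lisse so inertia at $\mathfrak p_i$ acts trivially, but stating this cleanly requires being careful that the two varieties $Z_1$, $Z_2$ live over the \emph{same} finite field $k$ while $X_1$, $X_2$ a priori live over different fields $K_1 \ne K_2$, so there is genuinely no single geometric setup and the argument must proceed through the two separate families glued only along $\Gamma = \Gal(\bar k/k)$. Once the indeterminacy is absorbed and the compatibility of Frobenius with the specialization isomorphism is invoked (standard smooth–proper base change plus the definition of the monodromy action), the rest is a direct application of Theorem \ref{thm:main}, and the passage from a finite-index subgroup of $\Gal(\bar k/k)$ to a finite extension $k'/k$ is immediate.
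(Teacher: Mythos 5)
Your proposal is correct and follows essentially the same route as the paper: lift the Frobenius of $k$ to a Frobenius element $\phi_i\in\Gal(\bar K_i/K_i)$ at the prime $\mathfrak m_i=\ker(R_i\to k)$, use smooth--proper base change to identify its action on $H^*_{X_i,\ell}$ with that of $\mathrm{Fr}_k$ on $H^*_{Z_i,\ell}$, and apply Theorem \ref{thm:main} with $\Gamma$ built from these lifts and $\epsilon$ matching them, finally converting the finite-index subgroup $\Gamma'$ into a finite extension $k'/k$. The only (immaterial) difference is that the paper takes $\Gamma$ to be the discrete cyclic group $\langle\phi_1^{a_1}\rangle\cong\ZZ$ rather than all of $\Gal(\bar k/k)$ embedded via a profinite section, which sidesteps the need for a continuous splitting of the decomposition group.
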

\begin{proof}
	Let $\lvert k \rvert = p^r$, and let $\mathrm{Fr}_k\in \Gal(\bar{k}/k)$ be the Frobenius automorphism. With notations as in \S\ref{subsec:setting}, let $\mathfrak{m}_i\subset R_i$ be the kernel of $R_i\to k$; let $\lvert R_i/\mathfrak{m}_i\rvert= p^{r/a_i}$ and denote by $\phi_i\in \Gal(\bar{K}_i/K_i)$ a Frobenius element at $\mathfrak{m}_i$, for $i=1,2$. 
	
	By construction, we have isomorphisms $\langle \phi_i^{a_i} \rangle = \langle \mathrm{Fr}_k \rangle$ (both isomorphic to $\mathbb{Z}$) such that the action of $\phi^{a_i}$ on $H^*_{Z_i, \ell}$ via the base-change isomorphism $H^*_{X_i, \ell} \cong H^*_{Z_i, \ell}$ corresponds to that of $\mathrm{Fr}_k$.
	
	Let now $\Gamma= \langle \phi_1^{a_1}\rangle \subset \Gal(\bar{K}_1/K)$ and let $\epsilon\colon \Gamma\to \Gal(\bar{K}_2/K_2)$ be the homomorphism such that $\phi_1^{a_1}\mapsto \phi_2^{a_2}$. By Theorem \ref{thm:main}, there exists an integer $m$ and an isomorphism $H^*_{Z_1, \ell} \to H^*_{Z_2, \ell}$ of graded algebras which is $\mathrm{Fr}^{m}_k$-equivariant.
\end{proof}

\section{Motives}\label{sec:motivic}

\subsection{} Let $K\subset \CC$ be a subfield. We will work with the category of motives over $K$ introduced by Andr\'{e} in \cite{andre1996Motives}, which we denote by $\mathrm{AM}_K$. It is a neutral tannakian semisimple abelian category, with functors 
\[
\mathrm{SmProj}^{\mathrm{op}}_K \xrightarrow{\mathcal{H}^j} \mathrm{AM}_K \xrightarrow{r} \mathrm{HS}_{\QQ}^{\mathrm{pol}}
\]
such that $r\circ \mathcal{H}^{j}$ is the functor $H^{j}$ which associates to a smooth and projective variety~$X$ the rational Hodge structure~$H^{j}_X$. We denote by $\mathcal{H}^*_X\coloneqq \bigoplus_j \mathcal{H}^{j}_X$ the motive of the smooth and projective variety $X$.

The composition of the realization functor $r$ with the forgetful functor to $\QQ$-vector spaces is a fibre functor on $\mathrm{AM}_K$; the tensor automorphisms of this functor form a pro-reductive group $\mathcal{G}_{\mathrm{mot}}(\mathrm{AM}_K)$ over $\QQ$, and $\mathrm{AM}_K$ is equivalent to the category of finite dimensional $\QQ$-representations of $\mathcal{G}_{\mathrm{mot}}(\mathrm{AM}_K)$.


We denote by $\langle \mathcal{M} \rangle$ the tannakian subcategory of $\mathrm{AM}_K$ generated by a motive $\mathcal{M}$. Via restriction to $\langle \mathcal{M}\rangle$ of the fibre functor described above, we obtain a reductive algebraic group $\Gmot(\mathcal{M})\subset \GL\bigl(r(\mathcal{M})\bigr)$ whose category of representations is equivalent to $\langle \mathcal{M} \rangle$; we have a canonical surjective homomorphism 
$ \mathcal{G}_{\mathrm{mot}}(\mathrm{AM}_K)\to \Gmot(\mathcal{M})$. 
If $\mathcal{M}$, $\mathcal{N}\in\mathrm{AM}_K$, a linear isomorphism $f\colon r(\mathcal{M})\to  r(\mathcal{N})$ between their realizations comes from an isomorphism of motives if and only if it is $\mathcal{G}_{\mathrm{mot}}(\mathrm{AM}_K)$-equivariant.

\begin{remark} \label{rem:gmotFields}
	If $K'$ is any field extension of $K$, the group $\Gmot(\mathcal{M}_{K'})$ is a subgroup of finite index of $\Gmot(\mathcal{M})$, and there exists a finite field extension $K^{\dagger}$ of $K$ such that for any extension $K'/K^{\dagger}$ we have $\Gmot(\mathcal{M}_{K'})=\Gmot(\mathcal{M}_{K^{\dagger}})$, see \cite[\S3.1]{moonen17}. In particular $\Gmot(\mathcal{M}_{\CC})$ is a subgroup of finite index of $\Gmot(\mathcal{M})$.
\end{remark}



\subsection{} 
We will need to work with motives in families. For our purposes, it will be sufficient to treat the case in which the base field is $\mathbb{C}$. Let $S$ be a non-singular and connected complex quasi-projective variety. We will use the category $\mathrm{AM}/S$ of families of motives over $S$ defined by Moonen \cite{moonen17}. The category $\mathrm{AM}/S$ is a semisimple, neutral tannakian, abelian category, and there is a realization functor $r\colon \mathrm{AM}/S\to \mathrm{VHS}_{\QQ}/S$ to polarized variations of $\QQ$-Hodge structures over $S$.


The prototype of an object of $\mathrm{AM}/S$ is the motive $\mathcal{H}^*_{X/S}=\bigoplus_j \mathcal{H}^{j}_{X/S}$ of a smooth and projective morphism $\mathfrak{X}\to S$.
For any $s\in S$ the fibre of $\mathcal{H}^*_{\mathfrak{X}/S}$ at $s$ is the motive~$\mathcal{H}^*_{\mathfrak{X}_s}$ of the fibre. The realization of the motive $\mathcal{H}^*_{X/S}$ is the variation of Hodge structures $H^*_{\mathfrak{X}/S}$ over~$S$. In general, an object $\mathcal{M}/S\in \mathrm{AM}/S$ is cut out inside some $\mathcal{H}^*_{\mathfrak{X}/S}$ as above by a global section $p$ of the local system $\End\bigl(r(M)/S\bigr)$ such that $p\circ p=p$ and $p_s$ is a motivated cycle for all $s\in S$.



Let $\mathcal{M}/S\in \mathrm{AM}/S$ and let $M/S\in \mathrm{VHS}_{\QQ}/S$ be its realization. We denote by $\mathcal{M}_s$ and $M_s$ the fibre at $s$ of $\mathcal{M}/S$ and $M/S$ respectively.
Then there are local systems of algebraic groups over $S$
\[
G_{\mathrm{mono}}(M/S)^0 \trianglelefteq \MT(M/S) \subset \Gmot(\mathcal{M}/S) \subset \GL(M/S),
\]
where: 
\begin{itemize}
	\item $G_{\mathrm{mono}}(M/S)^0$ is the connected monodromy group: for each $s \in S $ the fibre $G_{\mathrm{mono}}(M/S)^0_s$ is the identity component of the Zariski closure of the image of the monodromy representation $ \pi_1(S,s) \to \GL(M_s)$;
	\item $\MT(M/S)$ is the generic Mumford--Tate group: for each $s\in S$ we have an inclusion $\MT(M_s)\subset \MT(M/S)_s$, and equality holds for very general~$s$, that is, for $s$ in the complement of a countable union of closed subvarieties of $S$;
	\item $\Gmot(\mathcal{M}_{s})$ is the generic motivic Galois group: for each $s\in S$ we have an inclusion $\Gmot(\mathcal{M}_{s})\subset \Gmot(\mathcal{M}/S)_s$, and equality holds for very general~$s$.
\end{itemize}

The key result needed to develop the above theory is Andr\'{e}'s deformation principle for motivated cycles \cite[Th\'eor\`eme 0.5]{andre1996Motives}.

\subsection{}
We now review some of the results on the Andr\'{e} motives of hyper-K\"{a}hler varieties which we obtained in \cite{FFZ}. To any complex hyper-K\"{a}hler variety $X$ with $b_2(X)>3$ we attached an algebraic group 
\[
P(X) \subset \Gmot(\mathcal{H}_X),
\]	
called the defect group of $X$, that is defined as follows. 

If the odd cohomology of $X$ is trivial, then $P(X)$ is simply defined as the kernel of the projection $\Gmot(\mathcal{H}^*_X) \to \Gmot(\mathcal{H}^2_X)$ corresponding to $\mathcal{H}^2_X\subset \mathcal{H}^*_{X}$. 

If instead $X$ has non-trivial cohomology in some odd degree, we need to consider the abelian variety $A$ obtained from~$H^2_X$ via the Kuga-Satake construction (\cite{deligne1971conjecture}) in order to control the odd part of the cohomology. In the case when $X$ has non-trivial odd cohomology we will assume the following:

\begin{assumption}\label{ass:ODD}
	The motive $\mathcal{H}^1_A$ belongs to the tannakian category of motives generated by $\mathcal{H}^*_X$.
\end{assumption}
 
 We then define the defect group  $P(X)$ as the kernel of the induced projection $\Gmot(\mathcal{H}^*_X)\to \Gmot(\mathcal{H}^1_{A})$. 
 We expect Assumption \ref{ass:ODD} to be always satisfied; for the only known class of hyper-K\"{a}hler varieties with odd cohomology, that is, the deformation class of generalized Kummer varieties, we verified this in \cite{FFZ}. In general, contrarily to what claimed in \textit{loc. cit.}, the question remains open, see the Erratum \cite{ErrataFFZ} where we show that this assumption is not necessary to define the defect group; Assumption~\ref{ass:ODD} may be translated into a question on the representation of the LLV-Lie algebra on the odd cohomology.
 
By construction, $P(X)$ acts on $H^*_X$ via graded algebra automorphisms and it acts trivially on $H^2_X$. We summarize below the main properties of the defect group $P(X)$, which are proven in \cite[Lemma 6.8, Theorems 6.9 and 6.12]{FFZ}.

\begin{theorem}\label{thm:propertiesdefectgroup}
	Let $X$ be a complex hyper-K\"{a}hler variety with $b_2(X)>3$. If $X$ has non-trivial cohomology in odd degree, assume that it satisfies Assumption \ref{ass:ODD}. 
	Then:
	\begin{enumerate}[label=(\alph*)]
		\item the action of $P(X)$ on $H^*_X$ commutes with the LLV-Lie algebra $\mathfrak{g}(X)$;
		\item we have $\Gmot(\mathcal{H}*_X)= P(X)\times \MT(H^*_X)$; 
		\item if $\mathfrak{X}\to S$ is a smooth and projective morphism to a non-singular connected variety~$S$ with fibres hyper-K\"{a}hler varieties with $b_2>3$, then $s\mapsto P(\mathfrak{X}_s)$ defines a local system of algebraic groups $P(\mathfrak{X}/S)\subset \GL(H^*_{\mathfrak{X}/S})$ over $S$.
	\end{enumerate}
\end{theorem}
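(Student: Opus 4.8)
The plan is to deduce parts (a) and (b) formally from the motivic nature of the cohomology ring together with the LLV-description of Theorem~\ref{thm:LLV}, and then to globalise these arguments over~$S$ for~(c). For (a), the key input is that the cup product $\mathcal{H}^*_X\otimes\mathcal{H}^*_X\to\mathcal{H}^*_X$ is a morphism of motives (pull-back along the diagonal) and that the K\"unneth projectors $\pi_j\colon\mathcal{H}^*_X\to\mathcal{H}^j_X$ are motivated, so that every element of $\Gmot(\mathcal{H}^*_X)$ acts on $H^*_X$ by a graded algebra automorphism commuting with $\theta=\sum_j(j-2n)\pi_j$. Given $p\in P(X)$, which acts trivially on $H^2_X$ by construction, one then has $p(L_x\alpha)=p(x)\cup p(\alpha)=x\cup p(\alpha)=L_x(p\alpha)$ for all $x\in H^2_X$ and $\alpha\in H^*_X$, so $p$ commutes with each $L_x$; if moreover $x$ has the Lefschetz property, conjugating the $\mathfrak{sl}_2$-triple $(L_x,\theta,\Lambda_x)$ by $p$ and using the uniqueness of $\Lambda_x$ \cite{giraud1968} gives $p\Lambda_x p^{-1}=\Lambda_x$. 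Since $\mathfrak{g}(X)$ is generated by these triples, $P(X)$ centralises $\mathfrak{g}(X)$.

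For (b), I would show that $P(X)$ and $\MT(H^*_X)$ commute, intersect trivially, and generate $\Gmot(\mathcal{H}^*_X)$. They commute by (a), because $\MT(H^*_X)$ is contained in the connected group $R(\mathrm{G}_0(X))$ by Theorem~\ref{thm:HodgeHK}, whose Lie algebra lies in $\mathfrak{g}(X)+\QQ\cdot\mathrm{id}_{H^*_X}$, which $P(X)$ centralises. For the other two points, note that $P(X)$ is by definition the kernel of the surjection of $\Gmot(\mathcal{H}^*_X)$ onto $\Gmot(\mathcal{H}^2_X)$ — or onto $\Gmot(\mathcal{H}^1_A)$ when $X$ has odd cohomology, which is defined using Assumption~\ref{ass:ODD} and is surjective because $\mathcal{H}^1_A$, hence also $\mathcal{H}^2_X$ through the motivated Kuga--Satake correspondence, lies in $\langle\mathcal{H}^*_X\rangle$. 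The composite $\MT(H^*_X)\hookrightarrow\Gmot(\mathcal{H}^*_X)\twoheadrightarrow\Gmot(\mathcal{H}^2_X)$ (resp.\ $\Gmot(\mathcal{H}^1_A)$) is an isomorphism: it is surjective since its image is $\MT(H^2_X)$ (resp.\ $\MT(H^1_A)$), which coincides with $\Gmot(\mathcal{H}^2_X)$ (resp.\ $\Gmot(\mathcal{H}^1_A)$) by Andr\'e's results for K3-type and abelian motives \cite{Andre1996}, and it is injective by Theorem~\ref{thm:HodgeHK} and Remark~\ref{rem:isogenies} — here, in the presence of odd cohomology, one must use $A$ rather than $H^2_X$, because the projection $\tilde{\rho}_0^{(2)}$ has a kernel of order two while the $R$-action is faithful by Remark~\ref{rem:minusone}. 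The trivial intersection $P(X)\cap\MT(H^*_X)=1$ and the equality $P(X)\cdot\MT(H^*_X)=\Gmot(\mathcal{H}^*_X)$ then follow, and with the commutation this gives $\Gmot(\mathcal{H}^*_X)=P(X)\times\MT(H^*_X)$.

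For (c), I would carry the same constructions out in Andr\'e's category $\mathrm{AM}/S$ of motives over~$S$. The cup product and the K\"unneth projectors are available in families; the Kuga--Satake abelian scheme attached to $H^2_{\mathfrak{X}/S}$ exists, and Assumption~\ref{ass:ODD} holds fibrewise over all of $S$ thanks to the deformation principle of Theorem~\ref{thm:defPrin}. One thus gets a surjection of local systems of algebraic groups $\Gmot(\mathcal{H}^*_{\mathfrak{X}/S})\twoheadrightarrow\Gmot(\mathcal{H}^2_{\mathfrak{X}/S})$ (resp.\ onto $\Gmot(\mathcal{H}^1_{A/S})$), whose kernel $P(\mathfrak{X}/S)\subset\GL(H^*_{\mathfrak{X}/S})$ is a local system of algebraic subgroups. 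The inclusion $P(\mathfrak{X}_s)\subseteq P(\mathfrak{X}/S)_s$ is clear from $\Gmot(\mathcal{H}^*_{\mathfrak{X}_s})\subseteq\Gmot(\mathcal{H}^*_{\mathfrak{X}/S})_s$. For the reverse inclusion I would observe that any $g\in P(\mathfrak{X}/S)_s$ acts trivially on $H^2_{\mathfrak{X}_s}$ and centralises $\mathfrak{g}(\mathfrak{X}_s)$ by the computation in (a), and then use the deformation principle together with the LLV-decomposition of $H^*_{\mathfrak{X}_s}$ under $\mathfrak{g}(\mathfrak{X}_s)$ and part~(b) to conclude that such a $g$ fixes every motivated cycle of $\mathcal{H}^*_{\mathfrak{X}_s}$, so that $g\in\Gmot(\mathcal{H}^*_{\mathfrak{X}_s})$ and hence $g\in P(\mathfrak{X}_s)$.

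I expect this final step to be the main difficulty. The fibre $\mathcal{H}^*_{\mathfrak{X}_s}$ at a special point $s\in S$ can carry motivated cycles that do not spread out over all of $S$, making $\Gmot(\mathcal{H}^*_{\mathfrak{X}_s})$ strictly smaller than $\Gmot(\mathcal{H}^*_{\mathfrak{X}/S})_s$; one has to check that these extra cycles are nevertheless invariant under $P(\mathfrak{X}/S)_s$, which is precisely where the interplay of the deformation principle, Theorem~\ref{thm:LLV} and the decomposition of (b) is needed. By contrast, part (a) and the formal half of (b) should be comparatively soft.
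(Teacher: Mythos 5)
First, note that the paper does not actually prove this theorem: it is quoted verbatim from \cite[Lemma 6.8, Theorems 6.9 and 6.12]{FFZ}, so you are in effect reproving results of that reference. Your argument for (a) is correct and is the standard one (graded algebra automorphism, trivial action on $H^2_X$, uniqueness of $\Lambda_x$); it parallels Lemma \ref{lem:functoriality}. The skeleton of (b) — commutation via (a), then trivial intersection and generation via the bijectivity of $\MT(H^*_X)\to\Gmot(\mathcal{H}^2_X)$, resp.\ $\Gmot(\mathcal{H}^1_A)$ — is also right, but the injectivity step in the odd case is under-justified: Remarks \ref{rem:minusone} and \ref{rem:isogenies} only describe the kernel of $R\bigl(\mathrm{G}_0(X)\bigr)\to\GL(H^2_X)$ and say nothing about the action on $H^1_A$. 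To see that $-1\in\CSpin(H^2_X)$ cannot die in $\Gmot(\mathcal{H}^1_A)$ you must identify the action of $\MT(H^*_X)\subset\CSpin(H^2_X)$ on $H^1_A$ with the restriction of the Clifford representation, i.e.\ you need the compatibility of the LLV $\CSpin$-structure on $H^*_X$ with the Kuga--Satake $\CSpin$-structure on $H^1_A$. That compatibility is a genuine theorem (the content of \cite[\S 6]{FFZ}), not a formal consequence of Assumption \ref{ass:ODD}.

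The real gap is in (c), and you have located it yourself without closing it. Defining $P(\mathfrak{X}/S)$ as the kernel of $\Gmot(\mathcal{H}^*_{\mathfrak{X}/S})\to\Gmot(\mathcal{H}^2_{\mathfrak{X}/S})$ does give a local system of groups containing $P(\mathfrak{X}_s)$ in each fibre; the substance of (c) is the reverse inclusion $P(\mathfrak{X}/S)_s\subseteq\Gmot(\mathcal{H}^*_{\mathfrak{X}_s})$ at special points $s$, and "the interplay of the deformation principle, Theorem \ref{thm:LLV} and (b)" is not an argument for it. Theorem \ref{thm:defPrin} propagates motivatedness along \emph{global flat sections}, whereas the extra motivated cycles at a special point are precisely those that are not flat over $S$, so the deformation principle cannot be applied to them one at a time; and the formal containment $P(\mathfrak{X}_s)\times\MT(H^*_{\mathfrak{X}_s})\subseteq P(\mathfrak{X}/S)_s\times\MT(H^*_{\mathfrak{X}/S})_s$ does not by itself force equality of the first factors. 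The argument that works (see \cite[Theorem 6.12]{FFZ}, and compare the proof of Proposition \ref{prop:constantdefect} above) has a different shape: inside a suitable tensor construction $\mathcal{T}/S$ one takes the submotive $\mathcal{W}/S$ of invariants of the \emph{generic} Mumford--Tate group, applies Theorem \ref{thm:defPrin} to the flat section given by parallel transport of the identity to show that $\mathcal{W}/S$ is a constant motive, deduces that $\Gmot(\mathcal{W}_s)$ is independent of $s$, identifies $\Gmot(\mathcal{W}_s)$ with the image of $P(\mathfrak{X}_s)$ using (b), and concludes by choosing $\mathcal{T}$ so that $P(\mathfrak{X}/S)_s$ acts faithfully on $W_s$. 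You should either supply this argument or cite \cite{FFZ} for part (c).
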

If $\mathfrak{X}\to S$ is a family as in (c), the decomposition in (b) spreads over $S$ and gives a decomposition at the level of local systems of algebraic groups 
\[
\Gmot(\mathcal{H}^*_{\mathfrak{X}/S})=P(\mathfrak{X}/S) \times \MT(H^*_{\mathfrak{X}/S}).
\]
For all $s\in S$, the inclusion of $\Gmot(\mathcal{H}^*_{\mathfrak{X}_s})$ into $\Gmot(\mathfrak{X}/S)_s$ is the direct product of $\MT(H^*_{\mathfrak{X}_s})\subset \MT(H^*_{\mathfrak{X}/S})_s$ and $P(\mathfrak{X}_s)=P(\mathfrak{X}/S)_s$. Since the connected component of the identity of the monodromy group is a subgroup of the generic Mumford--Tate group of the family and the latter commutes with the defect group, the local system $P(\mathfrak{X}/S)$ becomes constant after a finite base change $S'\to S$.
\begin{remark}
	Conjecturally, the group $P(X)$ is trivial for any hyper-K\"{a}hler variety $X$ with $b_2>3$. In fact, the triviality of the defect group is equivalent to the conjecture which says that $\MT(H^*_X)=\Gmot(\mathcal{H}^*_X)$ (i.e. \textit{Hodge classes are motivated}), which would be a consequence of the Hodge conjecture.
\end{remark}

\subsection{} 
With these preliminaries behind us we can proceed towards the proof of Theorem \ref{thm:mot}. To start with, we will need a stronger version of the deformation invariance of defect groups from Theorem \ref{thm:propertiesdefectgroup}.(c). 

With $X$ as in Theorem \ref{thm:propertiesdefectgroup}, we have a canonical surjective homomorphism $\pi_X\colon \mathcal{G}_{\mathrm{mot}}(\mathrm{AM}_{\CC})\to \Gmot(\mathcal{H}^*_X)$.
We also let 
\[
\pr_X\colon \mathcal{G}_{\mathrm{mot}}(\mathrm{AM}_{\CC})\to P(X)
\]
be the composition of $\pi_X$ with the projection coming from the isomorphism $\Gmot(\mathcal{H}^*_X)=P(X)\times \MT(H^*_X)$. Via Tannaka duality, the projection $\pr_X$ corresponds to the subcategory of $\langle \mathcal{H}^*_{X}\rangle$ of motives on which $\MT(H^*_X)$ acts trivially, i.e.\ the motives in $\langle\mathcal{H}^*_{X}\rangle$ with realization a direct sum of trivial Hodge structures $\QQ(0)$.

\begin{remark}\label{rem:abelianpart}
	Composing $\pi_X$ with the other projection we obtain
	\[
	p_X\colon \mathcal{G}_{\mathrm{mot}}(\mathrm{AM}_{\CC})\to \MT(H^*_X).
	\]
	By the definition of the defect group this homomorphism corresponds via Tannaka duality to the subcategory $\langle \mathcal{H}^2_X \rangle$ of $\mathrm{AM}_{\CC}$ 
	if the odd cohomology of $X$ is trivial, and to the subcategory $\langle \mathcal{H}^1_A \rangle$ 
	otherwise, where $A$ is the Kuga-Satake abelian variety associated to~$H^2_X$. 
    Since $b_2(X)>3$, the work of Andr\'{e} (\cite{Andre1996}) ensures that the motive $\mathcal{H}^2_X$ is abelian, i.e.\ it belongs to the tannakian subcategory $\mathrm{AM}_{\CC}^{\mathrm{ab}}$ of $\mathrm{AM}_{\CC}$ generated by the motives of abelian varieties. 
	Hence, in any case the homomorphism $p_X$ factors through the quotient homomorphism  $\mathcal{G}_{\mathrm{mot}}(\mathrm{AM}_{\CC}) \to \mathcal{G}_{\mathrm{mot}}(\mathrm{AM}_{\CC}^{\mathrm{ab}})$, where $\mathcal{G}_{\mathrm{mot}}(\mathrm{AM}_{\CC}^{\mathrm{ab}})$ is the motivic Galois group of the tannakian category $\mathrm{AM}^{\mathrm{ab}}_{\CC}$ of abelian motives.
\end{remark}

\begin{proposition}\label{prop:constantdefect}
	Let $\mathfrak{X}\to S$ be a smooth projective family of hyper-K\"{a}hler varieties with $b_2>3$; if the odd cohomology of the fibres is not trivial, assume that they satisfy Assumption \ref{ass:ODD}.
	Assume that the monodromy group $G_{\mathrm{mono}}(H^*_{\mathfrak{X}/S})$ is connected. Let $a, b$ be points in~$S$. Choose a continous path $\gamma$ from $a$ to $b$ and let $\Xi\colon P(\mathfrak{X}_{a})\to P(\mathfrak{X}_{b})$ be the isomorphism obtained via parallel transport along $\gamma$ in the local system $P(\mathfrak{X}/S)$. Then $\Xi$ does not depend on the choice of $\gamma$ and 
	the diagram
	\[
	\begin{tikzcd} 
	& P(\mathfrak{X}_{a}) \arrow{dd}{\Xi}\\
	\mathcal{G}_{\mathrm{mot}}(\mathrm{AM}_{K})  \arrow{ur}{{\pr_{\mathfrak{X}_{a}}}} \arrow[swap]{dr}{{\pr_{\mathfrak{X}_{b}}}} \\
	& P(\mathfrak{X}_{b})
	\end{tikzcd} 
	\]
	is commutative. 
\end{proposition}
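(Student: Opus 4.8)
The plan is to realize $\pr_{\mathfrak{X}_a}$ and $\pr_{\mathfrak{X}_b}$ as the homomorphisms induced on tannakian fundamental groups by the two functors of passing to the fibre at $a$, resp.\ at $b$, out of one fixed tannakian subcategory $\mathcal{P}/S$ of motives over $S$, and then to compare the resulting fibre functors by means of the deformation principle. The independence of $\Xi$ from $\gamma$ is the easy half: since $G_{\mathrm{mono}}(H^*_{\mathfrak{X}/S})$ is connected it is contained in $\MT(H^*_{\mathfrak{X}/S})$, and the latter centralizes $P(\mathfrak{X}/S)$ because of the direct product decomposition $\Gmot(\mathcal{H}^*_{\mathfrak{X}/S})=P(\mathfrak{X}/S)\times\MT(H^*_{\mathfrak{X}/S})$; hence, for $p\in P(\mathfrak{X}_a)$ and a loop $\delta$ at $a$, parallel transport of $p$ around $\delta$ equals $\rho(\delta)\,p\,\rho(\delta)^{-1}=p$, where $\rho$ is the monodromy on $H^*_{\mathfrak{X}_a}$, so the local system of groups $P(\mathfrak{X}/S)$ has trivial monodromy and $\Xi$ is path-independent.

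For the commutativity I would let $\mathcal{P}/S\subset\langle\mathcal{H}^*_{\mathfrak{X}/S}\rangle\subset\mathrm{AM}/S$ be the full tannakian subcategory of motives whose realization is a constant local system underlying a trivial variation $\QQ(0)^{\oplus r}$. Using once more that $G_{\mathrm{mono}}(H^*_{\mathfrak{X}/S})\subset\MT(H^*_{\mathfrak{X}/S})$, this $\mathcal{P}/S$ is exactly the subcategory of objects on which the $\MT$-factor of $\Gmot(\mathcal{H}^*_{\mathfrak{X}/S})$ acts trivially, so for each $s\in S$ the fibre functor $\omega_s\colon\mathcal{M}/S\mapsto r(\mathcal{M}/S)_s$ satisfies $\Aut^{\otimes}(\omega_s|_{\mathcal{P}/S})=P(\mathfrak{X}/S)_s=P(\mathfrak{X}_s)$. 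Unwinding the definition of $\pr_{\mathfrak{X}_s}$ as $\pi_{\mathfrak{X}_s}$ followed by the projection $\Gmot(\mathcal{H}^*_{\mathfrak{X}_s})\to P(\mathfrak{X}_s)$, and using the stated compatibility of the inclusion $\Gmot(\mathcal{H}^*_{\mathfrak{X}_s})\hookrightarrow\Gmot(\mathcal{H}^*_{\mathfrak{X}/S})_s$ with the two product decompositions, one sees that $\pr_{\mathfrak{X}_s}\colon\mathcal{G}_{\mathrm{mot}}(\mathrm{AM}_{\CC})\to P(\mathfrak{X}_s)$ is precisely the homomorphism induced by the functor $\mathcal{P}/S\to\mathrm{AM}_{\CC}$ of passing to the fibre at $s$, relative to the identity $\omega^{\mathrm{pt}}\circ(\text{fibre at }s)=\omega_s|_{\mathcal{P}/S}$, where $\omega^{\mathrm{pt}}$ is the standard fibre functor on $\mathrm{AM}_{\CC}$. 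Moreover, since objects of $\mathcal{P}/S$ have constant realization, parallel transport defines a single (path-independent) tensor isomorphism of fibre functors $\tau\colon\omega_a|_{\mathcal{P}/S}\xrightarrow{\sim}\omega_b|_{\mathcal{P}/S}$, and tracing through the construction of $P(\mathfrak{X}/S)$ inside $\GL(H^*_{\mathfrak{X}/S})$ shows that $\Xi$ is conjugation by $\tau$.

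With these identifications the commutativity of the triangle reduces to the assertion that, for every $\mathcal{N}/S\in\mathcal{P}/S$, the transport isomorphism $\tau_{\mathcal{N}}\colon r(\mathcal{N})_a\xrightarrow{\sim}r(\mathcal{N})_b$ is $\mathcal{G}_{\mathrm{mot}}(\mathrm{AM}_{\CC})$-equivariant, equivalently that it is an isomorphism of motives $\mathcal{N}_a\to\mathcal{N}_b$ in $\mathrm{AM}_{\CC}$; granting this, evaluating $\Xi\bigl(\pr_{\mathfrak{X}_a}(g)\bigr)$ and $\pr_{\mathfrak{X}_b}(g)$ at $\mathcal{N}/S$ gives $\tau_{\mathcal{N}}\,g_{r(\mathcal{N}_a)}\,\tau_{\mathcal{N}}^{-1}$ and $g_{r(\mathcal{N}_b)}$, which coincide. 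To prove that $\tau_{\mathcal{N}}$ is motivic I would apply Theorem \ref{thm:defPrin} to the motive $\mathcal{Q}/S\in\mathrm{AM}/S$ obtained by tensoring $\mathcal{N}/S$ with the dual of the constant family on $\mathcal{N}_a$, whose realization is the constant local system $\underline{\Hom}\bigl(r(\mathcal{N}_a),\,r(\mathcal{N}/S)_{\bullet}\bigr)$: the global section given by parallel transport from $a$ takes the value $\mathrm{id}_{r(\mathcal{N}_a)}$ at $a$, which is the identity morphism of $\mathcal{N}_a$ and hence motivated, so by Theorem \ref{thm:defPrin} its value $\tau_{\mathcal{N}}$ at $b$ is motivated as well, i.e.\ lies in $\Hom_{\mathrm{AM}_{\CC}}(\mathcal{N}_a,\mathcal{N}_b)$; it is an isomorphism since the same argument applies to its inverse. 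The step I expect to require the most care is this second one of bookkeeping — identifying $\pr_{\mathfrak{X}_s}$ with the tannakian map attached to $\mathcal{P}/S$ and $\Xi$ with conjugation by $\tau$ — the connectedness hypothesis on $G_{\mathrm{mono}}$ entering precisely to guarantee both that $\mathcal{P}/S$ computes $P(\mathfrak{X}/S)$ and that $\tau$ is path-independent.
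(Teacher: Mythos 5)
Your argument is correct and is essentially the paper's own proof: both rest on (i) the connected monodromy group lying inside the generic Mumford--Tate group, which centralizes the defect group and makes the relevant local systems constant, (ii) applying the deformation principle (Theorem \ref{thm:defPrin}) to the parallel transport of the identity on the Mumford--Tate-invariant part to see that it is motivated, and (iii) identifying $\pr_{\mathfrak{X}_s}$ with the tannakian group homomorphism attached to the motives with trivial realization and $\Xi$ with conjugation by the transport. The only (cosmetic) difference is that you work with the full subcategory $\mathcal{P}/S$ of such motives, whereas the paper picks a single tensor construction $\mathcal{T}/S$ whose Mumford--Tate-invariant submotive $\mathcal{W}/S$ carries a faithful action of $P(\mathfrak{X}_s)$.
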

\begin{proof}
	Since the monodromy group is connected by assumption, the local system $P(\mathfrak{X}/S)$ is constant and $\Xi$ does not depend on the choice of the path~$\gamma$.
	
	Consider any motive over $S$ of the form 
	\[
	\mathcal{T}/S\coloneqq (\mathcal{H}^*_{\mathfrak{X}/S})^{\otimes t_1 } \otimes (\mathcal{H}^*_{\mathfrak{X}/S})^{\vee, \otimes t_2} \otimes \QQ_{S}(t_3) \in \mathrm{AM}/S,
	\]
	for integers $t_1,t_2,t_3$. Let $T/S$ be its realization. For any $s\in S$ we let $W_{s} \subset T_{s}$ be the subspace of invariants for the generic Mumford--Tate group $\MT(T/S)_s$; this yields a sub-variation of Hodge structures $W/S\subset T/S$. Moreover, as $\MT(T/S)_s$ is normal in $\Gmot(\mathcal{T}/S)_s$, the variation $W/S$ is the realization of a submotive $\mathcal{W}/S\subset \mathcal{T}/S$ over~$S$.
	
	The motive $\mathcal{W}/S$ is a constant motive over $S$. Indeed, let us denote by~$\mathcal{D}$ the motive $\mathcal{W}_{b}$, and let ${\mathcal{D}}/S$ be the constant motive over $S$ with fibre $\mathcal{D}$; let $D/S$ be the realization of $\mathcal{D}/S$.
	Then $\mathrm{id}_{b}\colon {W}_{b}\to {D}_{b}$ is monodromy invariant and obviously an isomorphism of motives; by \cite[Th\'eor\`eme 0.5]{andre1996Motives} it extends to an isomorphism of families of motives $\mathcal{W}/S \xrightarrow{\ \sim \ } \mathcal{D}/S$.
	
	It follows that parallel transport along $\gamma$ in the local system $W/S$ gives a linear map $\Psi\colon W_a\to W_b$ which is 
	the realization of an isomorphism of motives $\mathcal{W}_{a}\cong \mathcal{W}_{b}$. 
	Hence the induced isomorphism $\Psi_*\colon \GL(W_{a})\to \GL(W_{b})$ fits into a commutative diagram
	\[
	\begin{tikzcd} 
	& \Gmot(\mathcal{W}_{a}) \arrow{dd}{\Psi_*} \\
	\mathcal{G}_{\mathrm{mot}}(\mathrm{AM}_{\CC})  \arrow{ur} \arrow[swap]{dr} \\
	& \Gmot(\mathcal{W}_{b})
	\end{tikzcd} 
	\]	 
	Note that since the generic Mumford--Tate group acts trivially on $W_s$ by construction, the group $\Gmot(\mathcal{W}_s)$ is a quotient of the defect group $P(\mathfrak{X}_s)$.
	
	We now choose the tensor construction $T/S$ in such a way that the action of $P(\mathfrak{X}_s)$ on the subspace $W_s$ is faithful; in this case we have $\Gmot(\mathcal{W}_s)=P(\mathfrak{X}_s)$ for all points $s\in S$, and the homomorphism $\mathcal{G}_{\mathrm{mot}}(\mathrm{AM}_{\CC})\to \Gmot(\mathcal{W}_{s})$ is identified with $\pr_{\mathfrak{X}_s}\colon \mathcal{G}_{\mathrm{mot}}(\mathrm{AM}_{\CC})\to P(\mathfrak{X}_s)$.
	Moreover, $P(\mathfrak{X}/S)\subset \GL(W/S)$ is a sub-local system of algebraic groups, and therefore the isomorphism $\Xi$ obtained via parallel transport along $\gamma$ in the local system $P(\mathfrak{X}/S)$ is the restriction of the isomorphism $\Psi_*\colon \GL(\mathcal{W}_{a})\to \GL(\mathcal{W}_{b})$ to $P(\mathfrak{X}_{a})$. This concludes the proof.
\end{proof}

By Remark \ref{rem:gmotFields}, Theorem \ref{thm:mot} is equivalent to the following statement.
\begin{theorem}\label{thm:mot1}
	Let $X_1$ and $X_2$ be deformation equivalent complex hyper-K\"{a}hler varieties with $b_2>6$; if they have non-trivial odd cohomology, assume that they satisfy Assumption \ref{ass:ODD}.
	Assume that $f\colon H^2_{X_1}\to H^2_{X_2}$ is a Hodge isometry. Then there exists an isomorphism of graded algebras $F\colon H^*_{X_1}\to H^*_{X_2}$ which is the realization of an isomorphism of motives $\mathcal{H}^*_{X_1}\to \mathcal{H}^*_{X_2}$. 
\end{theorem}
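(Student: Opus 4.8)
The plan is to exploit the product decomposition $\Gmot(\mathcal{H}^*_{X_i})=P(X_i)\times \MT(H^*_{X_i})$ of Theorem \ref{thm:propertiesdefectgroup}(b): one controls the defect factor through the deformation equivalence by means of Proposition \ref{prop:constantdefect}, and the Mumford--Tate factor through the Hodge isometry $f$ together with André's theorem in degree $2$. The point that makes the two halves interact well is that the LLV-correction used to adjust the degree $2$ behaviour commutes with the defect group by Theorem \ref{thm:propertiesdefectgroup}(a). \emph{Step 1: a flat graded algebra isomorphism.} Since $X_1$ and $X_2$ are deformation equivalent they are linked by a chain of smooth projective families over smooth connected complex bases; running the argument below link by link and composing the resulting isomorphisms, we may pretend there is a single such family $\mathfrak{X}\to S$ with $X_1=\mathfrak{X}_a$, $X_2=\mathfrak{X}_b$. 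After a finite étale base change we may assume that the monodromy group $G_{\mathrm{mono}}(H^*_{\mathfrak{X}/S})$ is connected, so that the local system $P(\mathfrak{X}/S)$ is constant; in the odd case Assumption \ref{ass:ODD} guarantees that all this machinery applies. Choosing a path $\gamma$ from $a$ to $b$, parallel transport in the local system $H^*_{\mathfrak{X}/S}$ produces a graded algebra isomorphism $\tau\colon H^*_{X_1}\to H^*_{X_2}$ whose degree $2$ component $\tau^{(2)}$ is an isometry of Beauville--Bogomolov forms (cup product and the Beauville--Bogomolov form being flat sections of the relevant local systems), but which is not an isomorphism of Hodge structures in general.

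\emph{Step 2: correcting the degree $2$ part.} The isometry $\phi\coloneqq (\tau^{(2)})^{-1}\circ f\in\Oo(H^2_{X_1})(\QQ)$ may be assumed to lie in $\SO(H^2_{X_1})(\QQ)$, after composing $f$ with the reflection $s_e$ in the class of an ample divisor $e$ on $X_1$ if necessary; note that $s_e$ is a Hodge isometry which, $e$ being algebraic, realizes an automorphism of the motive $\mathcal{H}^2_{X_1}$. Exactly as in the proof of Proposition \ref{prop:extendisometry}, lift $\phi$ to $g\in\CSpin(H^2_{X_1})(\QQ)$ via Hilbert's Theorem $90$; by Lemma \ref{lem:byauto} and Remark \ref{rem:isogenies}, $R(g)$ is then a graded algebra automorphism of $H^*_{X_1}$ with $R(g)^{(2)}=\mathrm{Nm}(g)\cdot\phi$. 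Set $F\coloneqq \tau\circ R(g)\colon H^*_{X_1}\to H^*_{X_2}$; this is a graded algebra isomorphism whose degree $2$ component $F^{(2)}$ is a nonzero scalar times $f$ (up to the harmless factor $s_e$). Since $b_2(X_i)>3$, André's work \cite{Andre1996} shows that $\mathcal{H}^2_{X_i}$ is an abelian motive with $\Gmot(\mathcal{H}^2_{X_i})=\MT(H^2_{X_i})$, so any Hodge isometry $H^2_{X_1}\to H^2_{X_2}$ is the realization of an isomorphism of motives; hence $F^{(2)}$ is the realization of an isomorphism $\mathcal{H}^2_{X_1}\to\mathcal{H}^2_{X_2}$.

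\emph{Step 3: $F$ is motivic.} It remains to check that the isomorphism $F_*\colon \GL(H^*_{X_1})\to\GL(H^*_{X_2})$, $A\mapsto FAF^{-1}$, intertwines the homomorphisms $\pi_{X_i}\colon\mathcal{G}_{\mathrm{mot}}(\mathrm{AM}_{\CC})\to\Gmot(\mathcal{H}^*_{X_i})$; writing $\pi_{X_i}=\pr_{X_i}\,p_{X_i}$ via Theorem \ref{thm:propertiesdefectgroup}(b), it suffices to treat the two factors separately. For the defect factor: $R(g)$ lies in $R(\mathrm{G}_0(X_1))$ and hence commutes with $P(X_1)$ by Theorem \ref{thm:propertiesdefectgroup}(a), so $F_*$ and the parallel transport $\tau_*$ agree on $P(X_1)$; by Proposition \ref{prop:constantdefect} this parallel transport is the path-independent isomorphism $\Xi\colon P(X_1)\to P(X_2)$ satisfying $\Xi\circ\pr_{X_1}=\pr_{X_2}$, so $F_*\circ\pr_{X_1}=\pr_{X_2}$. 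For the Mumford--Tate factor: by Lemma \ref{lem:functoriality}, $F_*$ restricts to an isomorphism $R(\mathrm{G}_0(X_1))\to R(\mathrm{G}_0(X_2))$; since $F$ is graded and $F^{(2)}$ is motivic, applying the degree $2$ projection to $\xi(h)\coloneqq F_*\bigl(p_{X_1}(h)\bigr)\cdot p_{X_2}(h)^{-1}$ shows $\xi(h)$ lies in the finite central kernel $C_2=\ker\bigl(R(\mathrm{G}_0(X_2))\to R^{(2)}(\mathrm{G}_0(X_2))\bigr)$, and $\xi$ is a homomorphism $\mathcal{G}_{\mathrm{mot}}(\mathrm{AM}_{\CC})\to C_2$ into a finite group. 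By Remark \ref{rem:abelianpart} the maps $p_{X_1}$ and $p_{X_2}$, hence $\xi$ as well, factor through $\mathcal{G}_{\mathrm{mot}}(\mathrm{AM}^{\mathrm{ab}}_{\CC})$, which is connected (being, by André, an inverse limit of Mumford--Tate groups); therefore $\xi$ is trivial and $F_*\circ p_{X_1}=p_{X_2}$. Combining the two parts, $F_*\circ\pi_{X_1}=\pi_{X_2}$, so $F$ is the realization of an isomorphism $\mathcal{H}^*_{X_1}\to\mathcal{H}^*_{X_2}$ of motives, and it is a graded algebra isomorphism by construction (and automatically an isomorphism of Hodge structures).

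The delicate points I expect are: first, producing a version of the deformation invariance of defect groups that is compatible with the motivic Galois action — this is precisely Proposition \ref{prop:constantdefect}, which rests on the deformation principle Theorem \ref{thm:defPrin}; and second, in the presence of odd cohomology, eliminating the residual sign in $C_2=\{\pm1\}$, for which one reduces to abelian motives over $\CC$ using Remark \ref{rem:abelianpart}. The whole argument works only because the LLV-correction $R(g)$ commutes with $P(X_1)$, so that fixing the degree $2$ part does not disturb the defect-group bookkeeping; this decoupling of the two factors is the main idea.
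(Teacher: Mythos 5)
Your proposal is correct and follows essentially the same route as the paper: the same chain-of-families parallel transport $\Psi$, the same $\CSpin$-correction $R(g)$ via Hilbert 90, and the same splitting of the verification into the defect factor (handled by Theorem \ref{thm:propertiesdefectgroup}(a) and Proposition \ref{prop:constantdefect}) and the Mumford--Tate factor (handled via Remark \ref{rem:abelianpart} and Andr\'e's theorem on abelian motives). The only cosmetic difference is in the Mumford--Tate step: the paper first upgrades $F$ to a Hodge isomorphism via Proposition \ref{prop:hodge} and then uses $\mathcal{G}_{\mathrm{mot}}(\mathrm{AM}^{\mathrm{ab}}_{\CC})=\mathcal{MT}(\mathrm{HS}^{\mathrm{ab}}_{\QQ})$ directly, whereas you rerun the finite-central-kernel/connectedness argument of Proposition \ref{prop:hodge} at the level of $\mathcal{G}_{\mathrm{mot}}(\mathrm{AM}^{\mathrm{ab}}_{\CC})$ starting from the motivated-ness of $F^{(2)}$ --- both are valid and rest on the same inputs.
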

\begin{proof} 
	By Theorem \ref{thm:connectingHK} there exist smooth and projective families of hyper-K\"{a}hler varieties $\mathfrak{X}^i\to S_i$ over non-singular connected quasi-projective varieties $S_i$, for $i=1,\dots , N$, and points $a_i, b_i \in  S_i$ with isomorphisms 
	\[
	X_1 \xrightarrow{\ \sim \ }\mathfrak{X}^1_{a_1}, 
	\ \ \ \ \
	\mathfrak{X}^i_{b_i} \xrightarrow{\ \sim \ } \mathfrak{X}^{i+1}_{a_{i+1}}, \ \text{for} \ i=1,\dots ,N-1,
	\ \ \ \ \
	\mathfrak{X}^N_{b_N} \xrightarrow{\ \sim \ } X_2.
	\]
	We may and will assume that the monodromy groups $G_{\mathrm{mono}}(H^*_{\mathfrak{X}^{i}/{S_i}})$ are connected. 
	For each $i$, let $\gamma_i $ be a path in $S_i$ from $a_i $ to $b_i$; let $\Psi \colon H^*_{X_1} \to H^*_{X_2}$ be the isomorphism obtained as composition of the isomorphisms $\Psi_i$ given by parallel transport along $\gamma_i$. We denote by $\psi\coloneqq \Psi^{(2)}\colon H^2_{X_1}\to H^2_{X_2}$ the isometry induced by~$\Psi$.
	
	Let $f\colon H^2_{X_1}\to H^2_{X_2}$ be a Hodge isometry. We construct the isomorphism of graded algebras $F\colon H^*_{X_1}\to H^*_{X_2}$ as in the proof of Proposition \ref{prop:extendisometry}: we may assume that the isometry $\psi^{-1}\circ f\colon H^2_{X_1}\to H^2_{X_1}$ has determinant $1$; thanks to Hilbert's Theorem 90 and the short exact sequence
	\[
	1\to \mathbb{G}_m \to \CSpin(H^2_{X_1})\xrightarrow{\ \pi  \ } \SO(H^2_{X_1}) \to  1,
	\] 
	the morphism $\pi\colon \CSpin(H^2_{X_1})\to \SO(H^2_{X_1})$ is surjective on $\QQ$-points and hence there exists $g\in \CSpin(H^2_{X_1})(\QQ)$ such that $\pi(g)=\psi^{-1}\circ f$. By Lemma \ref{lem:byauto}, $R(g)$ is a graded algebra automorphism of $H^*_{X_1}$, and we define 
	\begin{equation*}
	F\coloneqq \Psi\circ R(g)\colon H^*_{X_1}\to H^*_{X_2}.
	\end{equation*}
	
	We claim that $F$ is the realization of an isomorphism of motives $\mathcal{H}^*_{X_1}\to \mathcal{H}^*_{X_2}$.
	If $F_*\colon \GL(H^*_{X_1})\to \GL(H^*_{X_2})$ denotes the induced isomorphism, we have to prove that $F_*$ restricts to an isomorphism $\Gmot(\mathcal{H}^*_{X_1})\xrightarrow{\ \sim \ }  \Gmot(\mathcal{H}^*_{X_2})$ such that the diagram
	\[
	\begin{tikzcd}
	& \Gmot(\mathcal{H}^*_{X_1}) \arrow{dd}{F_*} \\
	\mathcal{G}_{\mathrm{mot}}(\mathrm{AM}_{\CC})\arrow{ur}{\pi_{1}} \arrow[swap]{dr}{\pi_{2}} \\
	& \Gmot(\mathcal{H}^*_{X_2})
	\end{tikzcd} 
	\]
	is commutative.

	By Theorem \ref{thm:propertiesdefectgroup}.(a), the automorphism $R(g)$ of $H^*_{X_1}$ commutes with $P(X_1)$. Since~$\Psi$ is the composition of parallel transport operators and the defect group is constant in families, $F_*$ identifies $P(X_1)$ with $P(X_2)$, and the induced isomorphism $F_*\colon P(X_1)\to P(X_2)$ is the restriction to $P(X_1)$ of~$\Psi_*\colon \GL(H^*_{X_1})\to \GL(H^*_{X_2})$. 
	
	
	
	Since the degree~$2$ component of $F$ is a multiple of the Hodge isometry $f$, by Proposition \ref{prop:hodge}, $F$ is an isomorphism of Hodge structures, and hence $F_*$ identifies $\MT(H^*_{X_1})$ with $\MT(H^*_{X_2})$.
	By Theorem \ref{thm:propertiesdefectgroup}.(b), $\Gmot(\mathcal{H}^*_{X_i})=P(X_i)\times \MT(H^*_{X_i})$,  and
 it suffices to show the commutativity of the two diagrams
	\[
	\begin{tikzcd} 
	& P(X_1) \arrow{dd}{ \Psi_* }  & & & \MT({H}^*_{X_1}) \arrow{dd}{ F_* } \\
	\mathcal{G}_{\mathrm{mot}}(\mathrm{AM}_{\CC})  \arrow{ur}{\pr_1} \arrow[swap]{dr}{\pr_2} & & & \mathcal{G}_{\mathrm{mot}}(\mathrm{AM}_{\CC})  \arrow{ur}{ p_1} \arrow[swap]{dr}{p_2} \\
	& P(X_2)  & & & \MT({H}^*_{X_2}) 
	\end{tikzcd} 
	\]
	
	The left triangle is commutative by repeated application of Proposition \ref{prop:constantdefect}, since the restriction of $\Psi_*$ to $P(X_1)$ is the composition of the isomorphisms $\Xi_i$ obtained via parallel transport along $\gamma_i$ in the local system $P(\mathfrak{X}^i/S_i)$.
	
	For the right one, we proceed as follows. By Remark \ref{rem:abelianpart}, $p_i$ corresponds to the inclusion of a subcategory of abelian motives; equivalently, for $i=1,2$, the homomorphism $p_i$ factors through $\mathcal{G}_{\mathrm{mot}}(\mathrm{AM}_{\CC})\to \mathcal{G}_{\mathrm{mot}}(\mathrm{AM}_{\CC}^{\mathrm{ab}})$. If we let $\mathrm{HS}_{\QQ}^{\mathrm{ab}}\subset \mathrm{HS}_{\QQ}^{\mathrm{pol}}$ be the tannakian subcategory generated by the Hodge structures of abelian varieties, we have $\mathcal{MT}(\mathrm{HS}_{\QQ}^{\mathrm{ab}})=\mathcal{G}_{\mathrm{mot}}(\mathrm{AM}_{\CC}^{\mathrm{ab}})$, by \cite[Th\'eor\`eme 0.6.4]{andre1996Motives}. 
	But now the diagram
	\[
	\begin{tikzcd}
	&  \MT(H^*_{X_1}) \arrow{dd}{F_*} \\
	\mathcal{G}_{\mathrm{mot}}(\mathrm{AM}_{\CC}^{\mathrm{ab}}) = \mathcal{MT}(\mathrm{HS}_{\QQ}^{\mathrm{ab}}) \arrow{ur}{p'_1} \arrow[swap]{dr}{p'_2}\\
	& \MT(H^*_{X_2}) 
	\end{tikzcd}
	\]
	is commutative, since $F$ is an isomorphism of Hodge structures. 
\end{proof}

Finally, we prove Corollary \ref{cor:unconditionalGalRep} from the introduction.

\begin{corollary}\label{cor:unconditionalGalRep1}
	Let $K\subset \CC$ be a subfield which is finitely generated over $\QQ$, and let $X_1$, $X_2$ be deformation equivalent hyper-K\"{a}hler varieties over $K$ with $b_2(X_i)>6$. If their odd cohomology is not trivial, assume that Assumption \ref{ass:ODD} holds for $X_1$ and $X_2$. Assume that $f\colon H^2_{X_1, \ell}\to H^2_{X_2,\ell}$ is a $\Gal(\bar{K}/K)$-equivariant isometry. Then, there exist a finite field extension $K'/K $ and an isomorphism $F\colon H^*_{X_1,\ell}\to H^*_{X_2,\ell}$ of graded algebras which is $\Gal(\bar{K}/K')$-equivariant.
\end{corollary}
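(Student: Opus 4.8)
The plan is to run the proof of Theorem~\ref{thm:mot1} $\ell$-adically: I build the graded-algebra isomorphism $F$ from the given $\ell$-adic isometry $f$ in exactly the same way, and I control it using the Mumford--Tate conjecture in degree~$2$ --- which holds by \cite{Andre1996} because $b_2(X_i)>3$ --- in place of the Hodge-theoretic input of that theorem. First I would enlarge $K$ by a finite extension so that, for $i=1,2$, the group $G(H^*_{X_i,\ell})$ is connected and the image of $\sigma_{X_i}$ is contained in $\Gmot(\mathcal{H}^*_{X_{i,\CC}})(\QQ_\ell)$: the latter holds because $\Gmot(\mathcal{H}^*_{X_{i,\CC}})$ is cut out inside $\GL(H^*_{X_i})$ by finitely many motivated cycles, which become defined over $K$ after a finite extension and are then fixed by $\sigma_{X_i}$ (compare Remark~\ref{rem:gmotFields}). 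By Theorem~\ref{thm:propertiesdefectgroup}.(b) this image lies in $P(X_i)(\QQ_\ell)\times\MT(H^*_{X_i})(\QQ_\ell)$, and after a further finite extension I may assume that the pair $(\sigma_{X_1},\sigma_{X_2})$ factors through the $\QQ_\ell$-points of the motivic Galois group of $\langle\mathcal{H}^*_{X_{1,\CC}}\oplus\mathcal{H}^*_{X_{2,\CC}}\rangle$.

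Next, as in the proof of Theorem~\ref{thm:mot1}, I pick a chain of smooth proper families over non-singular complex curves, with connected monodromy, linking $X_{1,\CC}$ to $X_{2,\CC}$, and let $\Psi\colon H^*_{X_1}\to H^*_{X_2}$ be the resulting composition of parallel-transport isomorphisms, with $\psi=\Psi^{(2)}\colon H^2_{X_1}\to H^2_{X_2}$ the induced isometry. After modifying $f$ by an ample class as in Proposition~\ref{prop:extendisometry}, so that $\psi^{-1}\circ f$ has determinant~$1$, Hilbert's Theorem~$90$ yields $g\in\CSpin(H^2_{X_1})(\QQ_\ell)$ with $\pi(g)=\psi^{-1}\circ f$, and I set $F\coloneqq\Psi\circ R(g)$. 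By Lemma~\ref{lem:byauto} this is an isomorphism of graded algebras, and by Remark~\ref{rem:isogenies} its degree~$2$ component is $\mathrm{Nm}(g)\cdot f$, hence $\Gal(\bar K/K)$-equivariant.

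It remains to produce a finite-index subgroup of $\Gal(\bar K/K)$ on which $F_*$ conjugates $\sigma_{X_1}$ to $\sigma_{X_2}$. Since $R(g)$ commutes with $P(X_1)$ by Theorem~\ref{thm:propertiesdefectgroup}.(a), and the defect group is constant in families, $F_*$ restricts on $P(X_1)$ to the parallel-transport isomorphism $P(X_1)\xrightarrow{\ \sim\ }P(X_2)$; by Proposition~\ref{prop:constantdefect}, applied along each family of the chain and combined with the factorization of $(\sigma_{X_1},\sigma_{X_2})$ above, $F_*$ intertwines the $P$-components of $\sigma_{X_1}$ and $\sigma_{X_2}$. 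For the other factor, $P(X_i)$ acts trivially on $H^2_{X_i}$, so the degree~$2$ component of $\sigma_{X_i}$ lands in $\MT(H^2_{X_i})(\QQ_\ell)=G(H^2_{X_i,\ell})$ by the degree~$2$ Mumford--Tate conjecture; as $F^{(2)}$ is Galois-equivariant, $F^{(2)}_*$ identifies these groups, and therefore (using Lemma~\ref{lem:functoriality} and Remark~\ref{rem:isogenies}) $F_*$ identifies $\MT(H^*_{X_1})\otimes\QQ_\ell$ with $\MT(H^*_{X_2})\otimes\QQ_\ell$, both being the preimage of $\MT(H^2_{X_i})$ in $R(\mathrm{G}_0(X_i))$. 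Arguing as in the proof of Theorem~\ref{thm:main}, it follows that $F_*\circ\sigma_{X_1}$ and $\sigma_{X_2}$ differ by a homomorphism from $\Gal(\bar K/K)$ to the finite central kernel $C$ of $R(\mathrm{G}_0(X_2))\to R^{(2)}(\mathrm{G}_0(X_2))$; the kernel of this homomorphism cuts out the required finite extension $K'/K$.

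I expect this last step to be the main obstacle. In contrast with the conditional Theorem~\ref{thm:main}, one cannot appeal to the full Mumford--Tate conjecture to force $\sigma_{X_1}$ and $\sigma_{X_2}$ to be $F_*$-conjugate; the discrepancy must instead be split into a ``defect-group'' contribution --- handled unconditionally by the deformation-invariance of defect groups, exactly as in Theorem~\ref{thm:mot1} --- and a ``Mumford--Tate'' contribution, for which only the degree~$2$ case of the conjecture (Andr\'e) is available and the residual central ambiguity has to be absorbed by passing to a finite-index subgroup.
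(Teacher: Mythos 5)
Your construction of $F=\Psi\circ R(g)$ and your treatment of the defect-group part coincide with the paper's, but you close the argument by a different route. The paper first uses Andr\'e's results that $\mathcal{H}^2_{X_i}$ is an \emph{abelian} motive satisfying the Mumford--Tate conjecture to promote the Galois-equivariant isometry $f$ to an isomorphism of motives with $\QQ_\ell$-coefficients over a finite extension $K'$; it then reruns the whole proof of Theorem~\ref{thm:mot1} $\ell$-adically to conclude that $F$ itself is the realization of a motivic isomorphism over $K'$, from which Galois-equivariance is automatic. You instead stay at the level of Galois representations: after factoring $(\sigma_{X_1},\sigma_{X_2})$ through $\Gmot\bigl(\mathcal{H}^*_{X_1}\oplus\mathcal{H}^*_{X_2}\bigr)(\QQ_\ell)=\bigl(P\times\MT\bigr)(\QQ_\ell)$, you intertwine the $P$-components via Proposition~\ref{prop:constantdefect} and the $\MT$-components via the central-character argument of Theorem~\ref{thm:main}, absorbing the discrepancy in the kernel $C$ of $R(\mathrm{G}_0(X_2))\to R^{(2)}(\mathrm{G}_0(X_2))$ by a finite extension. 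This works, and in fact your final step needs less than you claim: the degree-$2$ Mumford--Tate conjecture is not really used there (only the $\Gamma$-equivariance of $F^{(2)}=\mathrm{Nm}(g)\cdot f$ and the fact that $P(X_i)$ acts trivially in degree $2$ are needed to see that $F_*\circ\sigma_{X_1}$ and $\sigma_{X_2}$ agree after composing with $\pr_2$); also, your assertion that $\MT(H^*_{X_i})\otimes\QQ_\ell$ is the full preimage of $\MT(H^2_{X_i})\otimes\QQ_\ell$ in $R(\mathrm{G}_0(X_i))$ is unjustified (it may have index $2$), but harmless since only the comparison modulo $C$ matters. The trade-off: your argument is more self-contained on the Galois side, while the paper's yields the strictly stronger conclusion that $F$ is motivated with $\QQ_\ell$-coefficients, not merely Galois-equivariant.
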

\begin{proof}
	By \cite{Andre1996}, the Mumford--Tate conjecture in degree $2$ holds for $X_1$ and~$X_2$, and the motives $\mathcal{H}^2_{X_1}$ and $\mathcal{H}^2_{X_2}$ are abelian. Hence, there exists a finite extension $K'$ of $K$ such that the isometry $f$ is the realization of an isomorphism of motives over $K'$ with $\QQ_{\ell}$-coefficients. The same argument as in the proof of Theorem~\ref{thm:mot1} produces an isomorphism of graded algebras $F\colon H^*_{X_1,\ell}\to H^*_{X_2,\ell}$ which, up to further replacing $K'$ with a finite extension, is the realization of an isomorphism of motives over $K'$ with $\QQ_{\ell}$-coefficients. Hence, $F$ is $\Gal(\bar{K}/K')$-equivariant. 
\end{proof}

\section{Projective families of hyper-K\"{a}hler varieties}\label{sec:ProjFamilies}

In the proof of Theorem \ref{thm:mot} we used the following result.

\begin{theorem}\label{thm:connectingHK}
	Let $X_1$, $X_2$ be deformation equivalent complex projective hyper-K\"{a}hler varieties. Assume that $b_2(X_i)> 6$. Then there exist:
	\begin{itemize}
		\item smooth and projective families $\mathfrak{X}^i\to S_i$ with fibres hyper-K\"{a}hler varieties over connected and non-singular quasi-projective complex varieties $S_i$, for $i=1, \dots, N$;
		\item points $a_i, b_i\in S_i$, for $i=1,\dots, N$,  together with isomorphisms 
		\[
		X_1 \xrightarrow{\ \sim \ } \mathfrak{X}^1_{a_1}, 
		\ \ \ \ \
		\mathfrak{X}^i_{b_i}\xrightarrow{\ \sim \ } \mathfrak{X}^{i+1}_{a_{i+1}}, \ \text{for} \ i=1,\dots ,N-1,
		\ \ \ \ \
		\mathfrak{X}^N_{b_N} \xrightarrow{\ \sim \ } X_2.
		\]
	\end{itemize}
\end{theorem}
If $X_1$ and $X_2$ satisfy the conclusion of the Theorem, we will write $X_1\sim X_2$.

\begin{remark}
	In similar spirit, Soldatenkov \cite{soldatenkov19} shows that, under the assumption that $b_2>3$, the varieties $X_1$ and $X_2$ can be joined via smooth and proper (but not necessarily projective) families over curves; however, the total space of such a family is not an algebraic variety in general.
\end{remark}

\subsection{} Before giving the proof of Theorem \ref{thm:connectingHK} we recall some facts on polarized hyper-K\"{a}hler varieties. 
The positive cone of a hyper-K\"{a}hler manifold~$X$ is the connected component of $\{ x\in H^{1,1}(X,\RR) \text{ such that } (x,x)>0\}$ containing the K\"{a}hler cone, where $(\cdot ,\cdot)$ is the Beauville--Bogomolov pairing. 
We denote by $\mathrm{NS}^+(X)\subset \mathrm{NS}(X)$ the intersection of the positive cone with the N\'eron-Severi group.
In analogy to the case of K3 surfaces and $-2$-classes, the ample cone $\mathrm{Amp}(X)\subset \mathrm{NS}^+(X)$ of a projective hyper-K\"{a}hler manifold can be described in terms of so-called MBM classes~\cite{AV1} (this notion is equivalent to that of wall divisors introduced in \cite{mongardino}). 
The proof of Theorem \ref{thm:connectingHK} relies on the following result due to Amerik and Verbitsky. Let $\mathrm{MBM}(X)\subset \NS(X)$ be the subset of MBM classes on $X$.

\begin{theorem}\phantomsection \label{thm:AV}
	\begin{enumerate}[label=(\roman*)]
		\item \cite{AV1} Let $X$ be a projective hyper-K\"{a}hler manifold. The ample cone of $X$ is one of the connected components of 
		\[
		\mathrm{NS}^+(X)\setminus \bigcup_{z\in\mathrm{MBM}(X)} z^{\bot}.
		\]
		In particular, if $\mathrm{MBM}(X)=\emptyset$ then $\mathrm{Amp}(X)=\mathrm{NS}^+(X)$.
		\item \cite{AV} Fix a deformation class of hyper-K\"{a}hler manifolds with $b_2\geq 5$.
		There exists a positive integer $N$, depending only on the deformation class, such that for any projective~$X$ of the given deformation type, every MBM class $z$ on $X$ satisfies $$-N<(z,z)<0.$$
	\end{enumerate} 
\end{theorem}

\subsection{}
Let $X$ be a hyper-K\"{a}hler manifold and let $\Lambda$ be a lattice isometric to~$H^2(X,\ZZ)$ equipped with the Beauville--Bogomolov form. Let 
$$ 
\Omega = \{ x\in \PP(\Lambda\otimes \CC)\ | \ (x,x)=0, \ (x,\bar{x}) > 0 \}
$$
be the period domain. 
We fix a connected component $\mathfrak{M}$ of the moduli space of~$\Lambda$-marked hyper-K\"{a}hler manifolds containing $X$ (with a chosen marking). By the global Torelli theorem \cite{huybrechts2011global}, \cite{verbitskyTorelli}, the period map $\mathcal{P}\colon \mathfrak{M}\to \Omega$ is surjective with finite fibres, and each fibre consists of bimeromorphic hyper-K\"{a}hler manifolds. 

By Huybrechts' projectivity criterion \cite{Huy99}, a hyper-K\"{a}hler manifold~$Y$ is projective if and only if $\mathrm{NS}(Y)$ contains a class $h$ with $(h,h)>0$.
For any positive class~$h\in\Lambda$, we have a hypersurface 
$$
\Omega_{h^{\bot}}=\{x\in\Omega \ | \ (x,h)=0 \} \subset \Omega.
$$
The period space~$\Omega_{h^{\bot}}$ has two connected components; we denote by $\Omega^+_{h^{\bot}}$ the component parametrizing those $(Y,\tau)\in\mathfrak{M}$ such that~$\tau^{-1}(h)$ belongs to the positive cone.

We let $\mathfrak{M}_{h^{\bot}}^{\mathrm{a}}\subset \mathfrak{M}$ be the subset consisting of $(Y,\tau)$ such that $\tau^{-1}(h)$ represents an ample class on $Y$. Then $\mathfrak{M}_{h^{\bot}}^{\mathrm{a}}$ is Hausdorff and connected, and its image $\Omega_{h^{\bot}}^{\mathrm{a}}$ via the period map is open and dense in $\Omega_{h^{\bot}}^{+}$, by  \cite[Corollary~7.3]{markman2011survey}.

\subsection{}
The rest of the article is devoted to the proof of Theorem \ref{thm:connectingHK}. Let $X$ be a hyper-K\"{a}hler manifold and assume that $b_2(X)>6$. We fix a connected component~$\mathfrak{M}$ of the moduli space of $\Lambda$-marked hyper-K\"{a}hler manifolds.

We will show that given any $(X_1,\tau_1)$ and $(X_2,\tau_2)$ in $\mathfrak{M}$ with $X_1$ and $X_2$ projective, then~$X_1\sim X_2$, in the notation of the Theorem.
We start with the following case.
\begin{proposition}\label{prop:universalPHK}
	Let $h\in\Lambda$ be a positive class, and let $(X_1,\tau_1)$, $(X_2,\tau_2)$ be points of $\mathfrak{M}_{h^{\bot}}^{\mathrm{a}}$. Then~$X_1\sim X_2$.
\end{proposition}
\begin{proof}
	 Let $(Y,L)$ be a hyper-K\"{a}hler variety equipped with an ample divisor $L$. 
	 Following Andr\'e \cite[\S3.3]{Andre1996} (see also \cite{gritsenko2010moduli}) there exists a polarized deformation $\mathfrak{Y}\to S$ of $(Y, L)$ which is a smooth and projective family of hyper-K\"{a}hler varieties over a non-singular and connected quasi-projective variety $S$, with a distinguished fibre $\mathfrak{Y}_s=Y$ and the following property: denoting by~$\tilde{S}\to S$ the universal covering of~$S$, the image of the period map $\tilde{S}\to\Omega^{\mathrm{a}}_{h^{\bot}}$ contains an open subset. Here, $h=\phi(c_1(L))$ for a fixed marking $\phi$ on $Y$.
	 Upon replacing $S$ with a finite cover, we find a torsion free arithmetic subgroup $\Gamma\subset \mathrm{O}(h^{\bot})$ acting freely on $\Omega^+_{h^{\bot}}$ such that the period map descends to $\Psi\colon S\to \Gamma\backslash \Omega_{h^{\bot}}^+$. By \cite{borel}, $ \Gamma\backslash \Omega^+_{h^{\bot}}$ is a non-singular quasi-projective variety, and the map $\Psi$ is a dominant morphism of algebraic varieties.
	
	Let now $(X_1,\tau_1)$ and $(X_2,\tau_2)$ be as in the statement of the proposition. 
	Consider the respective polarized deformations $\mathfrak{X}_1\to S_1$ and $\mathfrak{X}_2\to S_2$ of $(X_1, \tau_{1}^{-1}(h))$ and $(X_2, \tau_{2}^{-1}(h))$ described above. For a suitable torsion free arithmetic subgroup $\Gamma\subset \mathrm{O}(h^{\bot})$ and finite covers of $S_1$ and $S_2$, we have dominant period maps~$\Psi_1\colon S_1\to \Gamma\backslash\Omega^+_{h^{\bot}}$ and $\Psi_2\colon S_2\to \Gamma\backslash\Omega^+_{h^{\bot}}$.
	As $\Omega^+_{h^{\bot}}$ is connected, $\Psi_1(S_1)\cap \Psi_2(S_2)$ is not empty. By the surjectivity of the period map, there exists $(Y,\tau)\in \mathfrak{M}$ whose period gives a point in this intersection via the quotient map $\Omega^+_{h^{\bot}}\to \Gamma\backslash \Omega^+_{h^{\bot}}$; if $(Y,\tau)$ is very general with the above  property, then $Y$ is of Picard rank $1$. 
	In this case $(Y,\tau)$ is the unique point in the fibre of the period map containing it, and it belongs to~$\mathfrak{M}^{\mathrm{a}}_{h^{\bot}}$. It follows that there exist $s_1\in S_1$ and $s_2\in S_2$ and an isomorphism between the fibres $\mathfrak{X}_{1,s_1} \cong Y \cong \mathfrak{X}_{2,s_2}$.
\end{proof}


The next case is the key step in the proof. 

\begin{proposition} \label{prop:firstPart}
	Let $h_1$ and $h_2$ be positive classes in $\Lambda$ such that the lattice~$\langle h_1,h_2\rangle $ is of signature $(1,1)$ and $(h_1,h_2)>0$. Assume that $(X_1,\tau_1) \in \mathfrak{M}_{h_1^{\bot}}^{\mathrm{a}}$ and $(X_2,\tau_2) \in \mathfrak{M}_{h_2^{\bot}}^{\mathrm{a}}$. Then $X_1\sim X_2$. 
\end{proposition}
\begin{proof}	
Let $h_1$ and $h_2$ be as above. Then $\Omega^+_{h_1^{\bot}}\cap \Omega^+_{h_2^{\bot}}\neq \emptyset$. If $\mathfrak{M}_{h_1^\bot}^{\mathrm{a}}\cap \mathfrak{M}_{h_2^{\bot}}^{\mathrm{a}}$ is also not empty, we can directly apply Proposition \ref{prop:universalPHK} to conclude; however in general this is not the case, and we need to modify $h_1$ and $h_2$ before applying that proposition.

We fix the constant~$N$ given by Theorem \ref{thm:AV} for our deformation type. First of all, we replace $h_1$ and $h_2$ with classes of which we can control the square. There exist $(Y_1, \phi_1) \in \mathfrak{M}_{h_1^{\bot}}^{\mathrm{a}}$ and $(Y_2, \phi_2)\in \mathfrak{M}_{h_2^{\bot}}^{\mathrm{a}}$ of maximal Picard rank. Then $\NS(Y_i)$ is an indefinite lattice of rank $b_2-2\geq 5$, and hence it contains a non-zero isotropic vector~$\phi_i^{-1}(y_i) \in \NS(Y_i)$, for $i=1,2$. 

\begin{lemma} 
	\label{claim:firstMove}
	There exist a prime number $p>N$ congruent to $3$ modulo $4$, an odd integer $j\gg 0$ and positive classes $l_1$ and $l_2$ in $\Lambda$ such that:
	\begin{itemize}
		\item $\phi_1^{-1}(l_1) $ (resp.~$\phi_2^{-1}(l_2) $) represents an ample divisor on $Y_1$ (resp.~$Y_2$);
		\item  $(l_1, l_1)=p^j f_1$ and $(l_2, l_2)=p^j f_2$, with $f_1$ and $f_2$ not divisible by $p$ and such that $f_1$ and $f_2$ are both quadratic residues modulo $p$;
		\item the lattice generated by $l_1$ and $l_2$ has signature $(1,1)$, and $(l_1, l_2)>0$.
	\end{itemize}
\end{lemma}
\begin{proof}
	Choose a large prime $p>N$ congruent to $3$ modulo $4$ and which does not divide any of the integers $(h_1, h_1)$, $(h_2, h_2)$, $(h_1, y_1)$ and $(h_2, y_2)$. Replacing $y_i$ with a suitable multiple, we may assume that $2(h_1, y_1)$ and $2(h_2, y_2)$ are (non-zero) quadratic residues modulo~$p$.
	Consider
	\[
	l_1\coloneqq p^jh_1 + y_2 \ \ \ \text{and} \ \ \ l_2\coloneqq p^jh_2 + y_2.
	\]
	Then we have
	\begin{align*}
	(l_1, l_1)& =p^j \Bigl(p^j(h_1, h_1) + 2 (h_1, y_1)\Bigr)\eqqcolon p^jf_1, \\
	  (l_2, l_2)& =p^j\Bigl(p^j(h_2,h_2) +  2 (h_2, y_2)\Bigr)\eqqcolon p^jf_2.
	\end{align*}
	For a large enough odd integer $j$, the class $\phi_1^{-1}(l_1) $ (resp.~$\phi_2^{-1}(l_2) $) represents an ample divisor on $Y_1$ (resp.~$Y_2$). Moreover $\langle l_1, l_2\rangle $ has signature $(1,1)$ and $(l_1, l_2)>0$. Therefore $l_1$ and $l_2$ have the required properties.
\end{proof}

Thanks to Proposition \ref{prop:universalPHK} it suffices to prove Proposition \ref{prop:firstPart} for $l_i, Y_i, \phi_i$ as above in place of $h_i, X_i, \tau_i$.

\begin{lemma}\label{claim:keyLemma}
	There exists vectors $v_1\in l_1^{\bot}$ and $v_2\in l_2^{\bot}$ such that:
	\begin{itemize}
		\item $(v_1,v_1)= p^{r_1}\epsilon_1$, where $r_1$ is an odd natural number and $\epsilon_1$ is a negative integer not divisible by $p$ and it is a quadratic residue modulo $p$;
		\item $(v_2,v_2)= p^{r_2}\epsilon_2$, where $r_2$ is an odd natural number and $\epsilon_2$ is a negative integer not divisible by $p$ and it is a quadratic residue modulo $p$;
		\item $(v_1,v_2)=0 = (l_2,v_1)$.
	\end{itemize}	
\end{lemma} 
\begin{proof}
	The orthogonal to $\langle l_1, l_2 \rangle$ in $\Lambda$ is an indefinite lattice of rank at least $5$; hence, we can find isotropic vectors $u_1$, $u_2\in\langle l_1, l_2 \rangle^{\bot}$ such that $(u_1,u_2)= t < 0$. The self-intersection of a linear combination $au_1+bu_2$ is $2abt$. It is then clear that a suitable linear combination $v_1$ of $u_1$ and $u_2$ will satisfy $(v_1,v_1)=p^{r_1}\epsilon_1$ for some odd natural number $r_1$ and a negative integer $\epsilon_1$ which is a non-zero square modulo $p$.
	
	The orthogonal to $\langle l_2, v_1\rangle$ is an indefinite lattice of rank at least $5$; hence, it contains isotropic vectors $z_1$, $z_2\in\langle l_2, v_1 \rangle^{\bot}$ such that $(z_1, z_2) = s < 0 $. As above, a suitable linear combination $v_2$ of $z_1$ and $z_2$ satisfies $(v_2,v_2)=p^{r_2}\epsilon_2$, for some odd natural number~$r_2$ and a negative integer $\epsilon_2$ which is a non-zero square modulo $p$.
\end{proof}

Consider now the rank $2$ sub-lattices of $\Lambda$:
$$L_1=\langle l_1, v_1 \rangle \ \ \ \text{and} \ \ \ L_2=\langle l_2, v_2\rangle.$$ 
The proof of the next Claim \ref{claim:lemma1} and Claim \ref{claim:lemma2} below will be given in \S\ref{subsec:technical}.

\begin{claim} \label{claim:lemma1}
	For $i=1$ or $2$, let $v\in L_i\otimes \QQ$. If $(v,v)\in \ZZ$, then $p $ divides $(v,v)$.
\end{claim} 

Since the lattices $L_1$ and $L_2$ are of signature~$(1,1)$, 
there exist $(W_1,\psi_1)$ and $(W_2,\psi_2)$ in~$\mathfrak{M}$ such that~$\mathrm{NS}(W_1)=\psi^{-1}_1(L_1)$ and~$\mathrm{NS}(W_2)=\psi^{-1}_2(L_2)$ and $\psi_1^{-1}(l_1)$ (resp.\ $\psi_2^{-1}(l_2)$) belongs to $\NS^+(W_1)$ (resp.~$\NS^+(W_2)$). Claim \ref{claim:lemma1} ensures that~$\mathrm{NS}(W_1)$ and~$\NS(W_2)$ contain no MBM classes; hence, by Theorem \ref{thm:AV},
$$\mathrm{Amp}(W_1)=\mathrm{NS}^+(W_1) \ \ \ \text{and} \ \ \ \mathrm{Amp}(W_2)=\mathrm{NS}^+(W_2).$$
In particular, $\psi_1^{-1}(l_1)$ (resp.~$\psi_2^{-1}(l_2)$) represents an ample class on $W_1$ (resp.~$W_2$).

For $k>0$, we define $w_{1,k} \in L_1$ and $w_{2,k}\in L_2$ as:
\[
w_{1,k}\coloneqq p^k l_1+v_1 \ \ \ \ \ \ w_{2,k}\coloneqq p^k l_2+v_2.
\]
For $k$ large enough, $\psi_1^{-1}(w_{1,k})$ (resp.~$\psi_2^{-1}(w_{2,k})$) represents an ample class on $W_1$ (resp.~on $W_2$). 
We let $S_k\subset \Lambda$ be the lattice generated by $w_{1,k}$ and $w_{2,k}$. 

\begin{claim}\label{claim:lemma2}
	Let $v\in S_k\otimes \QQ$, for $k\gg 0$. If $(v,v)\in \ZZ$, then $p$ divides $(v,v)$.
\end{claim}

If $k\gg 0$, the lattice $S_k$ has signature $(1,1)$, because this is the signature of the lattice generated by $l_1$ and $l_2$.
By the surjectivity of the period map, there exists $(Z,\eta)\in\mathfrak{M}$ such that $\mathrm{NS}(Z)=\eta^{-1}(S_k)$ and both the classes~$\eta^{-1}(w_{1,k})$ and $\eta^{-1}(w_{2,k})$ belong to $\NS^+(Z)$; this is possible because $(w_{1,k},w_{2,k})>0$ for $k\gg 0$.

By Claim~\ref{claim:lemma2}, there are no MBM classes in~$\mathrm{NS}(Z)$; hence, $\mathrm{Amp}(Z)=\mathrm{NS}^+(Z)$, by Theorem \ref{thm:AV}.
We therefore obtain:
\begin{equation*}
     (W_1,\psi_1) \in\mathfrak{M}_{l_1^{\bot}}^{\mathrm{a}}\cap \mathfrak{M}_{w_{1,k}^{\bot}}^{\mathrm{a}}, \ \ \
	(Z,\eta)  \in\mathfrak{M}_{w_{1,k}^{\bot}}^{\mathrm{a}}\cap \mathfrak{M}_{w_{2,k}^{\bot}}^{\mathrm{a}}, \ \ \
	(W_2 ,\psi_2) \in \mathfrak{M}_{w_{2,k}^{\bot}}^{\mathrm{a}}\cap \mathfrak{M}_{l_2^{\bot}}^{\mathrm{a}}.
\end{equation*}
The proposition now follows from Proposition \ref{prop:universalPHK}.
\end{proof}

\subsection{} Finally, Proposition \ref{prop:firstPart} implies Theorem \ref{thm:connectingHK} via the next lemma.

\begin{lemma}
	Let $h_1, h_2$ be linearly independent positive classes in $\Lambda$. Then there exists finitely many vectors~$l_1, l_2, \dots , l_k\in \Lambda$ such that:
	\begin{itemize}
		\item $l_1=h_1$ and $l_k=h_2$;
		\item $(l_i, l_i)>0$, for each $i=1,\dots, k$;
		\item $\langle l_i, l_{i+1}\rangle$ has signature $(1,1)$ and $(l_i, l_{i+1})>0$, for each $i=1,\dots ,k-1$.
	\end{itemize}
\end{lemma}
\begin{proof}
	The argument presented here is due to Soldatenkov~\cite[\S6.3]{soldatenkov19}.
	If $\langle h_1, h_2\rangle$ is of signature $(1,1)$ and $(h_1, h_2)>0$ there is nothing to do. 
	
	Assume that $\langle h_1, h_2\rangle$ is positive definite. 
	We may assume that $(h_1, h_2)=0$, for, otherwise, we pick a positive class $h_3\in \langle h_1, h_2\rangle^{\bot}$ and apply the argument below to~$h_1, h_3$ and $h_3,h_2$.
	Consider $V$ consisting of $l \in \Lambda\otimes \RR$ such that:
	\begin{itemize}
		\item  $(l,l)>0$, $(h_1, l)>0$ and $(h_2,l)>0$;		
		\item $\langle h_1, l\rangle$ and $\langle l, h_2\rangle$ are both of signature~$(1,1)$.
	\end{itemize} 
	Then $V$ is an open cone in $\Lambda\otimes\RR$ and it suffices to show that $V$ is not empty. Let~$w\in\langle h_1, h_2 \rangle^{\bot}$ be such that $(w,w)<0$ and let~$u_1, u_2, u_3$ be the orthogonal basis of $\langle h_1, h_2, w\rangle\otimes \RR$ such that 
	$(u_1, u_1)=1$, $ (u_2, u_2)=1$, $(u_3, u_3)=-1$, and
	$h_1=\alpha u_1$, $h_2=\beta u_2$, $w=\gamma u_3$, for positive real numbers $\alpha$, $\beta$ and $\gamma$.
	A computation shows that the vector $l=u_1+ u_2+\delta u_3$ is positive for~$\delta^2<2$, and both the real vector spaces $\langle h_1, l\rangle$ and $\langle h_2, l \rangle$ are of signature~$(1,1)$ for $\delta^2>1$. Moreover, $(h_1, l)=\alpha$ and~$(h_2, l)=\beta$ are positive. Hence, if $1<\delta^2<2$, the vector $l\in V$. 
	
	If $\langle h_1, h_2\rangle$ is of signature $(1,1)$ and $(h_1, h_2)<0$, we simply let~$l$ be a positive class in $\langle h_1, h_2 \rangle^{\bot}$. Then $\langle h_1, l\rangle$ and $\langle h_2, l\rangle$ are positive definite, and we conclude as above.
	
	Finally, let $\langle h_1, h_2 \rangle$ be degenerate.  
	The set $V$ of $l\in\Lambda\otimes \RR$ such that 
	\begin{itemize}
		\item $(l,l)>0$, and
		\item $\langle h_1, l\rangle $ and $\langle l, h_2 \rangle$ are both non-degenerate,
	\end{itemize} 
	is a non-empty open cone in $\Lambda\otimes \RR$. Hence, there exists a positive class $l\in \Lambda$ such that $\langle h_1, l\rangle$ and $\langle l, h_2 \rangle$ are both non-degenerate. This concludes the proof.
\end{proof}

\subsection{} \label{subsec:technical}
Below are proven the statements used in the proof of Proposition~\ref{prop:firstPart}.

\begin{proof}[Proof of Claim \ref{claim:lemma1}]
	By construction, the intersection matrices of $L_1$ and $L_2$ are
	\[
	\begin{pmatrix}
		p^{j}f_1 & 0 \\
		0 & p^{r_1}\epsilon_1
	\end{pmatrix} \ \ \ \text{and} \ \ \
	\begin{pmatrix}
		p^{j}f_2 & 0 \\
		0 & p^{r_2}\epsilon_2
	\end{pmatrix},
	\]
	respectively, with $j\gg 0$ odd and $f_1, f_2, \epsilon_1, \epsilon_2$ all non-zero squares modulo $p$. In particular, $\epsilon_1f_1$ and~$\epsilon_2f_2$ are squares modulo $p$.
	
	Let $v\in L_1\otimes \QQ$; the case of $v\in L_2\otimes \QQ$ is analogous. Assume that $(v,v)$ is an integer. There exist integers $\gamma$, $\lambda$, $\delta$ such that $\gamma v =\lambda l_1 + \delta v_1$. Assume that $j\geq r_1$ (otherwise we proceed similarly). 
	We then have 
	\begin{align*}
		\gamma^2 (v,v) & = p^{r_1}( \lambda^2 p^{j-r_1}f_1 + \delta^2\epsilon_1).	
	\end{align*}
	
	Assume by contradiction that $(v,v)$ is not divisible by $p$, and let $m$ be the biggest integer such that $p^m $ divides both $\lambda$ and $\delta$. 
	We can then write 
	\[
	\gamma^2 (v,v)=p^{2m + r_1}(\lambda_0^2 p^{j - r_1}f_1 + \delta_0^2\epsilon_1),
	\]
	where $p$ does not divide both $\lambda_0$ and $\delta_0$.
	The left hand-side is divisible by an even power of $p$. Since $r_1$ is odd, if $j-r_1>0$ this forces $\delta_0$ to be divisible by $p$, and hence $\delta_0= p\delta_1$ for some integer~$\delta_1$. Therefore, $\lambda_0$ is not divisible by $p$.
	We obtain
	\[
	\gamma^2 (v,v)=p^{2m + r_1 + 2}(\lambda_0^2 p^{j-r_1-2}f_1 + \delta_1^2\epsilon_1).
	\]
	If $j-r_1-2>0$, we again find that $p$ has to divide $ \delta_1$, so $\delta_0= p^2\delta_2$ and 
	\[
	\gamma^2 (v,v)=p^{2m+r_1+4}(\lambda_0^2 p^{j-r_1-4}f_1 + \delta_2^2\epsilon_1).
	\] 
	Proceeding in this way we eventually find $\bar{\delta}$ such that $\delta_0=p^{(j-r_1)/2}\bar{\delta}$ and
	\[
	\gamma^2 (v,v)=p^{2m + j}(\lambda_0^2 f_1 + \bar{\delta}^2\epsilon_1).
	\]
	Now, since $j$ is odd, $p$ has to divide $\lambda_0^2 f_1 + \bar{\delta}^2\epsilon_1$. But $-f_1\epsilon_1$ is not a square modulo $p$ because $p\equiv 3$ modulo $4$, and therefore $\lambda_0^2 f_1 + \bar{\delta}^2\epsilon_1\equiv 0$ modulo $p$ has no non-trivial solutions. 
	It follows that $p$ divides $\lambda_0$, a contradiction.
\end{proof}

\begin{proof}[Proof of Claim \ref{claim:lemma2}]
	We let $b\coloneqq (l_1, l_2)$ and $s\coloneqq (l_1, v_2)$. The intersection matrix of $S_k$ is 
	\[
	\begin{pmatrix}
		p^{2k+j}f_1+p^{r_1}\epsilon_1 & p^{2k}b + p^k s \\
		p^{2k}b +p^{k}s   & p^{2k+j}f_2 + p^{r_2}\epsilon_2
	\end{pmatrix}.
	\]
	
	Let $v\in S_k\otimes\QQ$ be such that $(v,v)\in \ZZ$. We have $\gamma v=\lambda w_{1,k} + \delta w_{2,k}$ for integers $\gamma, \lambda, \delta$. Without loss of generality, we may assume that $r_1\leq r_2$.~Then:
	\begin{align*}
		\gamma^2 (v,v) = p^{r_1} \Bigl(\lambda^2\epsilon_1+ \delta^2p^{r_2-r_1} \epsilon_2 + D\Bigr), 
	\end{align*}
where 
$$ D = \lambda^2 p^{2k+j-r_1}f_1 + 2\lambda\delta (p^{2k-r_1} b+ p^{k-r_1} s) +\delta^2p^{2k+j-r_1}f_2$$
	is divisible by a large power of $p$, since $k\gg 0$. 
	Let $m$ be such that $\lambda=p^m\lambda_0$ and $\delta=p^m\delta_0$, with at least one among $\lambda_0$ and $\delta_0$ not divisible by $p$. We can then write
	\[
	\gamma^2(v,v)  = p^{2m+r_1}(\lambda_0^2 \epsilon_1 + \delta_0^2 p^{r_2-r_1} \epsilon_2 + D_0).
	\]
	
	Assume by contradiction that $p$ does not divide $(v,v)$. Then $\gamma^2(v,v)$ is divisible by an even power of $p$, and hence, being $r_1$ odd, $p$ necessarily divides the term in brackets on the right hand side. 
    If $r_2>r_1$ this forces $\lambda_0$ to be divisible by $p$, so $\lambda_0=p\lambda_1$. 
    Proceeding as in the previous proof, we obtain $\bar{\lambda}$ such that $\lambda_0=p^{(r_2-r_1)/2}\bar{\lambda}$ and
    \[
    \gamma^2(v,v)  = p^{2m+r_2}(\bar{\lambda}^2 \epsilon_1 + \delta_0^2 \epsilon_2 + D_0/{p^{r_2-r_1}}),
    \]
     
    Note that $D_0/p^{r_2-r_1}$ is still an integer divisible by $p$. But then we must have 
	$$\bar{\lambda}^2 \epsilon_1 + \delta_0^2 \epsilon_2\equiv 0 \ \text{modulo} \ p.$$ Since $\epsilon_1\epsilon_2$ is a square modulo $p$ and $p\equiv 3$ modulo $4$, this equation has no non-trivial solutions; hence $\bar{\lambda}\equiv \delta_0\equiv 0 $ modulo $p$, which is a contradiction.
\end{proof}

\bibliographystyle{smfalpha}
\bibliography{bibliography}{}

\end{document}